\title{Computing a Link Diagram from its Exterior}
\author{Nathan M. Dunfield}
{Dept.~of Math., University of Illinois at Urbana-Champaign, USA,
\url{https://dunfield.info}}
{nathan@dunfield.info}
{https://orcid.org/0000-0002-9152-6598}
{Partially supported by US National Science Foundation grants
  DMS-1510204 and DMS-1811156 and by a Simons Fellowship.}
\author{Malik Obeidin}
{Google, Inc., USA}
{obeidinm@gmail.com}
{}
{Partially supported by US National Science Foundation grants
  DMS-1510204 and DMS-181115.}
\author{Cameron Gates Rudd}
{Max-Planck-Institut f\"ur Mathematik, Bonn, Germany,
\url{https://www.camrudd.page}}
{cameron.rudd@gmail.com}
{https://orcid.org/0000-0001-6065-1110}
{Partially supported by US National Science Foundation grant DMS-1811156.}
\authorrunning{N. Dunfield, M. Obeidin, and C. Rudd}
\keywords{computational topology, low-dimensional topology, knot, knot
  exterior, knot diagram, link, link exterior, link diagram}
\definecolor{nmdlight}{cmyk}{0.128, 0.176, 0.0, 0.0}
\definecolor{nmdmedium}{cmyk}{0.41, 0.49, 0.06, 0.0}
\definecolor{nmddark}{rgb}{0.1, 0.0, 0.5}
\definecolor{cameroncomment}{rgb}{0.067, 0.412, 0.067}
\newcommand{\FindDiagram}{\textsc{Find Diagram}}
\newtheorem*{finddiag}{\FindDiagram}
\newtheorem{algothm}[theorem]{Algorithm}
\theoremstyle{remark}
\newtheorem*{inputdata}{Input}
\definecolor{mplone}{HTML}{1f77b4}
\definecolor{mpltwo}{rgb}{0.99, 0.78, 0.07}
\definecolor{mplthree}{HTML}{2ca02c}
\newcommand{\SKCKlegend}[1]{%
  \begin{scope}[shift={#1}]
    \draw[color=mplone!70, fill] (0, 3) circle (0.7)
    node[right=1.3, color=black] {$\SK$};
    \draw[color=mpltwo!70, fill] (0, 0) circle (0.7)
    node[right=1.3, color=black] {$\CK$};
  \end{scope}
}
\newcommand{\hyp}{\nobreakdash-\hspace{0pt}}
\newcommand{\3}[1]{3\hyp}
\newcommand{\2}[1]{2\hyp}
\newcommand{\1}[1]{1\hyp}
\newcommand{\eK}{E(K)}
\newcommand{\zK}{Z(K)}
\newcommand{\eKi}{E(K_i)}
\newcommand{\cT}{{\mathcal T}}
\newcommand{\cS}{{\mathcal S}}
\newcommand{\cK}{{\mathcal K}}
\newcommand{\cC}{\mathcal C}
\newcommand{\Tdot}{\mathring{\cT}}
\newcommand{\Mdot}{\mathring{M}}
\newcommand{\CK}{\cC\cK}
\newcommand{\SK}{\cS\cK}
\newcommand{\assign}{:=}
\newcommand{\PSL}[2]{\mathrm{PSL}_{#1} #2}
\newcommand{\SL}[2]{\mathrm{ SL}_{#1} #2}
\newcommand{\Z}{{\mathbb Z}}
\newcommand{\R}{{\mathbb R}}
\newcommand{\Q}{{\mathbb Q}}
\newcommand{\pairm}[2]{{#1\langle #2 #1\rangle}}
\newcommand{\NP}{\mathbf{NP}}
\newcommand{\coNP}{\mbox{\textbf{co-NP}}}
\renewcommand{\H}{{\mathbb{H}}}
\tikzset{%
  nmdstd/.style={%
    line join=round,
    line cap=round,
    font=\footnotesize,
    >={Computer Modern Rightarrow[length=1pt 5, width'=0pt 1]},
  }
}
\def\tikzoverlay{%
    \pgfutil@ifnextchar[{\tikzoverlay@opt}{\tikzoveraly@opt[]}%
}
\def\tikzoverlay@opt[#1]#2{%
    \begin{tikzpicture}
        \node[anchor=south west, inner sep=0] (image) at (0,0) {\includegraphics[#1]{#2}};
        \newdimen\nmd@tikzoverlaywidth
        \pgfextractx{\nmd@tikzoverlaywidth}{\pgfpointanchor{image}{south east}}
        \begin{scope}[nmdstd]
          \tikzset{x=0.01\nmd@tikzoverlaywidth}
          \tikzset{y=0.01\nmd@tikzoverlaywidth}
}
\def\endtikzoverlay{%
    \end{scope}
    \end{tikzpicture}
}
\begin{document}

\maketitle

\begin{abstract}
  A knot is a circle piecewise-linearly embedded into the 3-sphere.  The
  topology of a knot is intimately related to that of its exterior,
  which is the complement of an open regular neighborhood of the knot.
  Knots are typically encoded by planar diagrams, whereas their
  exteriors, which are compact 3-manifolds with torus boundary, are
  encoded by triangulations.  Here, we give the first practical
  algorithm for finding a diagram of a knot given a triangulation of
  its exterior.  Our method applies to links as well as knots, and
  allows us to recover links with hundreds of crossings.  We use it to
  find the first diagrams known for 23 principal congruence arithmetic
  link exteriors; the largest has over 2,500 crossings.  Other
  applications include finding pairs of knots with the same 0-surgery,
  which relates to questions about slice knots and the smooth 4D
  Poincar\'e conjecture.
\end{abstract}

\begin{figure}[bth]
  \centering
  \includegraphics[angle=90, width=0.4\textwidth]{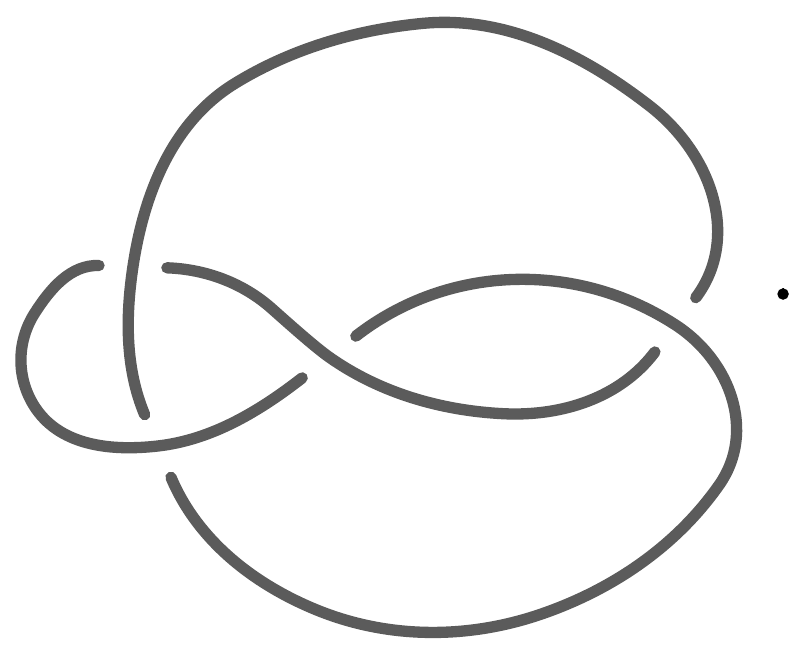}
  \caption{A planar diagram for a knot can be viewed as a 4-valent
    graph (the ``shadow'' of the above figure) with a planar embedding
    where every vertex represents a \emph{crossing}, a place where one
    part of the knot crosses in front of the other in 3D. }
  \label{fig:diagram}
\end{figure}


\section{Introduction}
\label{sec: intro}

A knot is a piecewise-linear (PL) embedding of a circle $S^1$ into the
3-sphere $S^3$.  The study of knots goes back to the 19th century, and
today is a central focus of low-dimensional topology, with
applications to chemistry \cite{Flapan2000}, biology
\cite{FlapanHeWong2019}, engineering \cite{Peddada2021}, and
theoretical computer science \cite{deMesmayRieckSedgewick2021}.  Two
knots are topologically equivalent when they are isotopic, that is,
when one can be continuously deformed to the other without passing
through itself.  Computationally, knots are typically encoded as
planar diagrams (Figure~\ref{fig:diagram}); there are more than 350
million distinct knots with diagrams of at most 19 crossings
as enumerated by \cite{Burton2020}.

The topology of knots is intimately related to that of their
exteriors, where the \emph{exterior} of a knot $K$ is the compact
3-manifold with torus boundary $\eK \assign S^3 \setminus N(K)$, where
$N(K)$ is an open tubular neighborhood of $K$.  Indeed, the
orientation-preserving homeomorphism type of the exterior $\eK$
determines the knot $K$ \cite{GordonLuecke1989}.  Many algorithms for
knots work via their exteriors, starting with Haken's foundational
method for deciding when a knot is equivalent to the unknot
\cite{Haken1961}.  Consequently, the problem of going from a diagram
$D$ of $K$ to a triangulation of $\eK$ is well-studied \cite[\S
7]{HassLagariasPippenger1999}; for ideal triangulations (see
Section~\ref{sec: tri} below), one needs only four tetrahedra per
crossing of $D$ \cite[\S 3]{Weeks2005}.  Here, we study the inverse
problem:

\begin{figure}[p]
  \centering
  \includegraphics[height=0.95\textwidth, angle=90]%
  {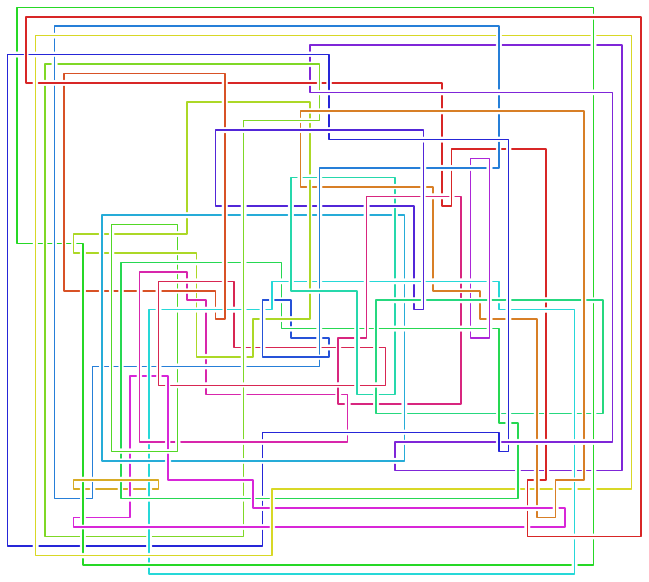}
  \caption{The first known diagram of a link whose exterior is
    $\Mdot = \H^3/\Gamma(I)$ where $\Gamma(I)$ is the principal
    congruence subgroup of $\PSL{2}{\Z[\frac{1 + \sqrt{15}i}{2}]}$ of
    level $I = \pairm{\big}{6, \frac{-3 + \sqrt{15}i}{2}}$ from
    \cite{BakerGoernerReid2019a}; it has 24 components and 294
    crossings. The input ideal triangulation $\Tdot$ for $\Mdot$ had
    249 tetrahedra.  Since the hyperbolic volume of
    $\Mdot \approx 225.98$, any diagram must have at
    least 66 crossings by \cite[Theorem~5.1]{Adams2013}.}
  \label{fig: cong 1}
\end{figure}

\begin{figure}[p]
  \centering
  \includegraphics[width=0.95\textwidth]%
  {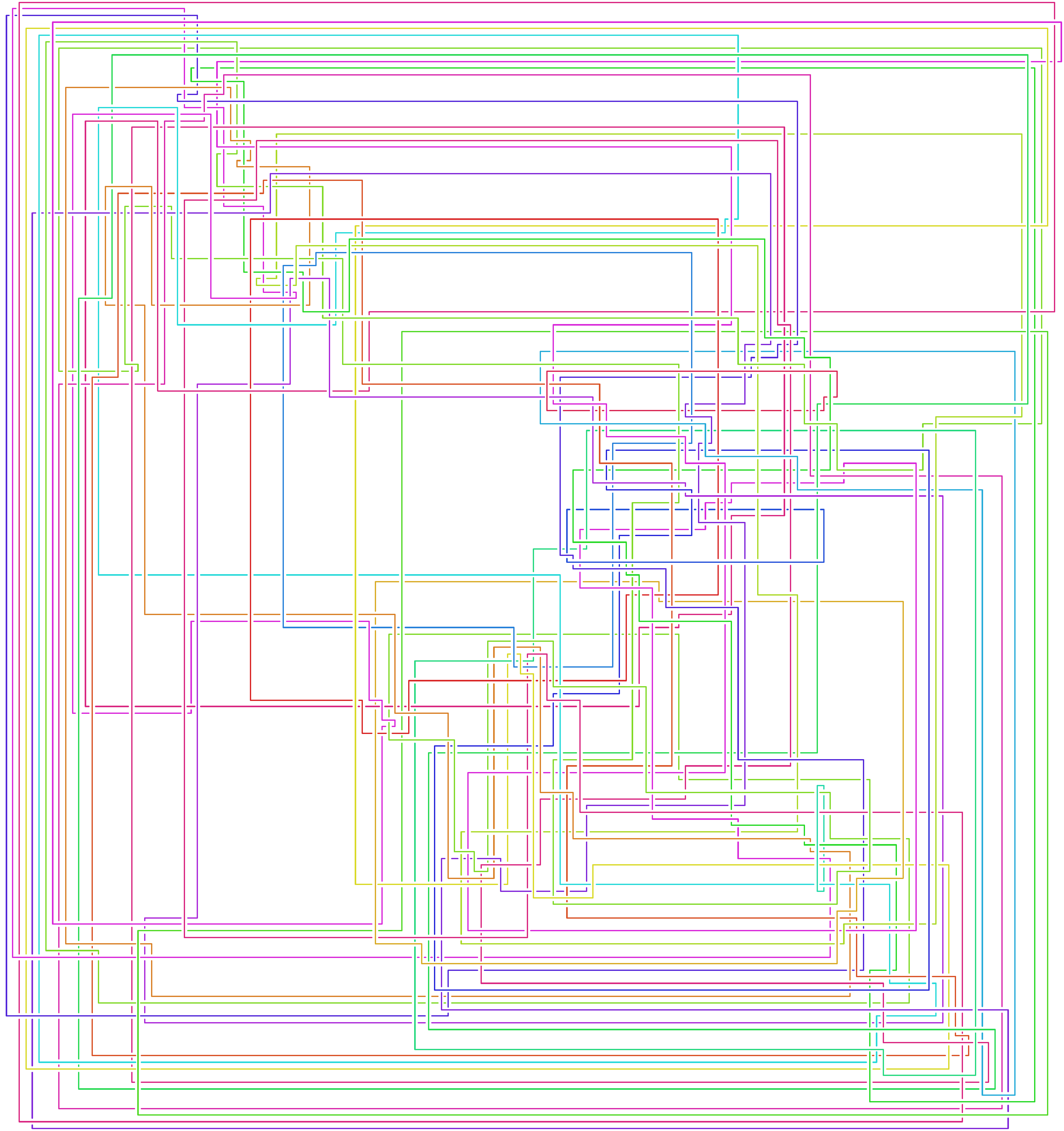}
  \caption{The first known diagram of a link whose exterior is
    $\Mdot = \H^3/\Gamma(I)$ where $\Gamma(I)$ is the principal
    congruence subgroup of $\PSL{2}{\Z[\frac{1 + \sqrt{15}i}{2}]}$ of
    level $I = \pairm{\big}{5, \frac{5 + \sqrt{15}i}{2}}$ from
    \cite{BakerGoernerReid2019a}; it has 24 components and 1,092
    crossings. The input ideal triangulation $\Tdot$ for $\Mdot$ had
    211 tetrahedra.  Since the hyperbolic volume of
    $\Mdot \approx 188.32$, any diagram must have at least 56
    crossings \cite[Theorem~5.1]{Adams2013}.}
  \label{fig: cong 2}
\end{figure}

\begin{finddiag}
Input a triangulation $\cT$ of a knot exterior $\eK$, output a
diagram of $K$.
\end{finddiag}

If the input triangulation $\cT$ is guaranteed to be that of a knot
exterior (in fact, this is decidable by Algorithm S of
\cite{JacoSedgwick2003}), then a useless algorithm to find $D$ is just
this: start generating all knot diagrams, triangulate each exterior,
and then do Pachner moves (see Section~\ref{sec: Pachner}) on these
triangulations.  Since any two triangulations of a compact 3-manifold
are connected by a sequence of such moves, one eventually stumbles
across $\cT$, thus finding a diagram for the underlying knot.  We do
not explore the computational complexity of \FindDiagram\, beyond
showing it is at least exponential space in Theorem~\ref{thm: fib},
but rather give the first algorithm that is highly effective in
practice.  We work more generally with links, where a \emph{link} is a
disjoint union of knots.  While a link exterior does not uniquely
determine a link \cite[Figure 9.28]{Adams1994}, this indeterminacy is
removed by specifying meridional curves for the link, see Section
\ref{sec: dehn}; hence we require such curves as part of the input in
Section~\ref{sec: outline}. Figures~\ref{fig: cong 1} and \ref{fig:
  cong 2} show diagrams that were found by our method; these are the
first known diagrams of these particular link exteriors, see
Section~\ref{sec: cong}.

\subsection{Prior work}
\label{sec: priors}

In general, an efficient algorithmic solution to the homeomorphism
problem has not been implemented, and would be quite complicated; see
\cite{Kuperberg2019}.  However, when the interior of $\eK$ has a
complete hyperbolic structure, in short is \emph{hyperbolic}, the
homeomorphism problem can be quickly solved in practice using
hyperbolic geometry, even for triangulations with 1,000 tetrahedra
\cite{Weeks1993}. This case is in practice generic for prime knots;
for example, 99.999\% of the knots in \cite{Burton2020} are
hyperbolic.  This allows a table lookup method for \FindDiagram\ when
$K$ is small enough; one uses hyperbolic and homological invariants to
form a hash of $\eK$, queries a database of knots to get a handful of
possible $K_i$, and then checks if any $\eKi$ is homeomorphic to
$\eK$. This technique is used by the \texttt{identify} method of
\cite{SnapPy}, but is hopeless for something like Figure~\ref{fig:
  cong 1}, as the number of links of that size exceeds the number of
atoms in the visible universe \cite{SundbergThistlethwaite1998}.

A related approach was used in \cite{ChampanerkarKofmanMullen2014,
  BakerKegel2021} to find knot diagrams for all 1,267 knots where
$\eK$ is hyperbolic and can be triangulated with at most 9 ideal
tetrahedra \cite{Burton2014, Dunfield2020}. While knots with few
crossings have simple exteriors, the converse is not the case, and the
simplest known diagrams for about 25\% of these knots have 100--300
crossings.  However, these knots either fall into very special
families which can be tabulated to a large number of crossings, or one
can drill out additional curves to get a link exterior that
appears in an existing table and has special properties allowing the
recovery of a diagram of the knot itself.

The exteriors of the special class of alternating knots have nice
topological characterizations given in \cite{Greene2017} and
\cite{Howie2017}. Using these characterizations, Howie
\cite{Howie2017} and independently Juh\'asz and Lackenby in an
appendix to \cite{Greene2017} describe normal surface theory
algorithms for determining whether a 3-manifold $\eK$ is the exterior
of an alternating knot. The certificate that $\eK$ is the exterior of
an alternating knot can then be used to produce an alternating knot
diagram of $K$; see \cite[page 2370]{Howie2017}.

There are other ad hoc methods in the literature, see
e.g.~\cite{BakerGoernerReid2019b} and references therein, but this
paper is the first to give a generically applicable method for
\FindDiagram.

\subsection{Outline of the algorithm}
\label{sec: outline}

As Figures~\ref{fig: cong 1} and \ref{fig: cong 2} show, our method
can solve \FindDiagram\ in cases where any diagram for the link has 66
and 55 or more crossings respectively. It also easily handles any
example covered by one of the techniques discussed in
Section~\ref{sec: priors}, and more applications are given in
Sections~\ref{sec: impl} and \ref{sec: apps}. Experimental mean
running time was $O(1.07^n)$, where $n$ is the number of tetrahedra in
the input ideal triangulation, see Figure~\ref{fig: running}.  With
the definitions of Section~\ref{sec: background}, the input for our
algorithm is:

\begin{inputdata}
  \label{algo input}
  \begin{enumerate}[a.]
  \item
    \label{item: ideal input}

    An ideal triangulation $\Tdot$ of a compact \3-manifold $\Mdot$
    with toroidal boundary, with an essential simple closed curve
    $\alpha_i$ for each boundary component of $\Mdot$.

  \item
    \label{item: input moves}

    A sequence $(P_i)$ of Pachner moves transforming the layered
    filling triangulation $\cT$ of the manifold
    $M = \Mdot(\alpha_1, \ldots, \alpha_k)$ into a specific
    \2-tetrahedra \emph{base triangulation} $\cT_0$ of $S^3$.
  \end{enumerate}
\end{inputdata}

One might object that (\ref{item: input moves}) is effectively
cheating, since no polynomial-time algorithm for finding $(P_i)$ is
known, or indeed for deciding if $M$ is $S^3$. Using the estimates in
\cite{Mijatovic2003}, one can perform a naive search to find some
$(P_i)$, but the complexity of this is
super-exponential. However, recognizing $S^3$ by finding such moves is
easy in practice, see Section~\ref{sec: cert}, with the length of
$(P_i)$ linear in the size of $\cT$ as per Figure~\ref{fig: expanded
  moves}.  The output of the algorithm is a knot diagram $D$, encoded
as a planar graph with over/under crossing data for the vertices.

The main data structure is a triangulation $\cT$ of $S^3$ with a PL
link $L$ that is disjoint from the 1-skeleton.  The link $L$ is
encoded as a sequence of line segments, each contained in a single
tetrahedron of $\cT$, with endpoints recorded in barycentric
coordinates.  An initial pair $(\cT, L)$ in (\ref{item: input moves})
is constructed from input (\ref{item: ideal input}) as described in
Section~\ref{sec: layered tri}.  The algorithm proceeds by performing
the Pachner moves $P_i$ from (\ref{item: input moves}), keeping track
of the PL arcs encoding the link $L$ throughout using the techniques
of Section~\ref{sec: mod tri}.  The result is the base triangulation
enriched with PL arcs representing the link $L$. As detailed in
Section~\ref{sec: embed}, this triangulation of $S^3$ can be cut open
along faces and embedded in $\R^3$, giving an embedding of the
cut-open link into $\R^3$ as a collection of PL arcs with endpoints on
the boundary of these tetrahedra. As in Figures~\ref{fig: base tri}
and~\ref{fig: fins and lenses}, these PL arcs are then tied up using
the face identifications to obtain a collection of closed PL curves
that represent $L$. An initial link diagram $D$ is obtained by
projecting this PL link onto a plane and recording crossing
information.  We then apply generic simplification methods to $D$ and
output the result.

This outline turns out to be deceptively simple. Some key difficulties
are:

\begin{enumerate}
\item Understanding what $2\to3$ and $3\to2$ Pachner moves do to the
  link $L$ is fairly straightforward as these correspond to changing
  the triangulation of a convex polyhedron in $\R^3$.  However, while
  these two moves theoretically suffice for (\ref{item: input moves}),
  in practice one wants to use $2\to0$ moves as well, see
  Section~\ref{sec: cert}, and these are much harder to deal with, as
  Figure~\ref{fig: 2 to 0 prob} shows. We thus expand each $2\to0$
  move into a (sometimes quite lengthy) sequence of $2\to3$ and
  $3\to2$ moves as discussed in Section~\ref{sec: 2 to 0}. We give a
  simplified expansion for the trickiest part, the
  endpoint-through-endpoint move, using 6 of the basic $2\to3$ and
  $3\to2$ moves instead of 14.

\item The complexity of the link grows very rapidly as we do Pachner
  moves, resulting in enormously complicated initial diagrams.  We
  greatly reduce this by elementary local simplifications to the link
  after each Pachner move, see Section~\ref{sec: simp arcs}.

\item Prior work on simplifying link diagrams was focused on those
  with 30 or fewer crossings, where random application of Reidemeister
  moves (plus flypes) are extremely effective.  Here, we need to
  simplify diagrams with 10,000 or even 100,000 crossings down to
  something with less than 100, and such methods proved ineffective
  for this.  Instead, we used the more global \emph{strand pickup}
  method of Section~\ref{sec: simp link}.
\end{enumerate}

\section{Background}
\label{sec: background}

\subsection{Triangulations}
\label{sec: tri}

Let $M$ be a compact orientable \3-manifold, possibly with boundary.
A \emph{triangulation} of $M$ is a cell complex $\cT$ made from
finitely many tetrahedra by gluing some of their \2-dimensional faces
in pairs via orientation-reversing affine maps so that the resulting
space is homeomorphic to $M$. These triangulations are not necessarily
simplicial complexes, but rather what are sometimes called semi\hyp
simplicial, pseudo\hyp simplicial, or singular triangulations. Of
particular importance are those with a single vertex, the
\emph{1-vertex triangulations}.  For any triangulation, we use $\cT^i$
to denote the $i$-skeleton of $\cT$, that is, the union of cells of
dimension at most $i$.

When $M$ has nonempty boundary, an \emph{ideal
  triangulation} of $M$ is a cell complex $\cT$ made out of finitely
many tetrahedra by gluing \emph{all} of their \2-dimensional faces in
pairs as above so that $M \setminus \partial M$ is homeomorphic to
$\cT \setminus \cT^0$. Put another way, the manifold $M$ is what you
get by gluing together \emph{truncated} tetrahedra in the
corresponding pattern. See \cite{Tillmann2008} for background on
ideal triangulations, which we use only for \3-manifolds whose
boundary is a union of tori.  We always include the modifier ``ideal'',
so throughout ``triangulation'' means a non-ideal, also called
``finite'', triangulation.

\subsection{Triangulations with PL curves}
\label{sec: tris with curves}

Consider a tetrahedron $\Delta$ in $\R^n$ as the convex hull of its
vertices $v_0, v_1, v_2,$ and $v_3$.  We encode points in $\Delta$
using \emph{barycentric coordinates}, that is, write $p \in \Delta$ as
the unique convex combination $\sum_i x_i v_i$ and then represent $p$
by the vector $(x_0,x_1,x_2,x_3)$, where of necessity
$\sum_i x_i = 1$. For a 3-manifold triangulation $\cT$, we view each
tetrahedron $\tau$ as having a fixed identification with the
tetrahedron in $\R^4$ whose vertices are the standard basis vectors;
we use this to encode points in $\tau$ by barycentric coordinates.

An oriented PL curve in $\cT$ will be described by a sequence of such
barycentric coordinates as follows.  A \emph{barycentric arc} $a$ is
an ordered pair of points $(u,v)$ in a tetrahedron $\tau$,
representing the straight segment joining them.  We write
$a.{\tt{start}} = u$ and $a.{\tt{end}} = v$. A \emph{barycentric
  curve} $C$ is a sequence of barycentric arcs $a_i$ such that
$a_i.{\tt{end}}$ and $a_{i+1}.\tt{start}$ correspond to the same point
in $M$ under the face identifications of $\cT$.  For a barycentric
curve, we define $a_i.{\tt{next}} = a_{i+1}$ and
$a_{i+1}.{\tt past }= a_i$; these may not lie in the same
tetrahedron. Suppose the barycentric curve $C$ consists of $N$
barycentric arcs.  If $a_0.{\tt{start}}$ and $a_N.\tt{end}$
correspond to the same point in $M$, we have a \emph{barycentric
  loop}. An embedded barycentric loop is a \emph{barycentric knot}. A
\emph{barycentric link} is a finite disjoint union of such knots.

We always require that a barycentric curve $C$ is in the following
kind of general position with respect to $\cT$.  First, $C$ is
disjoint from  $\cT^1$.  Second, any intersection of a
constituent barycentric arc $a$ with $\cT^2$ is an endpoint of
$a$.  Finally, arcs do not bounce off faces of $\cT^2$, so if an arc
ends in a face, the next arc must be in the adjacent tetrahedron on
the other side of that face.  Throughout, we use only points whose
barycentric coordinates are in $\Q$.

\subsection{Dehn filling}
\label{sec: dehn}

Suppose $\Mdot$ is a compact \3-manifold whose boundary is a union of
tori.  A simple closed curve on a surface is \emph{essential} if it
does not bound a disk.  Given an essential simple closed curve
$\alpha_i$ on each boundary component $T_i$, the \emph{Dehn filling}
of $\Mdot$ along $\alpha = (\alpha_1,\ldots,\alpha_k)$ is the closed
3-manifold $\Mdot(\alpha)$ obtained from $\Mdot$ by gluing a solid
torus $D^2 \times S^1$ to each $T_i$ so that
$\partial D^2 \times \{\mathrm{point}\}$ is $\alpha_i$.  When $\Mdot$
is the exterior of a link $L$ in $S^3$ and each $\alpha_i$ is a small
meridional loop about the $i$-th component of $L$, then
$\Mdot(\alpha)$ is just $S^3$. Given an ideal triangulation $\Tdot$ of
$\Mdot$ and Dehn filling curves $\alpha$, we follow
\cite{WeeksCloseCusp, JacoRubinstein2014, JacoSedgwick2003} to create
a 1-vertex triangulation $\cT$ of $\Mdot(\alpha)$ that we call the
\emph{layered filling triangulation}; see Section~\ref{sec: layered
  tri}. A key point is that the link $L$ consisting of the cores of
the $k$ added solid tori is a barycentric link in $\cT$ made of just
$k$ barycentric arcs.

\subsection{Pachner moves}
\label{sec: Pachner}

\begin{figure}[!tbp]
  \centering
  \begin{tikzpicture}[nmdstd]
    \node[above right] at (0, 4.5)
      {\includegraphics[width=6.25cm]{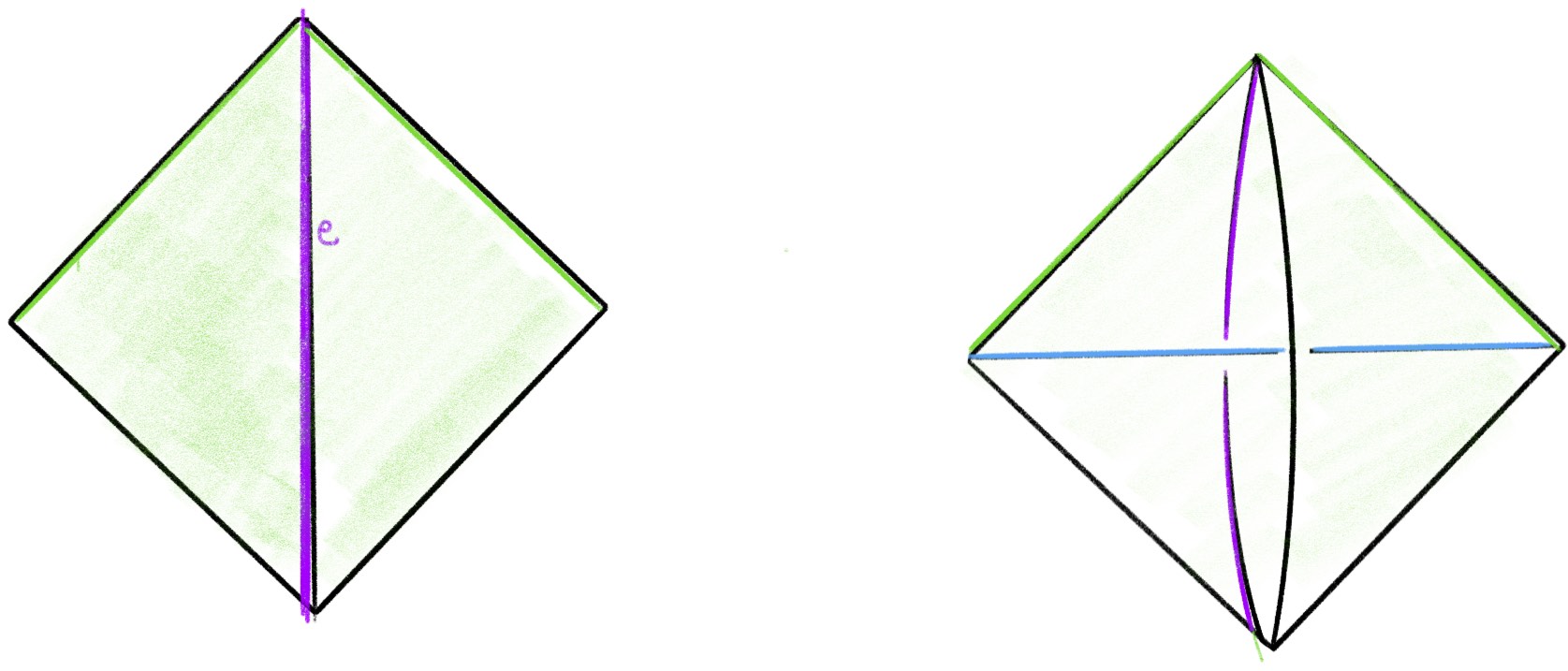}};

    \node at (3.25, 5) {$0 \to 2$};

    \node[above right] at (7, 4.3)
      {\includegraphics[width=5.5cm]{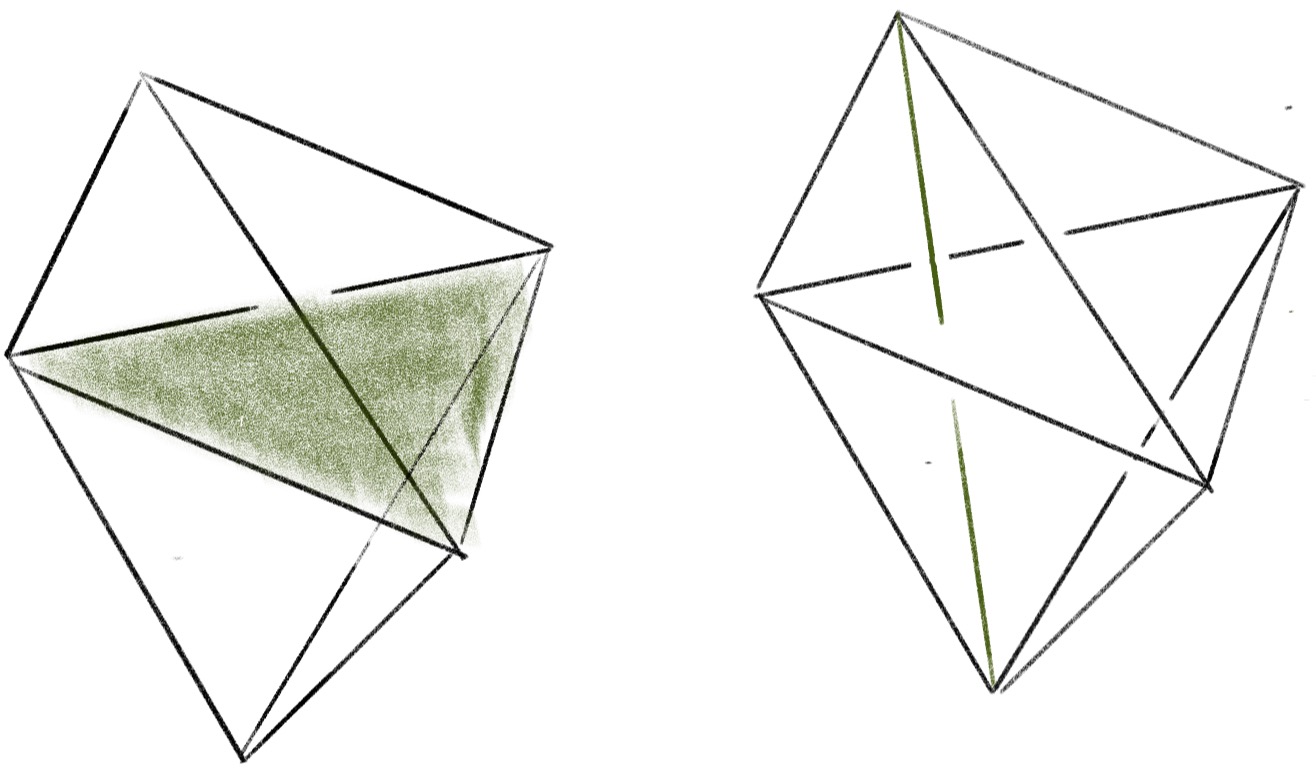}};

    \node at (10.0, 5) {$2 \to 3$};

    \node[above] at (7, 0.25)
      {\includegraphics[width=6cm]{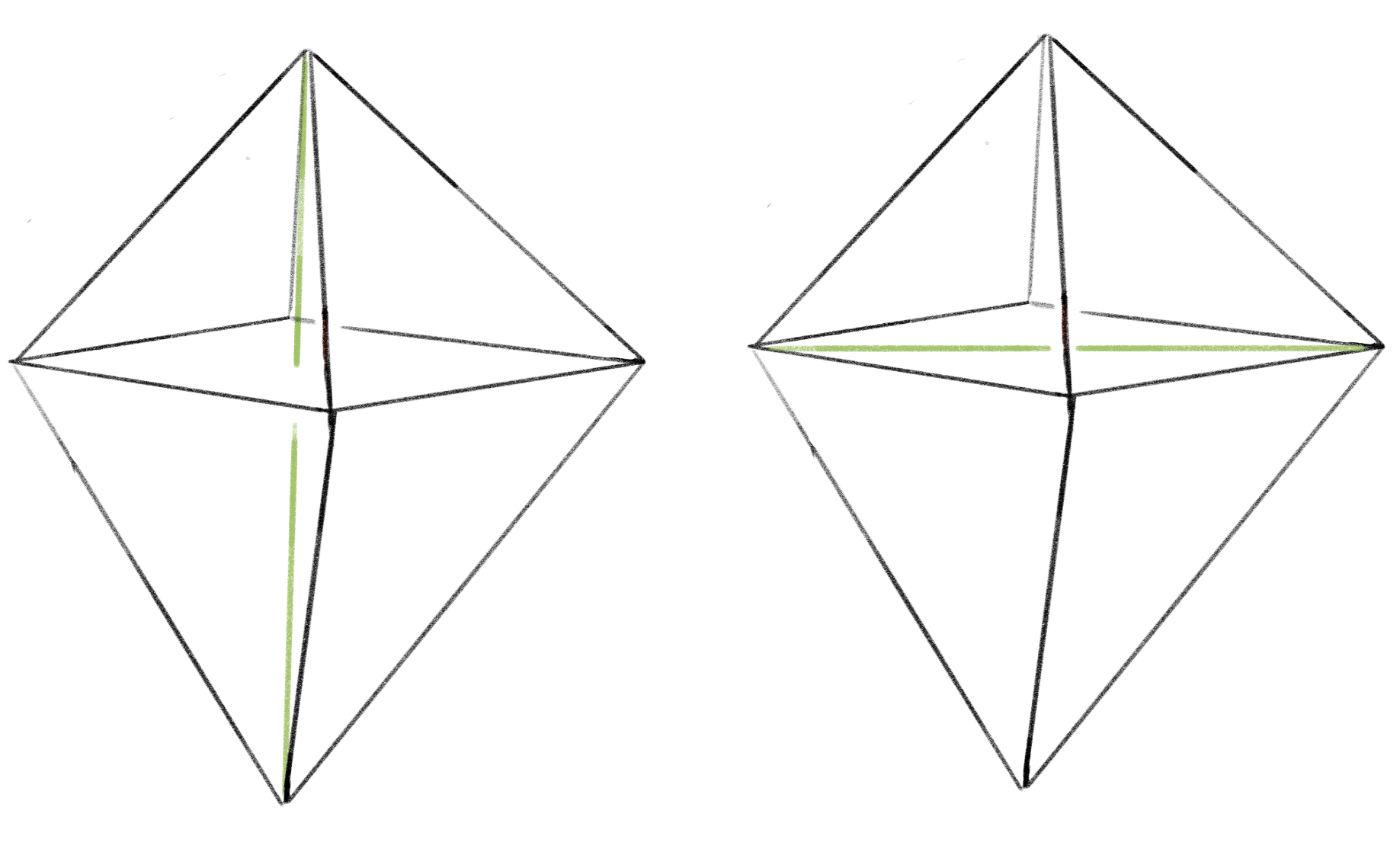}};

    \node[above] at (7, 0.75) {$4 \to 4$};
  \end{tikzpicture}

  \caption{Pachner moves that preserve the number of vertices.}
    \label{fig: pachner moves}
\end{figure}

A 3-manifold triangulation $\cT$ can be modified by local
\emph{Pachner moves}, also known as \emph{bistellar flips} to give a new
triangulation of the same underlying manifold.  Those we
use are shown in Figure~\ref{fig: pachner moves} and are as follows:

\begin{enumerate}

\item The $2\to3$ move and its inverse $3\to2$ move.  These take a
  triangulation of a ball, possibly with boundary faces glued
  together, and retriangulate the interior without changing the
  boundary triangulation.  Specifically, the $2\to3$ move takes a pair
  of distinct tetrahedra sharing a face and replaces them with three
  new tetrahedra around a new central edge. The $3\to2$ move reverses
  this, replacing three distinct tetrahedra around a valence-3 edge
  with two tetrahedra sharing a face.

\item The $4\to4$ move. The $4\to4$ move takes four tetrahedra around
  a central edge and replaces them with four new tetrahedra assembled
  around a new valence-4 edge.

\item The $2\to0$ move and its inverse $0\to2$ move. The $2\to0$ move
  takes a pair of tetrahedra sharing two faces to form a valence-2
  edge and collapses them onto their common faces. The $0\to 2$ move
  reverses this by puffing air into a pair of faces sharing an edge
  and adding two new tetrahedra. We call the complex created by the
  $0 \to 2$ move a \emph{pillow}.  The $0\to2$ move inflates a pillow
  and the $2\to0$ move collapses a pillow.
\end{enumerate}

If $\cS$ and $\cT$ are two \1-vertex triangulations of the same closed
3-manifold $M$, then there is a sequence of Pachner moves that
transforms $\cS$ into $\cT$, provided both $\cS$ and $\cT$ have at
least two tetrahedra. To do this, one need only use $2\to3$ and
$3\to2$ moves by \cite[Theorem 1.2.5]{Matveev2007} (see also
\cite{Pachner1991, Piergallini1988}). As noted in the introduction,
the $2 \to 0$ and $0 \to 2$ moves are much harder to deal with than
the others.  We will call the $2 \to 3$, $3 \to 2$ and $4 \to 4$ moves
the \emph{simple Pachner moves}, and note that one needs only these
moves to connect two triangulations as above. However, as discussed in
Remark~\ref{rem: atomic}, the $2 \to 0$ and $0 \to 2$ moves are
extremely useful in practice.  When $M$ is $S^3$, any triangulation
$\cT$ with $n$ tetrahedra is related to a standard triangulation by at
most $12\cdot 10^6n^22^{2\cdot 10^3n^2}$ Pachner moves
\cite{Mijatovic2003}. Experimentally, one needs many fewer moves
\cite{Burton2011}.  In our data shown in Figure~\ref{fig: expanded
  moves}, the number is $O(n)$; this is essential for the utility of
our algorithm for \FindDiagram.


\section{Building the initial triangulation}
\label{sec: layered tri}

In this section, we detail the construction of the layered filling
triangulation $\cT$, mentioned in Section~\ref{sec: dehn}, from part
(\ref{item: ideal input}) of the input: an ideal triangulation $\Tdot$
and Dehn filling slopes $\alpha$.  This procedure is nearly identical
to the approach used in the SnapPy kernel \cite{WeeksCloseCusp} for
constructing triangulations of Dehn fillings, with a slight tweak at
the very last step to end up with a triangulation in the style of
\cite{JacoSedgwick2003} containing layered triangulations of the Dehn
filling solid tori.

\begin{figure}[b]
  \centering
  \includegraphics[width=0.35\textwidth]{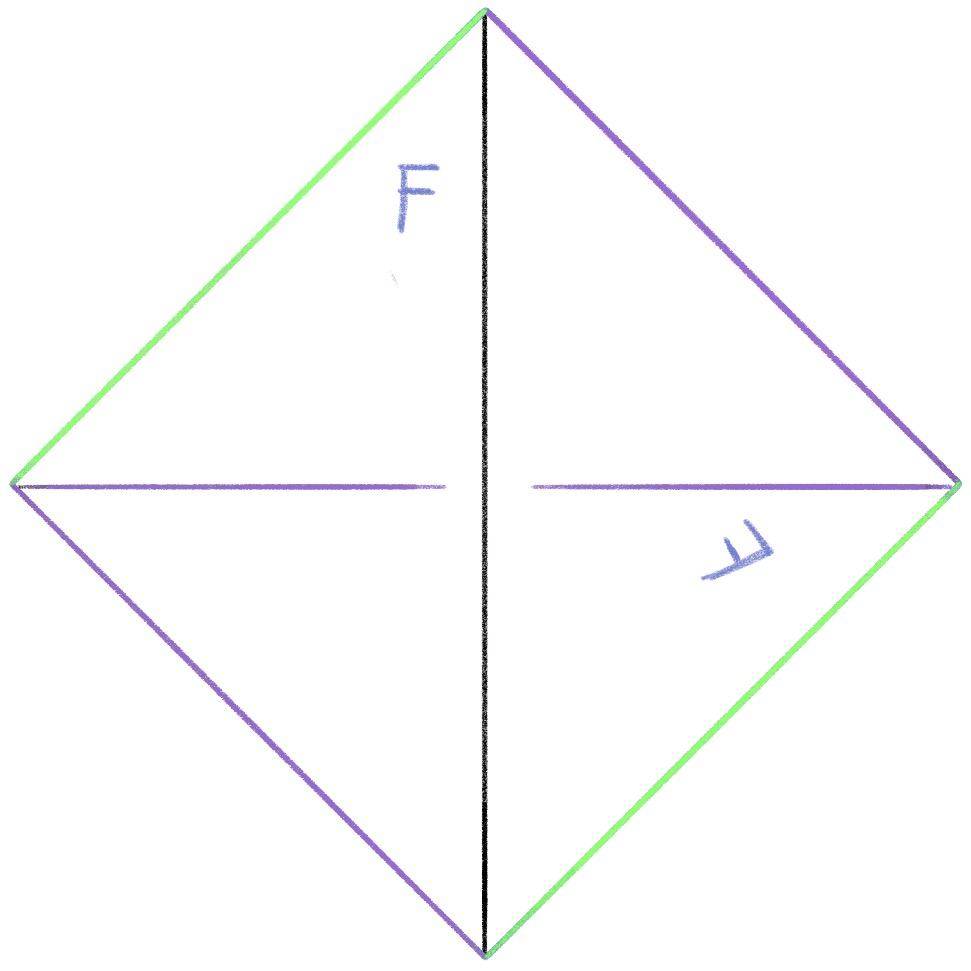}
  \caption{A 1-tetrahedron triangulation of a solid torus. Here, the
    back two faces are identified with the twist indicated by the
    letter ``F''; the edge colors indicate the equivalence classes in
    the glued-up triangulation \cite[Appendix A]{HuszarSpreer2019}.}
  \label{fig: solid}
\end{figure}

Given a single tetrahedron, the face identification indicated in
Figure~\ref{fig: solid} produces a solid torus. Any \1-tetrahedron
triangulation of a solid torus is combinatorially equivalent to this
one. The triangulation of the boundary torus induced by the
\1-tetrahedron triangulation of the solid torus is the standard
\1-vertex triangulation of a torus. For any edge $e$ of the boundary
torus, there is a move modifying the triangulation that, after cutting
open the torus so that the edge $e$ is the diagonal set inside a
square, flips the diagonal. This is commonly called an \emph{edge
  flip} move. Any such flip move can be realized by attaching a
tetrahedron as in Figure~\ref{fig: flip}; this produces a
triangulation of a solid torus with an additional tetrahedron and with
boundary triangulated according to the flip move. A layered solid
torus with $t$ layers is a triangulation of a solid torus that is
obtained from a $(t-1)$-layer layered solid torus by attaching a new
tetrahedron realizing some bistellar flip of the boundary torus. A
0-layer layered solid torus is the \1-tetrahedron solid torus. This
0-layer solid torus contained in the layered solid torus is called the
\emph{core solid torus}.  While every layered solid torus has boundary
given by the standard one vertex triangulation of the torus, the
isotopy class, or \emph{slope}, of the boundary of a meridian disk
changes as layers are added.

\begin{figure}
  \centering
  \begin{tikzoverlay}[width=0.3\textwidth]{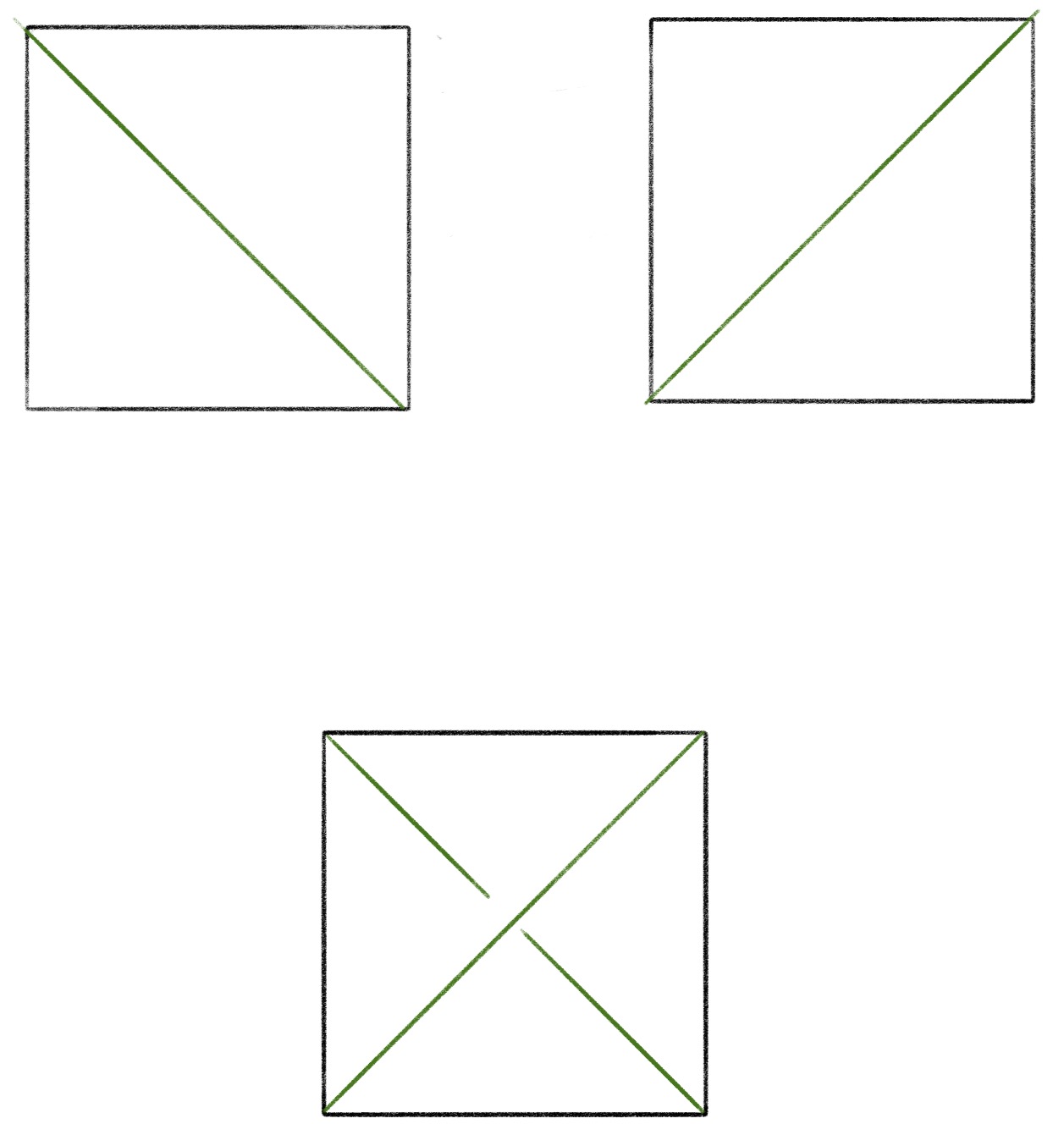}
    \draw[->, color=black!60, line width=0.65pt] (45.5,85.0) -- +(10,0);
  \end{tikzoverlay}
  \caption{An edge flip (top) and corresponding tetrahedron
    (bottom).}
  \label{fig: flip}
\end{figure}

Let $T$ be the standard triangulation of a torus and $\alpha$ a slope
on $T$.  There is an algorithm for producing a layered solid torus so
that filling $\alpha$ bounds a meridian disk, see
\cite[Theorem~4.1]{JacoSedgwick2003}.  One can then attach this
layered solid torus to a triangulated manifold $\Mdot$ whose torus
boundary is triangulated in the standard \1-vertex way to obtain a
triangulation of the Dehn filling $\Mdot(\alpha)$.  We build the
layered filling triangulation from $\Tdot$ and $\alpha$ as
follows. For notational simplicity, we assume $\Mdot$ has only one
boundary component.

\begin{algothm}
  \label{alg: weeks}

  $\tt layered\_filling\_triangulation(\Tdot,\alpha)$
  \begin{enumerate}

  \item Truncate the ideal tetrahedra of $\Tdot$ to obtain a cell
    complex homeomorphic to the compact manifold with torus boundary
    $\Mdot$.

  \item Subdivide this cell complex by placing a vertex at the center
    of each hexagonal face to divide it into 6 triangles, and
    then coning to the middle of every 3-cell. This produces a
    triangulation of $\Mdot$.

  \item Simplify the triangulation of the boundary using the procedure
    in \cite{WeeksCloseCusp}, which largely consists of folding two
    adjacent triangles across their common edge, until the boundary tori
    are triangulated in the standard way.

  \item Add layers to the boundary until the slope $\alpha$ is
    standard, that is, corresponds to the meridian curve of the
    0-layer solid torus.

  \item \label{item: add last}
    Attach the 0-layer solid torus.

  \item Collapse edges joining distinct vertices to obtain a
    \1-vertex triangulation of $M(\alpha)$.
  \end{enumerate}
\end{algothm}

\noindent
Note Algorithm~\ref{alg: weeks} is identical to \cite{WeeksCloseCusp}
except for Step~\ref{item: add last}, where instead one adds a single
tetrahedron with two faces folded together to form a valence-1 edge;
with vertices identified, this single tetrahedron is a solid torus
with a meridian disk collapsed to a point.

Because we constructed the layered filling triangulation $\cT$ from an
ideal triangulation of a manifold with toroidal boundary, we know
which layered solid tori come from Dehn filling. For each such layered
solid torus, its core curve can be represented by the line segment
running between the barycenters of the faces in the core solid torus
that are glued together. In particular, there are natural barycentric
arcs that represent the link $L$ consisting of the core curves of all
the Dehn fillings.  We add these arcs to produce the initial
triangulation $\cT$ of $M(\alpha)$ with its associated barycentric
link.


\section{Finding certificates}
\label{sec: cert}

Part (\ref{item: input moves}) of the input to our algorithm is a
certificate that the Dehn filling $M = \Mdot(\alpha)$ is $S^3$ in the
form of Pachner moves simplifying a triangulation $\cT$ of $M$ to the
base triangulation $\cT_0$ of $S^3$. In practice, one starts with an
ideal triangulation $\Tdot$ and Dehn filling slopes $\alpha$ where it
is unknown if $M(\alpha)$ is $S^3$.  We therefore need a way of
finding this sequence of Pachner moves when it exists.  While deciding
if a closed 3-manifold $M$ is $S^3$ is in $\NP$ by \cite{Ivanov2008,
  Schleimer2011} and additionally in $\coNP$ assuming the Generalized
Riemann Hypothesis \cite[Theorem~11.2]{Zentner2018}, no
sub-exponential time algorithm is known.  The current algorithm that
is best in practice for $S^3$ recognition is to first heuristically
simplify the input triangulation using Pachner moves and then apply
the theory of almost normal surfaces; see Algorithm~3.2 of
\cite{Burton2013}.  However, triangulations of $S^3$ that are truly
hard to simplify using Pachner moves have not been encountered in
practice, and it is open whether they exist at all \cite{Burton2011}.
Thus, when $M$ is $S^3$, the initial stage of Algorithm~3.2 of
\cite{Burton2013} nearly always arrives at a 1-tetrahedron
triangulation of $S^3$ and no normal surface theory is needed.  The
usefulness of our algorithm for \FindDiagram\ relies on the fact that
a heuristic search using Pachner moves gives a practical recognition
algorithm for $S^3$.

\begin{remark}
  \label{rem: atomic}
  The effectiveness of our heuristic search procedure relies on the
  $2\to0$ move being atomic.  Initially, we tried restricting our
  heuristic search to just the \emph{simple} Pachner moves (recall
  these are $2 \to 3$, $3 \to 2$, and $4 \to 4)$, but were typically
  unable to find a sequence that simplified the input triangulation of
  $S^3$ down to one with just a few tetrahedra.  (To square this with
  \cite{Burton2011}, note from Figure~\ref{fig: expanded moves} that
  our triangulations are much larger.)  As is clear from
  Section~\ref{app: 2 to 0}, factoring the $2\to0$ move as a
  sequence of $2\to3$ and $3\to2$ moves is complicated enough that one
  cannot expect to stumble upon these sequences when the triangulation
  is large and the search is restricted to simple Pachner moves.
\end{remark}

Our simplification heuristic closely follows that of SnapPy
\cite{SnapPy}, with some modifications that reduce the complexity of
the final barycentric link in $\cT_0$.  These include:\begin{enumerate}
\item Simplifying the layered filling triangulation $\cT$ of
  Section~\ref{sec: dehn} as much as possible without modifying the few
  tetrahedra containing the initial link.
\item Finding sequences of Pachner moves to $\cT_0$ for several
  different layered filling triangulations, and then using the one
  requiring the fewest moves for the computations in
  Sections~\ref{sec: mod tri}--\ref{sec: simp link}.
\item Ensuring the tail of the sequence of moves is a geodesic
  in the Pachner graph of \cite{Burton2011}.
\end{enumerate}
The details are in Section~\ref{app: cert}.


\subsection{Basic triangulation simplification}
\label{app: cert}

In this section, we detail how
the initial triangulation $\cT$ and Pachner moves transforming it into
$\cT_0$ are constructed from the input pair $(\Tdot, \alpha)$.  Our
overall goal is to minimize the number of Pachner moves and,
especially, minimize the number that involve any arcs.

The SnapPy kernel \cite{SnapPy} provides two routines for trying to
simplify a triangulation.  The first is \texttt{simplify}, which
greedily does various moves that immediately reduce the number of
tetrahedra, as well as random $4 \to 4$ moves in hopes of setting up
such a reduction; it is very similar to Algorithm 2.5 of
\cite{Burton2013} which is \texttt{intelligentSimplify} in Regina
\cite{Regina}.  The second is \texttt{randomize}, which first does
$4 t$ random $2 \to 3$ moves, where $t$ is the number of tetrahedra,
and then calls \texttt{simplify}; it is key for escaping local minima
in the set of triangulations.  In practice, one sometimes needs
\texttt{randomize} in order to reduce a layered filling triangulation
$\cT$ to $\cT_0$.  Because \texttt{randomize} increases the number of
tetrahedra drastically, however temporarily, we work hard to avoid
applying it when there are arcs present.  We modified
\texttt{simplify} and \texttt{randomize} so that one can specify a
subcomplex of the triangulation that is to remain unchanged. Our basic
strategy is:
\begin{enumerate}
\item
  \label{item: go go go}

  Construct the layered filling triangulation $\cT$ from $(\Tdot,
  \alpha)$.

\item
  \label{item: side step}

  Apply \texttt{simplify} and \texttt{randomize} extensively to $\cT$
  with the proviso that each tetrahedron that is the core of a
  filling layered solid torus is not modified.  Call the new
  triangulation $\cT'$.  It contains a barycentric link $L'$
  consisting of the cores of the layered solid tori, which is isotopic
  to the original $L$ in $\cT$.

\item If \texttt{simplify} reduces $\cT'$ to $\cT_0$, record the
  sequence of Pachner moves and consider $\big(\cT', L, (P_i)\big)$ a
  candidate input for the core algorithm.  Otherwise, throw it away.

\item Go back to \ref{item: go go go} until we have several candidates
  for $\big(\cT', L, (P_i)\big)$ or we get tired.  If no candidate is
  found, raise an error; otherwise output the
  one where $(P_i)$ is shortest.
\end{enumerate}
Despite needing \texttt{randomize} to simplify some triangulations of
$S^3$ to $\cT_0$, so far the above has always succeeded.

Finally, it turns out the last few Pachner moves are the most
expensive, since the link is usually quite complicated at that
point. Therefore, we built a look-up table of all triangulations of
$S^3$ with at most five tetrahedra, along with geodesic Pachner move
sequences reducing these triangulations to $\cT_0$. If, when searching
for a sequence of Pachner moves, we reduce the initial triangulation
to one with fewer than five tetrahedra, we can look up whether we
indeed have $S^3$, and we then append to the certificate the geodesic
Pachner move sequence reducing to $\cT_0$.  While this only shortens
the sequence by a few moves, it gave us a major speedup.  For further
details, see the file \texttt{simplify\_to\_base\_tri.py} in
\cite{CodeAndData}.  One referee points out that we might be able to do
even better with a larger look-up table.  Currently, it contains 1,448
triangulations, and adding six or seven tetrahedra would increase this
by 13,660 and 169,077 triangulations respectively
\cite[Section~3.1]{Burton2011}.  However, since the simplification
heuristic of Section~\ref{sec: cert} greedily applies $3 \to 2$ moves
whenever they are available, one only needs store geodesics for those
triangulations where either there are no $3 \to 2$ moves or there is a
$3 \to 2$ move that is not the start of a geodesic to $\cT_0$.  This
idea seems worth exploring, but we leave it for others to pursue.


\section{Modifying triangulations with arcs}
\label{sec: mod tri}

Part (\ref{item: ideal input}) of the input data produced
the layered filled triangulation $\cT$ of Section~\ref{sec: layered tri},
which comes enriched with a barycentric link $L$. Part (\ref{item:
  input moves}) of the input data is a sequence of Pachner moves
$(P_i)$ converting $\cT$ to the base triangulation $\cT_0$ described in
Section~\ref{sec: embed}. The next step of our algorithm is to apply
the moves $(P_i)$ to $\cT$, carrying the link $L$ along as we go.


\subsection{Pachner moves with arcs}

\label{sec: pachner arcs}

In this section, we describe how we keep track of the barycentric
arcs as Pachner moves are performed. Throughout this section, $\cT$ is a
triangulation with barycentric arcs and $P$ is a Pachner move
transforming $\cT$ into the triangulation $\cS = P\cT.$

Recall that the \emph{simple Pachner moves} are $2\to3$, $3\to2$,
and $4\to4$.  Each simple Pachner move $P$ takes a triangulated ball
$B$ in $\cT$, possibly with boundary faces glued together, and
re-triangulates $B$ without changing the triangulation of $\partial B$
to obtain $\cS$. The arcs of the link $L$ contained in the ball are
initially encoded using the barycentric coordinates of $\cT$, and we
need to re-express these arcs in the new barycentric coordinate system
of $\cS$.  We model each simple Pachner move as a pair of
triangulations of concrete bipyramids in $\R^3$, as shown in
Figure~\ref{fig: bipyramids}. We identify the tetrahedra in $\cT$ and
$\cS$ involved in $P$ with tetrahedra in the corresponding bipyramid
in $\R^3$. This identification allows us to map barycentric arcs from
$\cT$ into $\R^3$, and then to map these arcs in $\R^3$ into
$\cS$. The remainder of Section \ref{sec: mod tri} details how this
is used to give a method ${\tt with\_arcs}[P]$ that applies a simple
Pachner move $P$ to $\cT$ while transferring the barycentric arcs from
$\cT$ to $\cS$. This approach cannot work for the $2\to0$ move, as
demonstrated by Figure~\ref{fig: 2 to 0 prob}.  To implement
${\tt with\_arcs[2\to0]}$, we factor the $2\to0$ move into a sequence
of $2\to3$ and $3\to 2$ moves as described in Section~\ref{sec: 2 to
  0}.

\begin{figure}
\centering
{\includegraphics[width=0.55\textwidth]{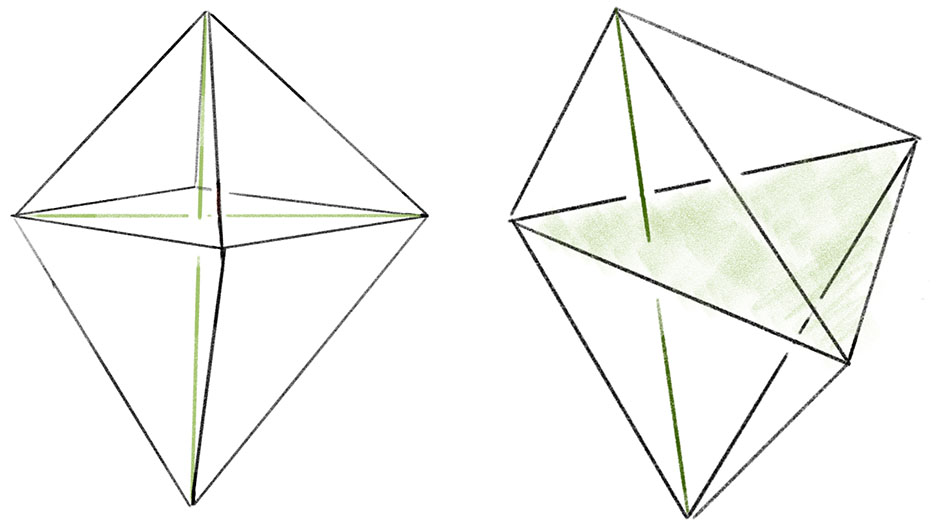}}
\caption{Two bipyramids with superimposed triangulations
  corresponding to before and after applying the $4\to4$ move and
  $2\to3$ or $3\to2$ moves.}
    \label{fig: bipyramids}
\end{figure}

\begin{figure}[b]
\centering
{\includegraphics[width=0.85\textwidth]{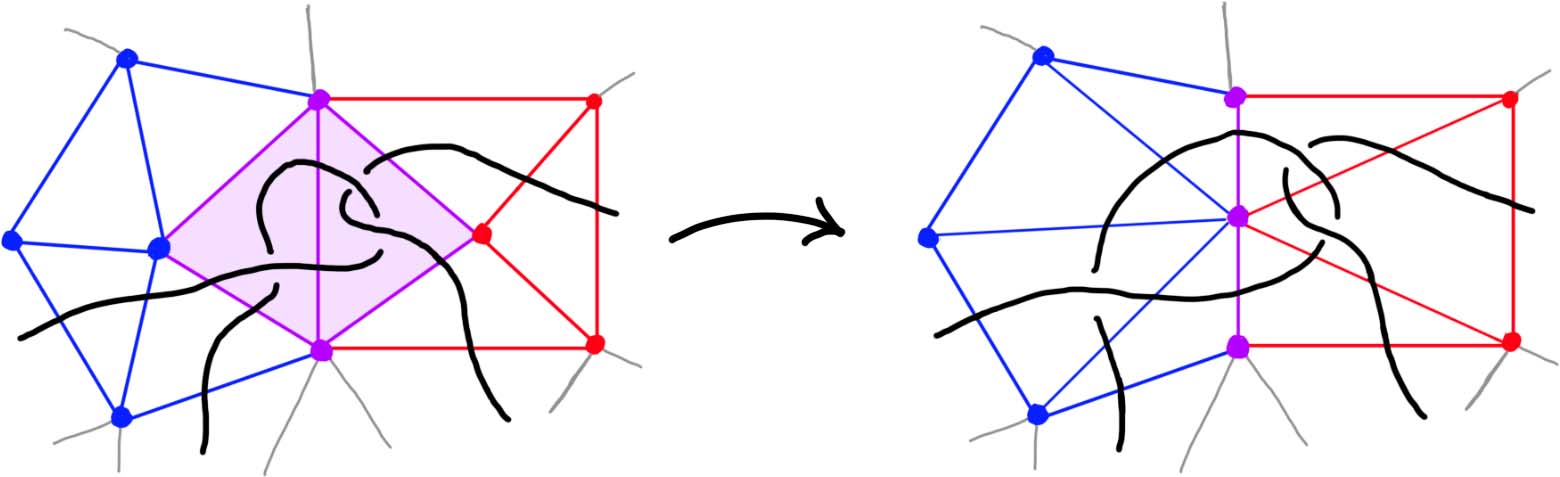}}
\caption{Cartoon showing the difficulty of doing a $2 \to 0$ move with
  arcs present.  At left, the two tetrahedra in the pillow to be
  collapsed are shaded.  Here, you should regard the vertical purple
  arc as the valence-2 edge, with the blue and red dots opposite being
  cross-sections of the two edges of the pillow that become identified
  in the collapse. The problem is that we have to push all the
  topology of the \emph{link} out of the pillow before we collapse it,
  requiring us to move arcs into many of the tetrahedra adjacent to
  the pillow.}
    \label{fig: 2 to 0 prob}
\end{figure}

\subsection{Weak barycentric arcs}
\enlargethispage{0.4cm}

It will be useful to have a mild generalization of barycentric arcs
that allows for negative barycentric coordinates. Identify $\R^3$ with
the hyperplane $\sum_i x_i = 1$ in $\R^4$. The barycentric coordinates
$(x_1, x_2, x_3, x_4)$ associated to the vertices of the standard
simplex in $\R^4$ extend to all of $\R^3$. A \emph{weak barycentric
  coordinate} is a tuple $(x_1, x_2, x_3, x_4)$ describing a point in
this hyperplane. Identifying a tetrahedron $\tau$ in $\cT$ with the
standard 3-simplex in $\R^4$, a \emph{weak barycentric point} in
$\tau$ is a weak barycentric coordinate defining a point in the
hyperplane $\sum_i x_i = 1$. A \textit{weak barycentric arc} $a$ is a
pair of weak barycentric points $(u,v)$ associated to a tetrahedron
$\tau$.

Note that a weak barycentric arc does not generally define a geometric
object in the triangulation $\cT$. However, a weak barycentric arc may
contain a sub-arc that is a genuine barycentric arc. From our
identification of $\tau$ with the standard 3-simplex in $\R^4$, the
intersection of a weak barycentric arc with the standard simplex is a
possibly empty barycentric arc. As ultimately we only want to work
with barycentric arcs, we use a trimming procedure that takes a weak
barycentric arc $a$ in $\tau$ and replaces it with the maximal
barycentric arc it contains.

\subsection{Putting the problem into $\R^3$}

We model the simple Pachner moves as a pair of triangulations of
concrete bipyramids in $\R^3$. We can identify the tetrahedra in
$\cT$ and $\cS$ involved in the move with tetrahedra in the
corresponding bipyramid. This identification allows us to map
barycentric arcs from $\cT$ into $\R^3$ and then to map these arcs in
$\R^3$ back to $\cS$. Combining these processes transfers arcs from
$\cT$ to $\cS$. We explain this in detail for the $2\to3$ move.

Recall that the $2\to3$ move takes two tetrahedra glued along a face
$F$, deletes the face $F$, adds a vertical edge, and re-triangulates
so that there are three tetrahedra assembled around the new vertical
edge; see Figure \ref{fig: pachner moves}.  The bipyramid is built
from a pair of tetrahedra $\Delta_N$ and $\Delta_S$ in $\R^3$ sharing
a single face. Let $A = (3,0,0) , B = (0,0,3),$ and $C = (0,3,0)$ be
the corners of the common face and let $N = (0,0,0)$ and $S = (2,2,2)$
be corners of the bipyramid lying over and under the common face.
These particular points are chosen as they are symmetric with respect
to affine maps. There is also a triangulation of this bipyramid
consisting of three tetrahedra assembled around a central edge running
from $N$ to $S$. We label these tetrahedra $\Delta_A$, $\Delta_B$,
$\Delta_C$, where $\Delta_A$ is the tetrahedron excluding $A$, and
likewise for $B$ and $C$. The $2\to3$ move replaces the \2-tetrahedron
triangulation with this \3-tetrahedron triangulation.

Let $\tau_N$ and $\tau_S$ be tetrahedra in $\cT$ sharing a face
$F$. To do the $2\to3$ move determined by the face $F$ while
preserving the barycentric arc data, we map the arcs in $\tau_N$ and
$\tau_S$ into the model bipyramid in $\R^3$ using the barycentric
coordinates. We first require a fixed map from the vertices of
$\tau_N$ and $\tau_S$ to the bipyramid of $\R^3$. The particular
assignment is determined by whether the move is called from the point
of view of $\tau_N$ or $\tau_S$ and from the internal labeling of the
vertices of the face $F$. We use a map identifying $\tau_N$ with
$\Delta_N$ and $\tau_S$ with $\Delta_S$. We also require a map
identifying the three new tetrahedra in $\cS$ with the tetrahedra
$\Delta_A$, $\Delta_B$, $\Delta_C$ in the other triangulation of the
bipyramid. In $\cS$, let $\sigma_A,\sigma_B$, and $\sigma_C$ be the
three tetrahedra incident the new central edge.  For precise details
on these vertex correspondences, see the file
\texttt{barycentric\_geometry.py} in \cite{CodeAndData}.

Barycentric coordinates in $(\tau_N$, $\tau_S)$, and
$(\sigma_A,~\sigma_B,~\sigma_C)$ determine unique points in the
bipyramid via the vertex correspondence and the barycentric
coordinates of the two triangulations of the
bipyramid. This then determines a map sending barycentric arcs from
$\cT$ and $\cS$ to arcs in the bipyramid. If $a$ is a barycentric arc
in a tetrahedron $\tau$ in $\cT$ or $\cS$, let
${\tt{arc\_embedding}}_{\tau}(a)$ be the corresponding arc in the
bipyramid.

\begin{remark}
  It is crucial that we check if the arcs in the bipyramid are in
  general position (in the sense of Section~\ref{sec: tris with
    curves}) with the \3-tetrahedron triangulation. For example, it
  can happen that an arc that is disjoint from the 1-skeleton of the
  \2-tetrahedron triangulation of the bipyramid intersects the
  vertical edge in the \3-tetrahedron triangulation. When there is
  any such \emph{general position failure}, we perturb the north and
  south poles of the bipyramid slightly and repeat the above process.
\end{remark}

PL arcs in the bipyramid can also be described by barycentric arcs in
$\cT$ and $\cS$. Given distinct points $p$ and $q$ in the bipyramid,
there is a sequence of barycentric arcs $a_j$ contained in either
$(\tau_N, \tau_S)$ or in $(\sigma_0, \sigma_1, \sigma_2)$, such that
the line segment between $p$ and $q$ is the concatenation of the line
segments ${\tt{arc\_embedding}}_{a_j.{\tt{tet}}}(a_j)$ is where
$a_j.\tt{tet}$ is the tetrahedron containing $a_j$.  This is done by
describing the line segment from $p$ to $q$ as a weak barycentric arc
$a_{p,q}(\tau)$ for each tetrahedron $\tau$ in the relevant
triangulation, then trimming these weak barycentric arcs. The function
that takes an arc $a$ in the bipyramid and adds this sequence of
barycentric arcs to the tetrahedra $(\tau_N, \tau_S)$ or
$(\sigma_A, \sigma_B, \sigma_C)$ in $\cT$ or $\cS$ is denoted
${\tt{arc\_pullback}}_{\cT}(a)$ or ${\tt{arc\_pullback}}_{\cS}(a)$.

\subsection{Transferring the arcs}

By combining ${\tt{arc\_pullback}}$ and ${\tt{arc\_embedding}}$, we
define a function $\tt transfer\_arcs$ that takes barycentric arcs in
$\cT$, maps them into the bipyramid, then pulls them back to
$\cS$. This enables us to build a version of the $2\to3$ move that
transfers arcs from $\cT$ to $\cS$:

\begin{algothm} ${\tt with\_arcs[2\to3]}$
  \begin{enumerate}
  \item Identify $\tau_N$ with $\Delta_N$ and $\tau_S$ with $\Delta_S$
    in the bipyramid in $\R^3$.
  \item Do the $~2\to3$ move to get new tetrahedra in $\cS$ that are
    identified with the corresponding tetrahedra in the
    \3-tetrahedron triangulation of the bipyramid.
  \item Apply $\tt transfer\_arcs$: for each arc $a$ in $\tau_N$ and
    $\tau_S$, add ${\tt{arc\_embedding}}_{a.\tt tet}(a)$ to list
    $\tt arcs$. For each arc $b$ in $\tt arcs$, apply
    ${\tt{arc\_pullback}}_{\cS}(b)$.
  \end{enumerate}
\end{algothm}

The above approach is easily modified to accommodate the $3\to 2$
move. For the $4\to4$ move, this approach works with minor
modifications if one uses a bipyramid with square base. We therefore
can can implement methods $\tt with\_arcs[3\to2]$ and
$\tt with\_arcs[4\to4]$ that handle barycentric arcs.

\subsection{Simplifying arcs}

\label{sec: simp arcs}
Given the inputs (\ref{item: ideal input}) and (\ref{item: input
  moves}) of Section~\ref{sec: outline}, the Pachner moves with arcs
machinery always produces the desired link $L$ in the base
triangulation $\cT_0$.  However, even in the smallest examples,
applying the sequence of Pachner moves to $\cT$ produces incredibly
complicated configurations of arcs in $\cT_0$ encoding $L$.  This
complexity makes necessary computational geometry tasks prohibitively
expensive. Fortunately, much of this complexity is not topologically
essential, and the number of arcs can be decreased dramatically by the
basic simplifications we now describe.  Without these, applying our
full algorithm to an ideal triangulation $\Tdot$ with just two
tetrahedra resulted in 838 arcs and an initial link diagram with 5,130
crossings; with the simplifications, there are 19 arcs and 35
crossings. A 3-tetrahedra ideal triangulation resulted in 129,265 arcs
compared to 27 with simplifications. An example with 10 tetrahedra
would be impossible without these simplifications. Our two kinds of
simplification moves are shown in Figure~\ref{fig: simplify}.

\begin{figure}
  \centering
  \begin{tikzpicture}[nmdstd]
    \begin{scope}[shift={(0, 4.25)}]
      \node[above right] at (0, 0)
         {\includegraphics[height=3cm]{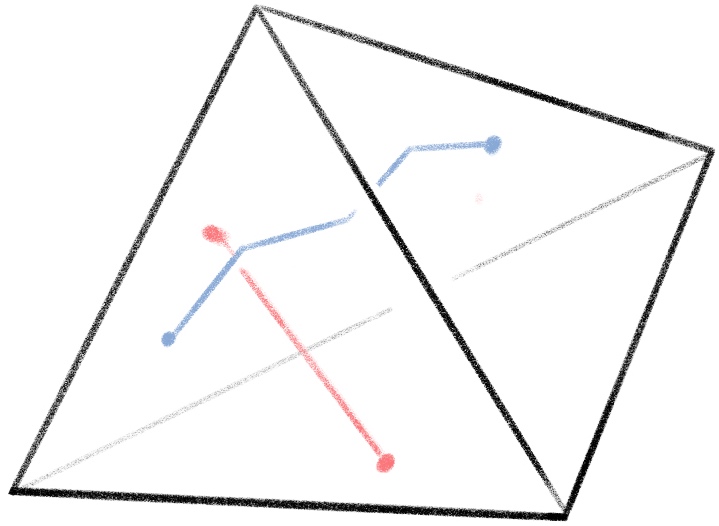}};
      \node[above right] at (7, 0)
         {\includegraphics[height=3cm]{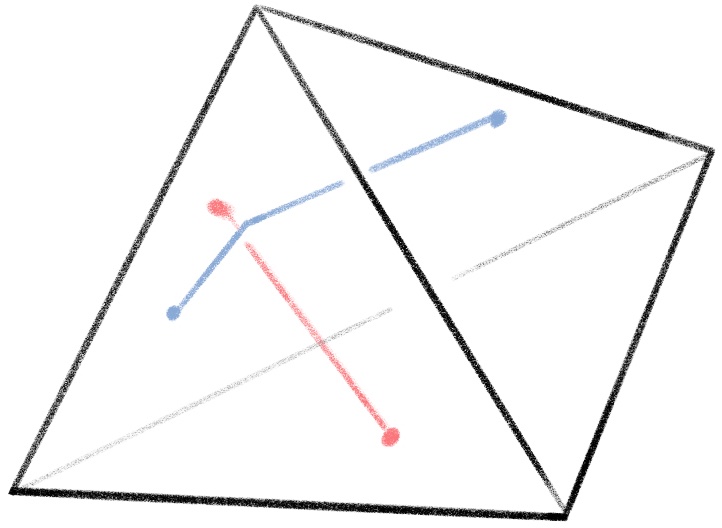}};
      \draw [->, line width=1.1pt] (5, 1.75) -- +(1.6, 0) node
         [midway, below=2pt] {straighten};
      \node at (2.27, 2.3) {$a$};
      \node at (2.65, 2.45) {$b$};
    \end{scope}

    \node[above right] at (0, 0) {\includegraphics[height=3cm]{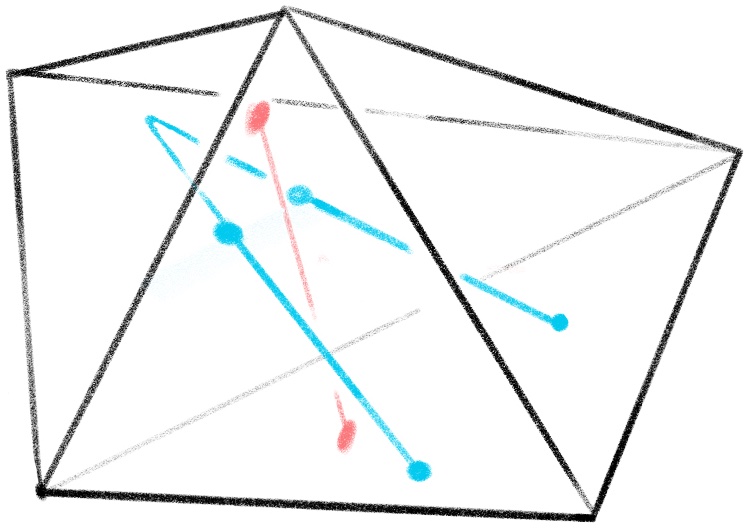}};
    \node[above right] at (7, 0) {\includegraphics[height=3cm]{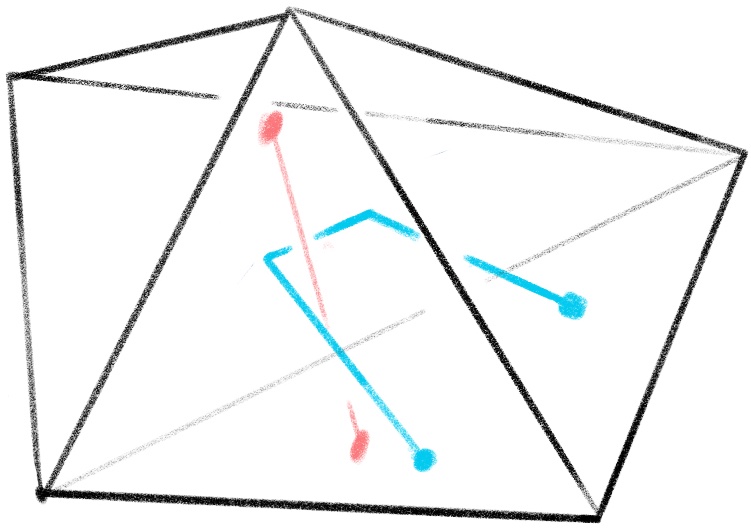}};
    \draw [->, line width=1.1pt] (5, 1.75) -- +(1.6, 0) node [midway, below=2pt] {push};
    \node at (1.25, 2.48) {$a$};
    \node at (1.0, 2.15) {$b$};
    \node at (0.9, 3.1) {$\tau$};
    \node at (4.15, 1.1) {$\tau'$};
    \node[rotate=25] at (1.15, 1.1) {$F$};
  \end{tikzpicture}

  \vspace{0.5cm}

  \caption{The \emph{straighten} move removes unnecessary bends
    in the link, and the \emph{push} move reduces unnecessary
    intersections with the 2-skeleton.}
  \label{fig: simplify}
\end{figure}

The first is \texttt{straighten}, which takes as input a tetrahedron
$\tau$ with barycentric arcs. It then checks for each arc $a$ in
$\tau$ if the pair of arcs $a$ and $b = a.\tt next$ can be replaced
with a single arc that runs from $a.\tt start$ and $b.\tt end$. The
check is that no other arc in $\tau$ has an interior intersection with
the triangle spanned by $a$ and $b$.  The other move is \texttt{push},
which removes unnecessary intersections with $\cT^2$.  When $a$ starts
on the same face $F$ that $b = a. \tt next$ ends on, it checks whether
any other arc intersects the triangle $a$ and $b$ span. If there are
none, the move replaces $a$ and $b$ with an arc in the tetrahedron
$\tau'$ glued to $\tau$ along $F$. This often produces a bend
that can then be removed by a straighten move.

\subsection{Putting the pieces together}

Let $\cT$ be a layered filling triangulation with arcs encoding the
core curves of the filling and let $(P_i)$ be simple Pachner moves
reducing $\cT$ to $\cT_0$. By factoring any $2\to0$ or $0\to2$ moves,
see Section \ref{sec: 2 to 0}, we can always turn a sequence of
Pachner moves into a sequence of simple Pachner moves.

Our process for producing a barycentric link
in $\cT_0$ that is isotopic to the initial $L$ is:

\begin{algothm}
  ${\tt with\_arcs[apply\_Pachner\_moves]}\big(\cT, (P_i)\big)$\\
  Start with $\cT' \assign \cT$ and loop over the $P_1, P_2, \ldots
  P_n$ as follows:
  \begin{enumerate}
  \item Apply ${\tt with\_arcs}[P_i]$ to $\cT'$ to get $P_i \cT'$ with
    arcs representing $L$.  Set $\cT' \assign P_i\cT'$.

  \item Loop over the tetrahedra $\tau$ in $\cT'$, applying $\tt push$
    and $\tt straighten$ until the arcs stabilize.
  \end{enumerate}
\end{algothm}

\subsection{Computational geometry issues}
\label{sec: CGAL}

Our algorithm requires many geometric computations with barycentric
arcs, e.g.~to test for one of our simplifying moves and to ensure we
do not violate the general position requirement of Section~\ref{sec:
  tris with curves}.  Difficult and subtle issues can arise here, and
much work has been done to ameliorate them; see \cite{Schirra2000} for
a survey.  We took the approach of having all coordinates in $\Q$ so
that we can do these computations exactly.  This entails a stiff
speed penalty and leads to points represented by rational numbers with
overwhelmingly large denominators.

We handle such denominators by rounding coordinates at each stage. As
long as the rounding process does not move the endpoint of an arc more
than one-fourth the minimum distance between any pair of arcs in the
link, the isotopy class of the link is unchanged by rounding, see
\cite[page 316]{Simon94}. We therefore guarantee correctness by
computing the minimum distance between arcs at every step of the
algorithm and then varying the rounding precision accordingly.

Alternatively, when the input manifold is hyperbolic, one can
generally efficiently certify correctness of the output diagram after
the fact by checking that its exterior is homeomorphic to the manifold
in part (\ref{item: ideal input}) of the input.  This is only
marginally faster than dynamically varying the rounding precision in
our current implementation, so the final version only uses the
approach that is independent of hyperbolicity.  However, this does
give us a completely independent check on the correctness of our code.


\section{Factoring the 2-to-0 move}
\label{sec: 2 to 0}
As mentioned in Section~\ref{sec: pachner arcs}, we factor each
$2 \to 0$ move into a sequence of $2\to3$ and $3\to2$ moves so that we
can carry along the barycentric link. This factorization is quite
delicate in certain unavoidable corner cases; we next outline our
method and then provide a detailed account in Section~\ref{app: 2 to
  0}. To begin to understand the $2\to0$ move, first consider its
inverse $0 \to 2$ move shown in Figure~\ref{fig: main birdseye}.  The
possible $0\to2$ moves in Figure~\ref{fig: main birdseye} correspond
to a pillow splitting open the \emph{book of tetrahedra} around the
edge $e$. Following \cite{Segerman2017}, we call this pillow a
\emph{bird beak} with upper and lower mandibles that pivot around the
two outside edges of the beak (viewed from above, these are the purple
and black vertices in the top right of Figure~\ref{fig: main
  birdseye}).  On both sides of the bird beak are \emph{half-books} of
tetrahedra, together forming a \emph{split-book}. When applying the
inverse $2\to0$ move, the two half-books combine to form a book of
tetrahedra assembled around the central edge.

The simplest $2\to0$ move is when there are two valence-2 edges that
are opposite each other on a single tetrahedron, as shown in
Figure~\ref{fig: main basecase}; equivalently, one of the half-books
has a single tetrahedron.  This \emph{base case} is handled by
Matveev's $V$ move, the composition of four $2\to3$ and $3\to2$ moves
of \cite[Figure 1.15]{Matveev2007}. To reduce other instances of the
$2\to0$ move to the base case, we rotate a mandible of the bird beak,
moving tetrahedra from one half-book to the other until one contains
only a single tetrahedron. Because the tetrahedra in the split-book
may repeat or be glued together in strange ways, this is rather
delicate.  When things are sufficiently embedded, Segerman
\cite{Segerman2017} showed:

\begin{figure}
  \centering
  \begin{tikzoverlay}[width=0.55\textwidth]{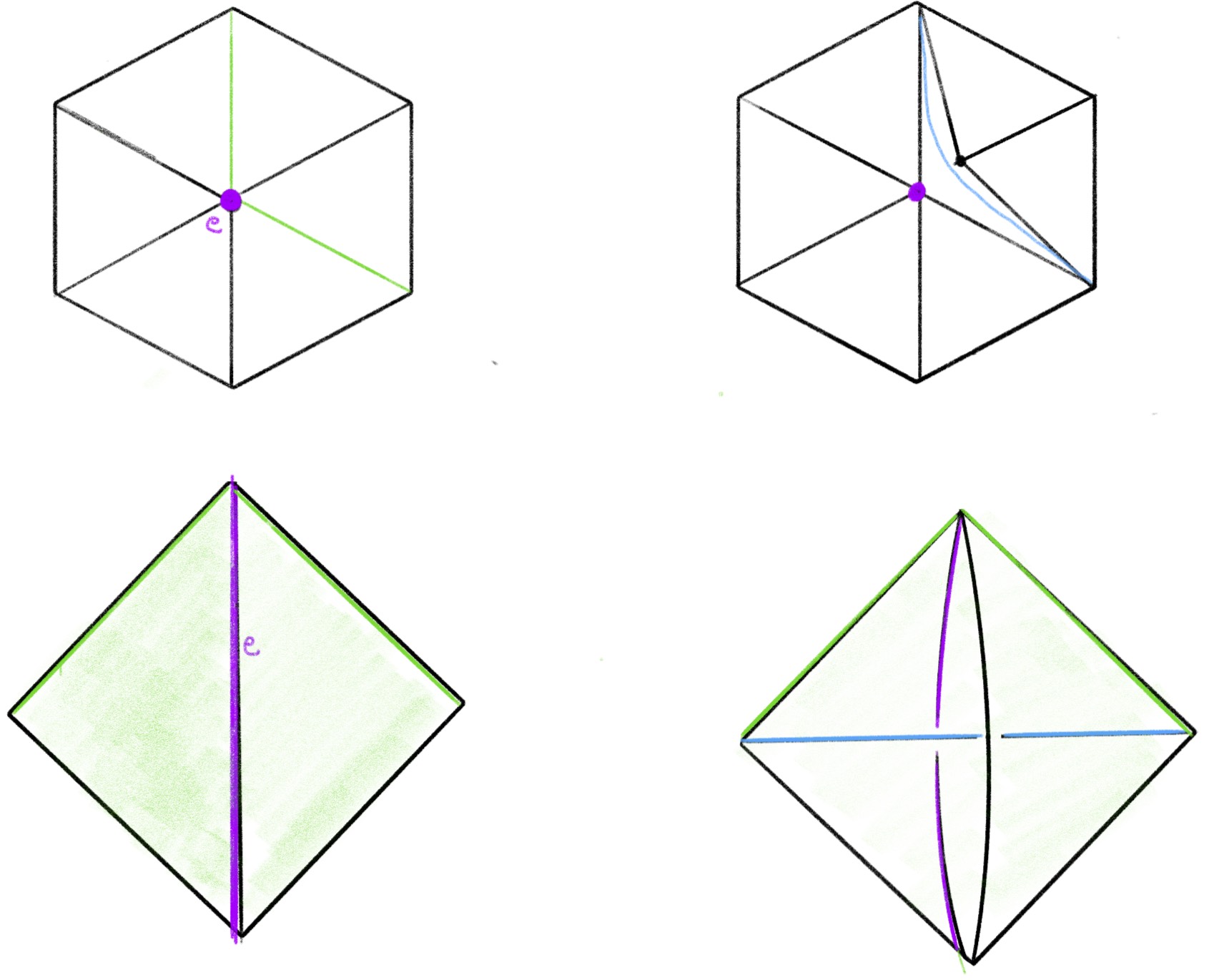}
    \begin{scope}[line width=0.7pt, ->]
      \draw (42.0,64.3) -- +(13, 0);
      \draw (42.0,19.7) -- +(13, 0);
    \end{scope}
  \end{tikzoverlay}

  \vspace{-0.25cm}

  \caption{At top, a cross section of a $0\to2$ move; at bottom is a
    close-up of the inflation of the pillow.  The move is performed on
    the pair of green faces meeting along the purple edge $e$ at left.
    The resulting pillow is a \emph{bird beak}, which splits open the
    book of tetrahedra about $e$. In the top right, the purple
    and black dots give edges that join together above and below the
    cross section. }
     \label{fig: main birdseye}
\end{figure}

\begin{figure}
  \centering
  \begin{tikzoverlay}[width=0.55\textwidth]{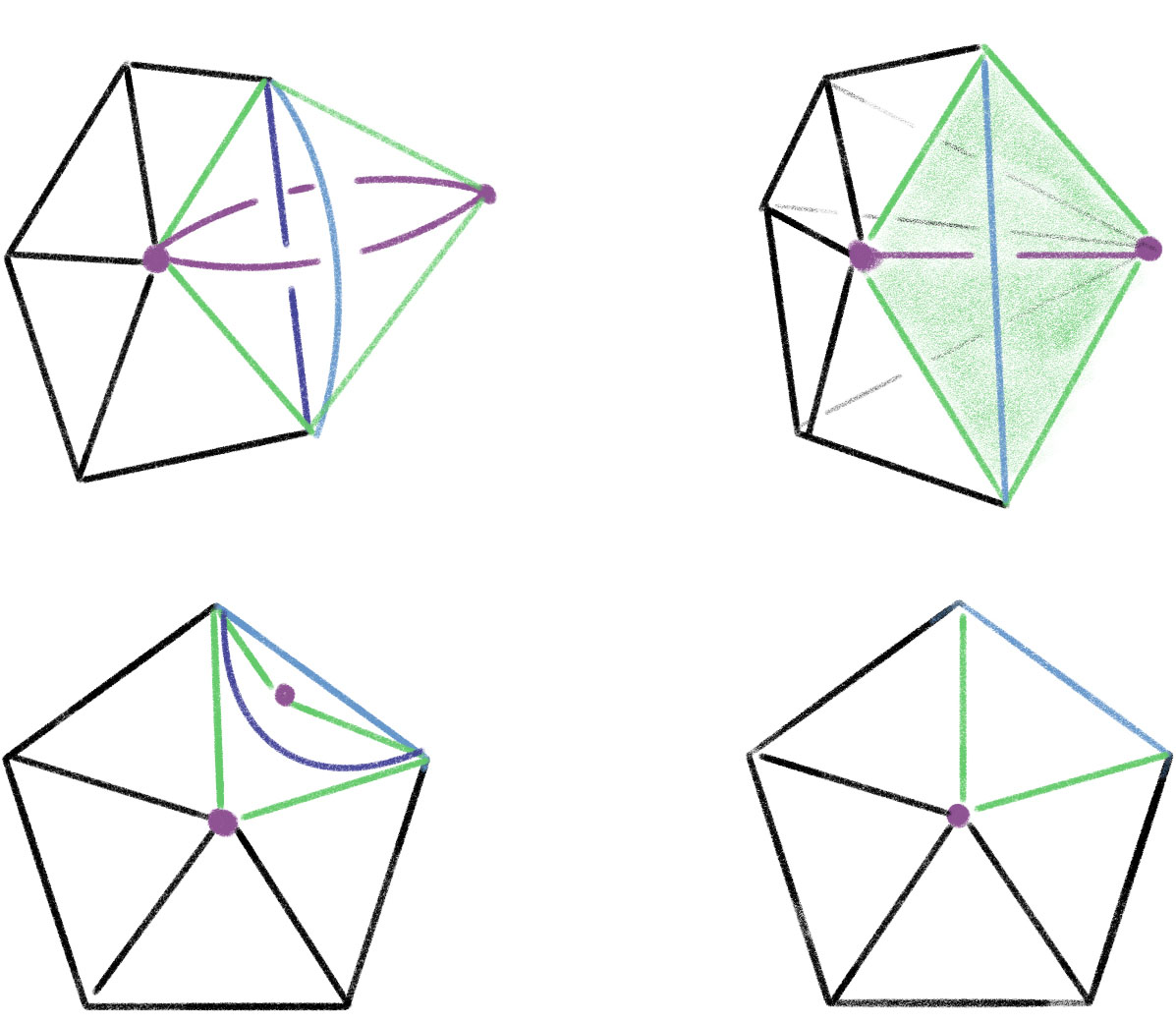}
    \begin{scope}[line width=0.7pt, ->]
      \draw (40.0, 58.9) -- +(17.5, 0);
      \draw (40.0, 11) -- +(17.5, 0);
    \end{scope}
  \end{tikzoverlay}

  \caption{The base case of the $2\to0$ move at top with
    the cross section at bottom.}
  \label{fig: main basecase}
\end{figure}

\begin{proposition}
\label{prop: linear rotation}
Suppose $e$ is a valence-2 edge where the half-books
adjacent to the bird beak are embedded and contain $m$ and $n$
tetrahedra respectively.  Then the $2 \to 0$ move can be implemented
by at most $2 \cdot \min(m, n) + 2$ basic $2 \to3$ and $3\to2$ moves.
\end{proposition}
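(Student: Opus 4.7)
The plan is to induct on $k = \min(m, n)$. For the base case $k = 1$, one of the half-books consists of a single tetrahedron, which is exactly the configuration of Figure~\ref{fig: main basecase}. There, Matveev's $V$ move accomplishes the $2 \to 0$ in exactly $4 = 2 \cdot 1 + 2$ basic $2 \to 3$ and $3 \to 2$ moves, matching the stated bound.

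For the inductive step, suppose $k \ge 2$ and assume without loss of generality that $m \le n$. I would build a \emph{mandible rotation}: a single $2 \to 3$ move followed by a single $3 \to 2$ move that transfers one tetrahedron $T$ from the smaller half-book into the larger one while preserving the valence-2 edge $e$ and the bird-beak structure. Concretely, choose the tetrahedron $T$ in the smaller half-book that sits against one of the mandibles of the bird beak; $T$ shares a face with that mandible tetrahedron. A $2 \to 3$ move on this shared face replaces the pair $\{T, \text{mandible}\}$ by three tetrahedra around a new central edge. One of the edges in the resulting configuration is then valence-3, and the $3 \to 2$ move collapsing it regroups the pieces so that $T$ now lies on the opposite side of the beak. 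After this pair of moves the valence-2 edge $e$ and the beak persist, but the half-book sizes have become $(m-1, n+1)$, with new minimum $k - 1$. The inductive hypothesis applies to the new configuration and provides $2(k-1) + 2 = 2k$ basic moves to complete the $2 \to 0$; adding the 2 moves of the rotation gives a total of $2k + 2 = 2 \min(m, n) + 2$, as required. Iterating the rotation one sees equivalently that $k - 1$ rotations reduce to the base case, giving $2(k - 1) + 4 = 2k + 2$ moves.

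The hard part will be making the mandible rotation rigorous in the semi-simplicial setting, and here is where the embeddedness hypothesis does the work. I would need to verify three things at each rotation: that the two tetrahedra $T$ and the mandible are genuinely distinct and their shared face is not identified with another face of either (so the $2 \to 3$ move is admissible); that the $3 \to 2$ move that follows has a genuinely valence-3 edge to collapse; and that after rotation the split-book still consists of two embedded half-books, so that the induction applies to the next stage. All three points follow from the assumption that the original half-books are embedded, together with the observation that the only non-trivial identifications around the bird beak occur along its two mandible edges and at $e$ itself, and these identifications are preserved rather than created by the rotation. This mirrors Segerman's analysis in \cite{Segerman2017}; the remaining case-checking, including what happens when $T$ meets the upper versus lower mandible and the parity of the rotation, is the sort of routine verification I would carry out in \AppendixRef{app: 2 to 0}{C} rather than here.
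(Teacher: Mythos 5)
Your argument is correct and is essentially the paper's proof: a mandible rotation realized by one $2\to3$ move and one $3\to2$ move (this is exactly the pair of basic moves from Figure~11 of \cite{Segerman2017}), applied $\min(m,n)-1$ times to reduce to the base case, which Matveev's $V$ move handles in four moves, giving $2\min(m,n)+2$ in total. Casting it as an induction and sketching the rotation directly rather than citing Segerman is only a cosmetic difference.
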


\begin{proof}
We can rotate a mandible by one tetrahedron using the two basic moves of
\cite[Figure~11]{Segerman2017}.  With $\min(m, n) - 1$ such rotations
we can reduce the smaller of the half-books to a single tetrahedron.
As already noted, the base case can be done in four moves.
\end{proof}

\begin{remark}
One cannot in general factor a $2 \to 0$ move into a
sublinear number of $2\to3$ and $3\to2$ moves: the $2 \to 0$ move
amalgamates two edges of valence $m + 1$ and $n + 1$ into a single
edge of valence $m + n$, and each $2\to3$ or $3\to2$ move only
changes valences by a total of 12 (counting with multiplicity).
\end{remark}

\begin{figure}
  \centering
  \begin{tikzoverlay}[width=0.75\textwidth]{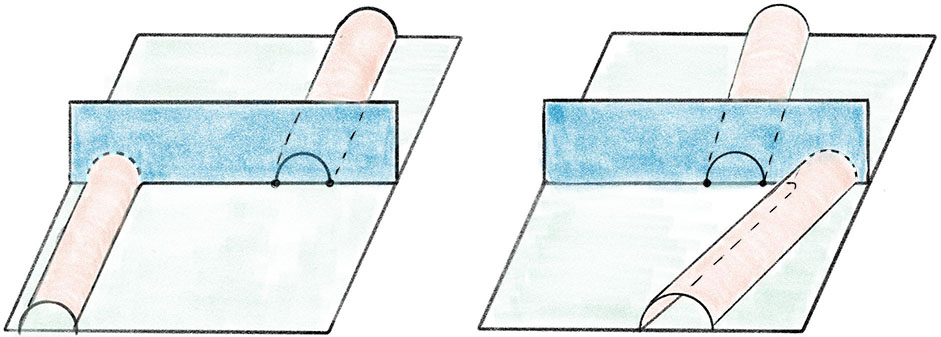}
    \draw[->, line width=0.8pt] (45.0,16.5) -- (54.9,16.5);
  \end{tikzoverlay}

  \vspace{0.5cm}

  \caption{The \emph{endpoint-through-endpoint} move in a special
    spine.}
  \label{fig: main end to end}
\end{figure}

The tricky case is when additional faces of the bird beak are glued to
each other.  There are two fundamentally different ways for this to
happen, shown in Figures~\ref{fig: twist1} and~\ref{fig: twist2} of
Section~\ref{app: 2 to 0}.  The untwisting of these extremely
confusing arrangements is done by the endpoint-through-endpoint move
of Figure~\ref{fig: main end to end}, which is in the dual language of
special spines from Section~\ref{app: 2 to 0}. Matveev's
factorization of the endpoint-through-endpoint move is described in
Figure 1.19 of \cite{Matveev2007}. We simplify this factorization from
14 moves to 6; the key is Proposition~\ref{prop: just four
  moves}. This simplification was essential for determining the exact
sequence of moves needed to factor the $2\to0$ move. Dual to the
endpoint-through-endpoint move are a pair of \emph{untwist the beak}
moves, one for each of the situations in Figures~\ref{fig: twist1}
and~\ref{fig: twist2}, see Section~\ref{app: 2 to 0}. We can
thus factorize the $2\to0$ move as follows:

\begin{algothm} ${\tt factor[2\to0]}$
\begin{enumerate}
\item \label{item: main base case}
  If we are in the base case, do the sequence of moves
  in the triangulation dual to the factorization of the $V$ move in
  Figure 1.15 of \cite{Matveev2007} and exit.

\item If we are in the twisted cases described by Figures~\ref{fig:
    twist1} and~\ref{fig: twist2} in Section~\ref{app: 2 to 0}, do
  the appropriate untwist the beak move. Otherwise, rotate the
  mandible by one tetrahedron.

\item  Go to to step \ref{item: main base case}.

\end{enumerate}
\end{algothm}


\subsection{Dealing with twisted beaks using special spines}
\label{app: 2 to 0}

In this section, we detail how to
handle the situation where additional faces of the bird beak are
identified, which can happen because the tetrahedra in the split-book,
shown in the upper-right of Figure~\ref{fig: main birdseye}, may
repeat or be glued together in strange ways.  To handle this delicate
issue, we need to take the dual viewpoint.

Dual to a triangulation is a \emph{special spine}. A \emph{spine} of a
compact 3-manifold $M$ with nonempty boundary is a polyhedron $P$ such
that $M$ collapses to $P$. A spine of a closed 3-manifold $M$ is a
spine of the complement of an open ball in $M$. Any triangulation
$\cT$ of $M$ determines a dual cellulation $\cS$ whose 2-skeleton is a
spine, see Figure 1.5 of \cite{Matveev2007}; the class of such spines
are the \emph{special spines},

The Pachner moves correspond to dual moves in the special
spine. Generally, it is easier to reason about the dual moves
modifying special spines, as there is a type of graphical
calculus. The move on a special spine dual to the $2\to3$ is called
the $T$ move and its inverse dual to the $3\to2$ move is denoted
$T^{-1}$, see Figure~\ref{fig: T move}. The special spine move
corresponding to the $0\to2$ move is the \emph{lune} move shown in
Figure~\ref{fig: L move}, and is denoted $L$. The move dual to $2\to0$
is the inverse lune move $L^{-1}$. It is also useful to have a certain
compound move, Matveev's $V$ move, shown in Figure~\ref{fig: V move},
which is a special case of the lune move that can be realized as a
sequence of four $T$ and $T^{-1}$ moves \cite[Figure
1.15]{Matveev2007}. Matveev showed that generally the $0\to2$ and
$2\to0$ moves can be factored as a sequence of $2\to3$ and $3\to2$
moves. In this section, we describe a simplification of one part of
Matveev's factorization, reducing the number of moves necessary for
that part from 14 to 6.

\begin{figure}
  \centering
  \includegraphics[width=0.55\textwidth]{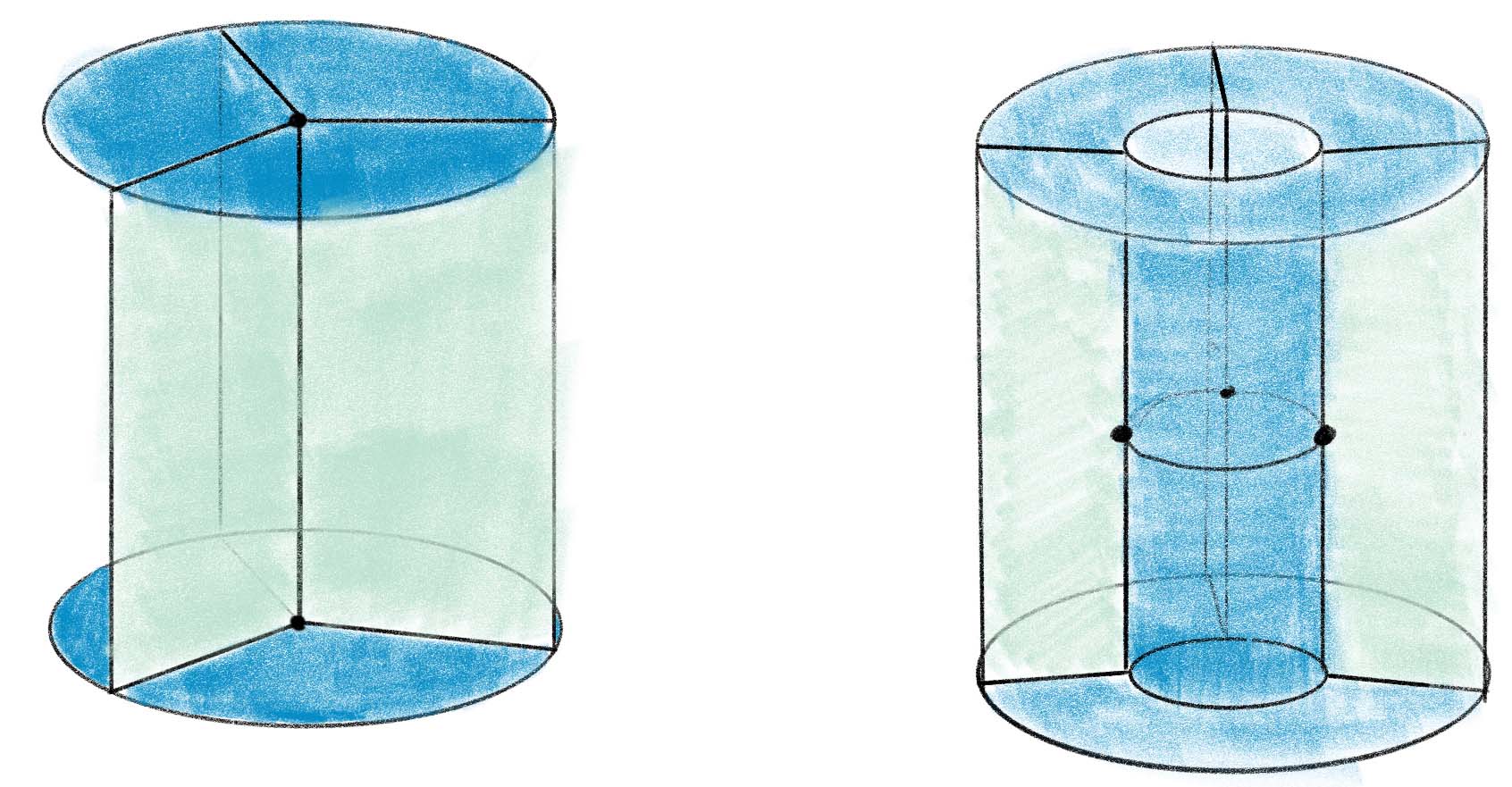}
  \caption{The $T$ move in the special spine dual to the $2\to3$ move.}
  \label{fig: T move}
\end{figure}

\begin{figure}
  \centering
  \includegraphics[width=0.56\textwidth]{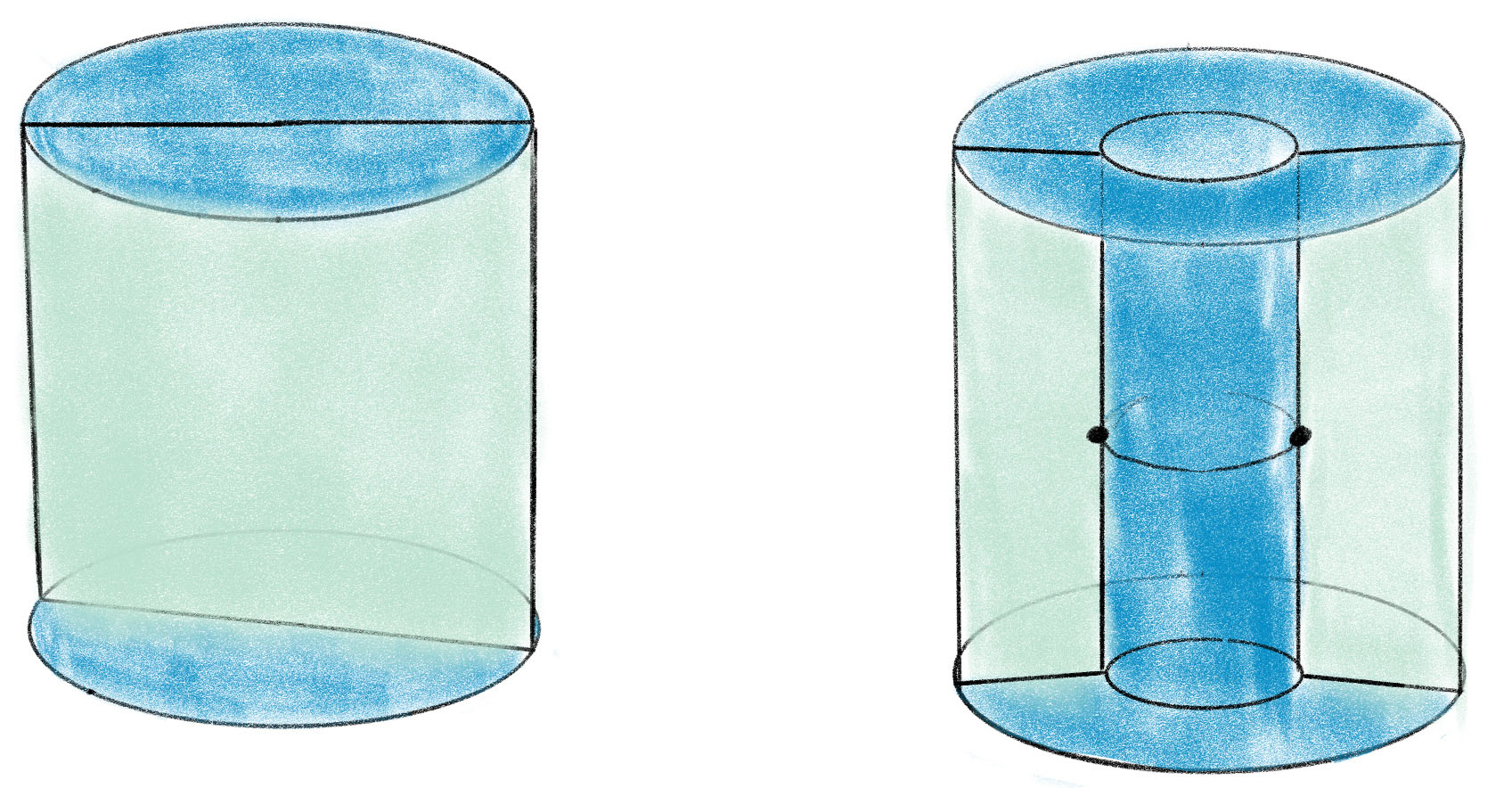}
  \caption{The lune move $L$ in the special spine dual to the $0\to2$ move.}
  \label{fig: L move}
\end{figure}

\begin{figure}
  \centering
  \includegraphics[width=0.75\textwidth]{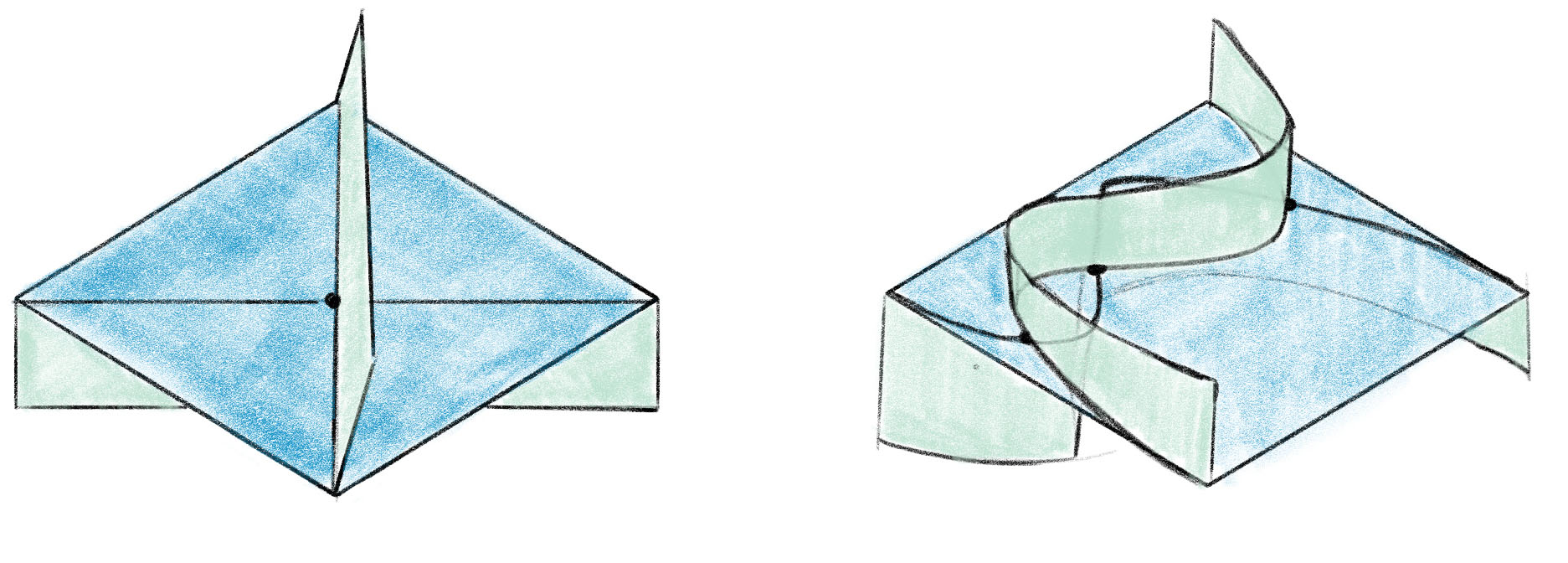}

  \vspace{-0.5cm}
  
  \caption{Matveev's V move.}
  \label{fig: V move}
\end{figure}

\begin{figure}[!tbp]
  \centering
  \begin{tikzoverlay}[width=0.9\textwidth]{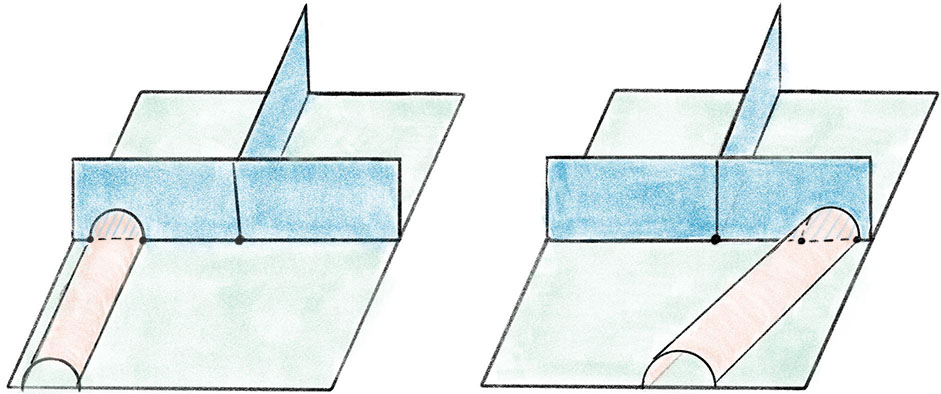}
    \draw[->, line width=0.8pt] (45.0,16.5) -- (54.9,16.5);
  \end{tikzoverlay}

  \caption{The \emph{endpoint-through-vertex} move on a special
    spine. There are three vertices of the singular locus in both
    configurations. This is dual to the move rotating a mandible. }
  \label{fig: end to vertex}
\end{figure}

The first move we need to transfer tetrahedra from one half-book to
the other is the dual version of the endpoint-through-vertex move in
Figure \ref{fig: end to vertex}. The endpoint-through-vertex move
involves a single $T$ and $T^{-1}$ move. When the split-book is not
twisted in strange ways, this move suffices to reduce to the base
case. A thorough account of this move, describing both the
triangulation and special spine versions, is given in
\cite{Segerman2017}.

The special spine move needed to deal with twisted half-books is the
endpoint-through-endpoint move pictured in Figure~\ref{fig: end to
  end}. A factorization of the endpoint-through-endpoint move is given
in Figure 1.19 of \cite{Matveev2007}, and it is this factorization
that we simplify from 14 to 6 moves.  Converting from the spine
version of the endpoint-through-endpoint move to the triangulation
version is quite subtle, so reducing the number of moves in the
factorization is incredibly helpful.

\begin{figure}
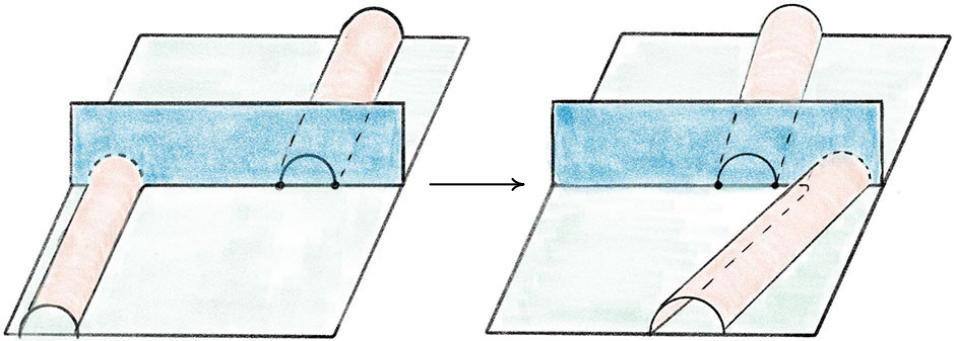

  \centering
  \begin{tikzoverlay}[width=0.9\textwidth]{end2end}
    \begin{scope}[font=\small]
      \draw[->, line width=0.8pt] (45.0,16.5) -- (54.9,16.5);
    \end{scope}
  \end{tikzoverlay}

  \caption{The \emph{endpoint-through-endpoint} move in a special
     spine. There are only two vertices of the singular locus in each
     picture. You can walk through the ``tunnel''
     starting in the lower left and end up ``behind'' the vertical
     blue wall; in contrast, the back tunnel dead-ends into the vertical
     blue wall.  This move is needed to reduce to the base case when the
     tetrahedra in the split-book are twisted up in various ways. We
     implement this move for the dual triangulation as a sequence of
     six $2\to3$ and $3\to 2$ moves.}
     \label{fig: end to end}
\end{figure}
Our simplification of the factorization is in fact a simplification of
one part of the endpoint-through-endpoint move. Specifically, the
factorization in Figure 1.19 of \cite{Matveev2007} consists of three
stages.  The first stage uses the move sequence $V, T, T^{-1}$, the
second the sequence $T, T^{-1}$, and the final stage is, up to taking
the mirror image, the inverse of the first stage.  Expanding $V$ to
its four constituent $T$ and $T^{-1}$ moves means both the first and
third stages require six $T$ and $T^{-1}$ moves each.  However, the
next proposition shows that one only needs two $T$ moves for each of
these stages, saving us eight moves overall:

\begin{proposition}
  \label{prop: just four moves}
  The below move on a special spine can be achieved by two $T$ moves:
  {

  \centering
    \begin{tikzoverlay}[width=.85\textwidth]{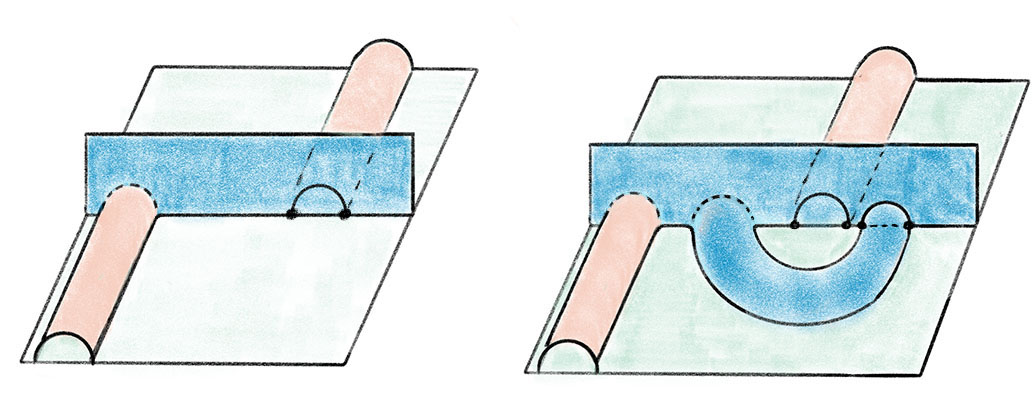}
      \begin{scope}[nmdstd, font=\small]
        \draw[->, line width=0.8pt] (42.2,17.9) -- +(11, 0);
        \node[] at (30.8,20.4) {$\alpha$};
        \node[] at (79.6,19.7) {$\alpha$};
        \node[] at (85.9,19.0) {$\mu$};
      \end{scope}
    \end{tikzoverlay}

  }
  \noindent
  There are two vertices initially, and four at the end.  You can
  enter any of the lower tunnels from the top-left green region; for
  the U-tunnel at right you would reach a dead-end at the vertical blue wall
  at $\mu$.
\end{proposition}

\begin{proof}
Zooming in on the part of the figure near the vertices, we see:

{
\centering
  \begin{tikzpicture}[nmdstd, font=\small]
    \node[above right] at (0, 0) {
      \begin{tikzoverlay}[width=.4\textwidth]{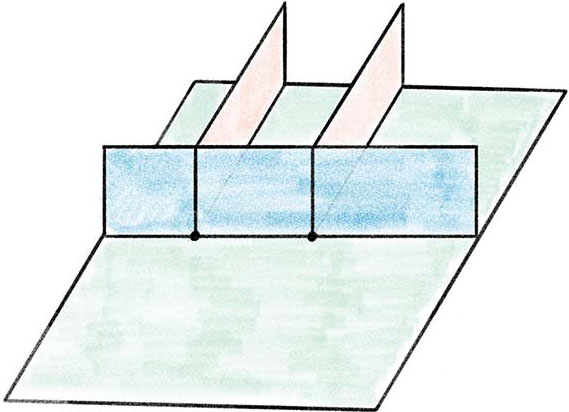}
        \node[] at (23.5,35.0) {$\delta$};
        \node[] at (40.9,35.0) {$\alpha$};
        \node[] at (65.5,35.0) {$\epsilon$};
        \node[] at (40,12.5) {$\gamma$};
      \end{tikzoverlay}};

    \node[above right] at (6, 0) {
      \begin{tikzoverlay}[width=.4\textwidth]{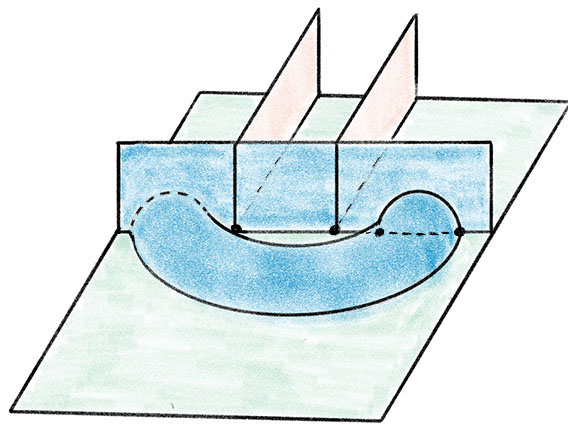}
        \node[] at (26.5,41.5) {$\delta$};
        \node[] at (48.5,41.5) {$\alpha$};
        \node[] at (70.5,41.5) {$\epsilon$};
      \end{tikzoverlay}};

      \draw[->, line width=0.7pt] (5.25, 2) -- +(1.5, 0);
    \end{tikzpicture}
}
  
\noindent
We can then realize the move as follows:

\begin{enumerate}
\item Rotate the walls left of $\alpha$:

  \vspace{-0.0cm}
  
  \hspace{4cm}%
  \begin{tikzoverlay}[width=.35\textwidth]{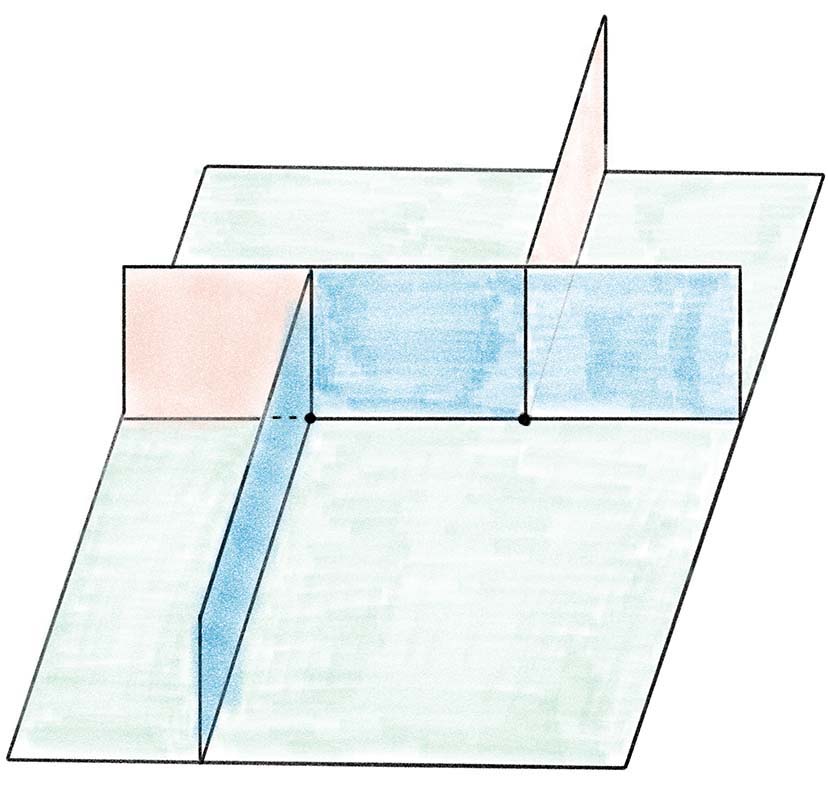}
    \begin{scope}[font=\small]
      \node[] at (32.5,38.6) {$\delta$};
      \node[] at (50.6,53.4) {$\alpha$};
      \node[] at (55.6,24.5) {$\gamma$};
      \node[] at (76.4,54.2) {$\epsilon$};
    \end{scope}
  \end{tikzoverlay}

  \vspace{0.25cm}
  
\item Fold so $\alpha, \varepsilon, $ and $\gamma$ are in the same
  plane:

\vspace{-0.25cm}
  
\begin{center}
  \begin{tikzoverlay}[width=.4\textwidth]{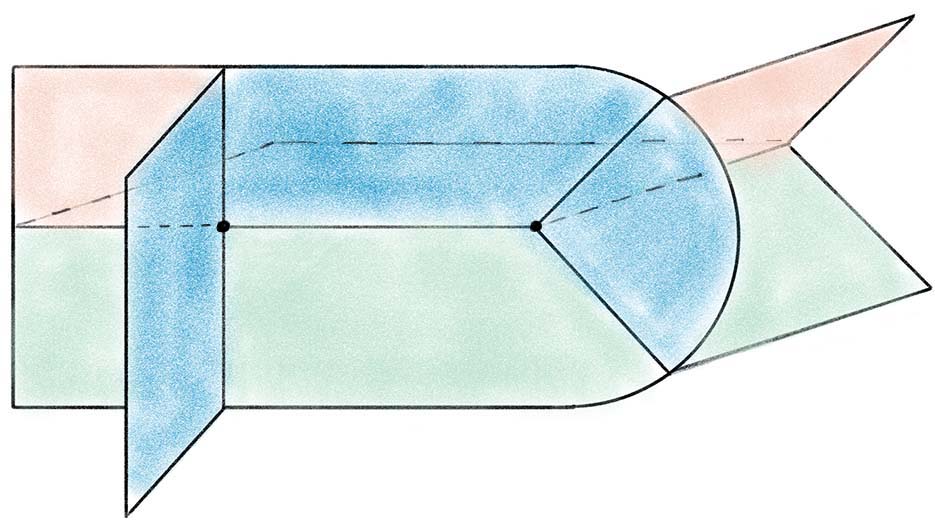}
    \begin{scope}[font=\small]
      \node[] at (18.6,25.6) {$\delta$};
      \node[] at (41.8,44.1) {$\alpha$};
      \node[] at (43.3,16.6) {$\gamma$};
      \node[] at (69.6,31.6) {$\epsilon$};
    \end{scope}
  \end{tikzoverlay}
\end{center}

\item Follow Figure 1.15 of \cite{Matveev2007} to obtain:

  \begin{center}
    \begin{tikzoverlay}[width=.9\textwidth]{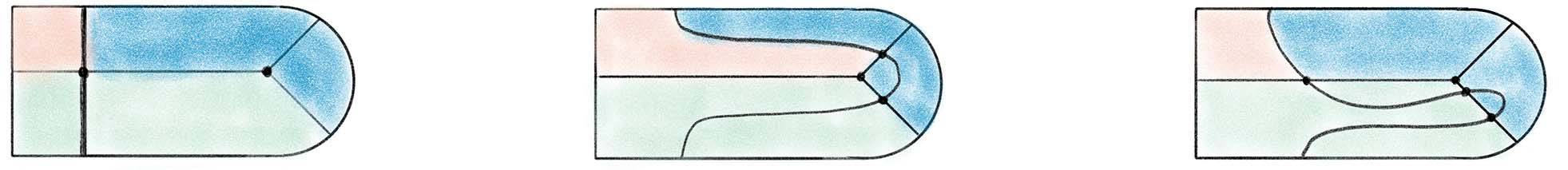}
      \begin{scope}[nmdstd, line width=0.8pt, font=\small]
        \draw[->, line width=0.5pt] (13, -3) to [bend left=25] (5.6, 0.0);
        \node[right] at (13, -3) {$\delta$ viewed head-on};
        \node[] at (11.4,8.7) {$\alpha$};
        \node[] at (11.8,3.4) {$\gamma$};
        \node[] at (20.4,6.3) {$\epsilon$};
        \draw[->] (26.6,6.1) -- node[above] {$T$} +(8, 0);
        \draw[->] (64.0,6.1) -- node[above] {$T$} +(8, 0);
        \node[] at (88.2,8.2) {$\alpha$};
      \end{scope}
    \end{tikzoverlay}
  \end{center}

\item Redraw as:

  \vspace{-0.5cm}
  \hspace{3cm}%
  \begin{tikzoverlay}[width=.5\textwidth]{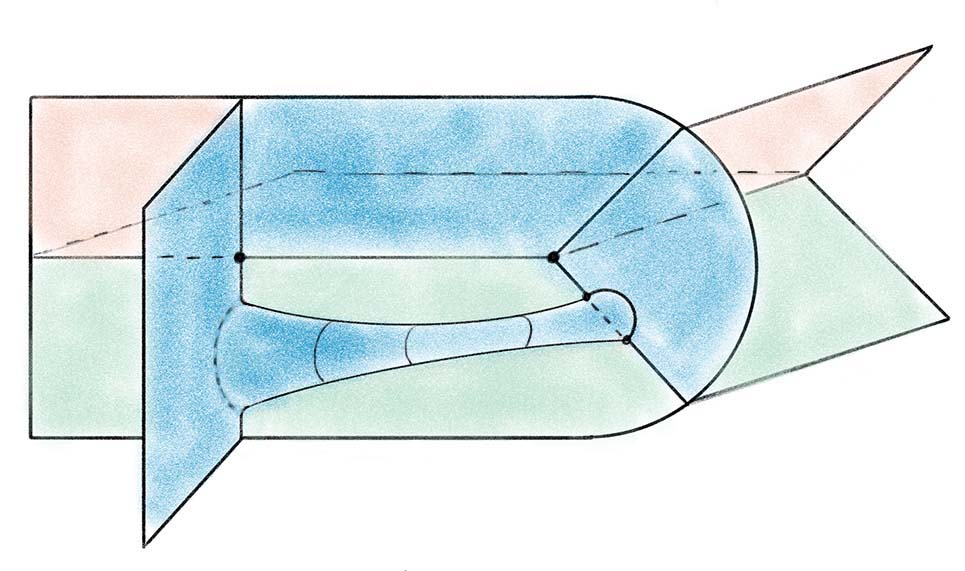}
    \begin{scope}[font=\small]
      \node[] at (20.0,28.9) {$\delta$};
      \node[] at (41.8,45.6) {$\alpha$};
      \node[] at (70.7,32.5) {$\epsilon$};
    \end{scope}
  \end{tikzoverlay}

\item Reverse the folds to obtain:

\vspace{-0.5cm}
  
\begin{center}
  \begin{tikzoverlay}[width=.4\textwidth]{part5-zoom}
    \begin{scope}[font=\small]
      \node[] at (29.6,45.2) {$\delta$};
      \node[] at (52.5,45.2) {$\alpha$};
      \node[] at (73.2,45.2) {$\epsilon$};
    \end{scope}
  \end{tikzoverlay}
\end{center}

\end{enumerate}
This completes the proof.
\end{proof}

\begin{corollary}
  \label{cor: end to end}
  The endpoint-through-endpoint move can be completed as a sequence of
  6 moves using the moves $T$ and $T^{-1}$ dual to the $2\to 3$ and
  $3\to2$ moves.
\end{corollary}

\begin{proof} Following Figure 1.19 in \cite{Matveev2007}, the
endpoint-through-endpoint move is obtained doing the move in
Proposition~\ref{prop: just four moves}, then doing one $T$
and one $T^{-1}$ move, and ending with the inverse of
Proposition~\ref{prop: just four moves}.
\end{proof}

We now need to dualize the endpoint-through-endpoint move to the
triangulation setting. We call the dual version of the
endpoint-through-endpoint move the \emph{untwist the beak}
move. Suppose we have a bird beak in a split-book of tetrahedra. There
are two possible twisted complexes contained in a half-book that can
arise, preventing us from rotating the mandible using the dual
endpoint-through-vertex move. Both of these are overcome using the
untwist the beak move.

In the first twisted case, there are at least two distinct tetrahedra
glued to the front of the pillow, and the back of the pillow is glued
to itself as indicated in Figure \ref{fig: twist1}, producing a solid
torus. In this case, the half-book around the front edge and the
half-book around the back edge overlap. In Figure \ref{fig: twist1}, the
leftmost and rightmost tetrahedra in the front half-book, labeled
$X$ and $Y$, appear in both the front and back half-books.

The other possible twisted case involves a front face of the pillow
being glued to the back face of the pillow, as indicated in Figure
\ref{fig: twist2}. In this case, the front tetrahedron of the pillow
is part of the half-book around the back edge, and the back tetrahedron
of the pillow is part of the half-book around the front edge.

\begin{figure}
  \centering
  \begin{tikzoverlay}[width=.65\textwidth]{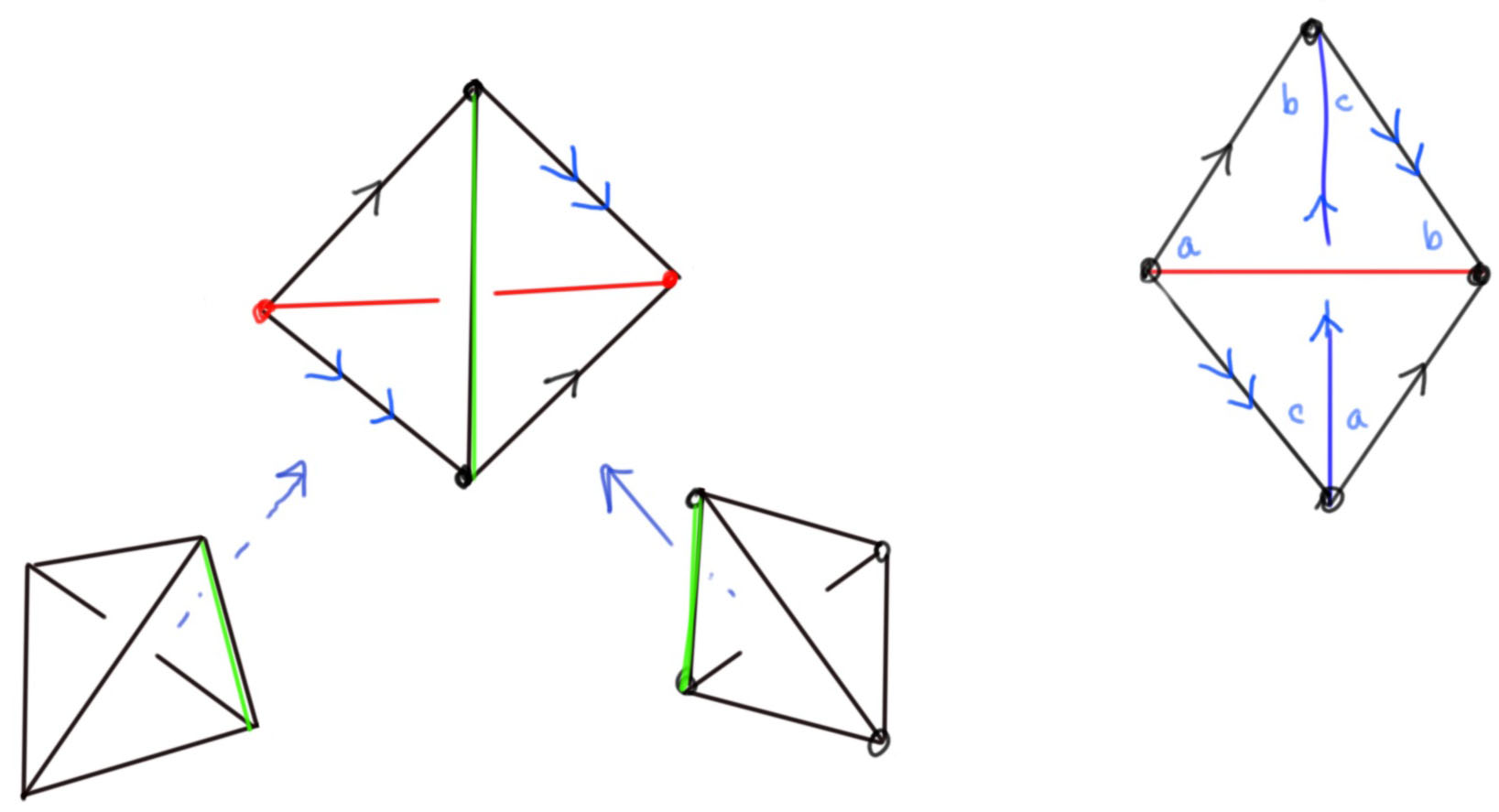}
    \begin{scope}[font=\small]
      \node[] at (10.6,1.2) {$X$};
      \node[below right] at (17.3,5.9) {$0$};
      \node[above right] at (13.4,18.4) {$1$};
      \node[above left] at (1.9,16.4) {$3$};
      \node[below left] at (1.5,1.1) {$2$};

      \node[] at (50.9,3.2) {$Y$};
      \node[below left] at (45.8,8.4) {$0$};
      \node[above left] at (46.5,21.0) {$1$};
      \node[above right] at (59.0,17.5) {$2$};
      \node[below right] at (58.5,4.7) {$3$};

      \node[] at (20.5,45.0) {$F$};
      \node[below=0.5] at (31.0,22.4) {$0$};
      \node[above=0.5] at (31.6,48.1) {$1$};
      \node[left=0.3] at (17.3,33.5) {$2$};
      \node[right=0.5] at (44.7,35.7) {$3$};

      \node[] at (76.9,47.1) {$B$};
      \node[above=0.5] at (87.4,52.2) {$0$};
      \node[below=0.5] at (88.6,20.9) {$1$};
      \node[left=0.5] at (76.4,36.2) {$2$};
      \node[right=0.5] at (98.5,36.2) {$3$};
    \end{scope}
  \end{tikzoverlay}

  \caption{One of the possible twisted complexes in a split-book of
    tetrahedra.  Here, the tetrahedron $F$ sits in front of $B$
    forming the valence-2 edge; said edge connects vertices 2 and 3 in
    both tetrahedra, with the pair of opposite edges joining vertices
    0 and 1 forming the ``equator'' around it.  The back tetrahedron
    $B$ forms a solid torus with its back two faces glued as indicated
    by the labels $\{a, b, c\}$.  The face opposite vertex 2 in
    tetrahedron $X$ is glued to the face of $F$ opposite vertex 3 so
    that the two edges connecting vertices 0 and 1 coincide; the
    tetrahedron $Y$ is glued to $F$ analogously.
  }
\label{fig: twist1}
\end{figure}

\begin{figure}
  \centering
  \begin{tikzoverlay}[width=.65\textwidth]{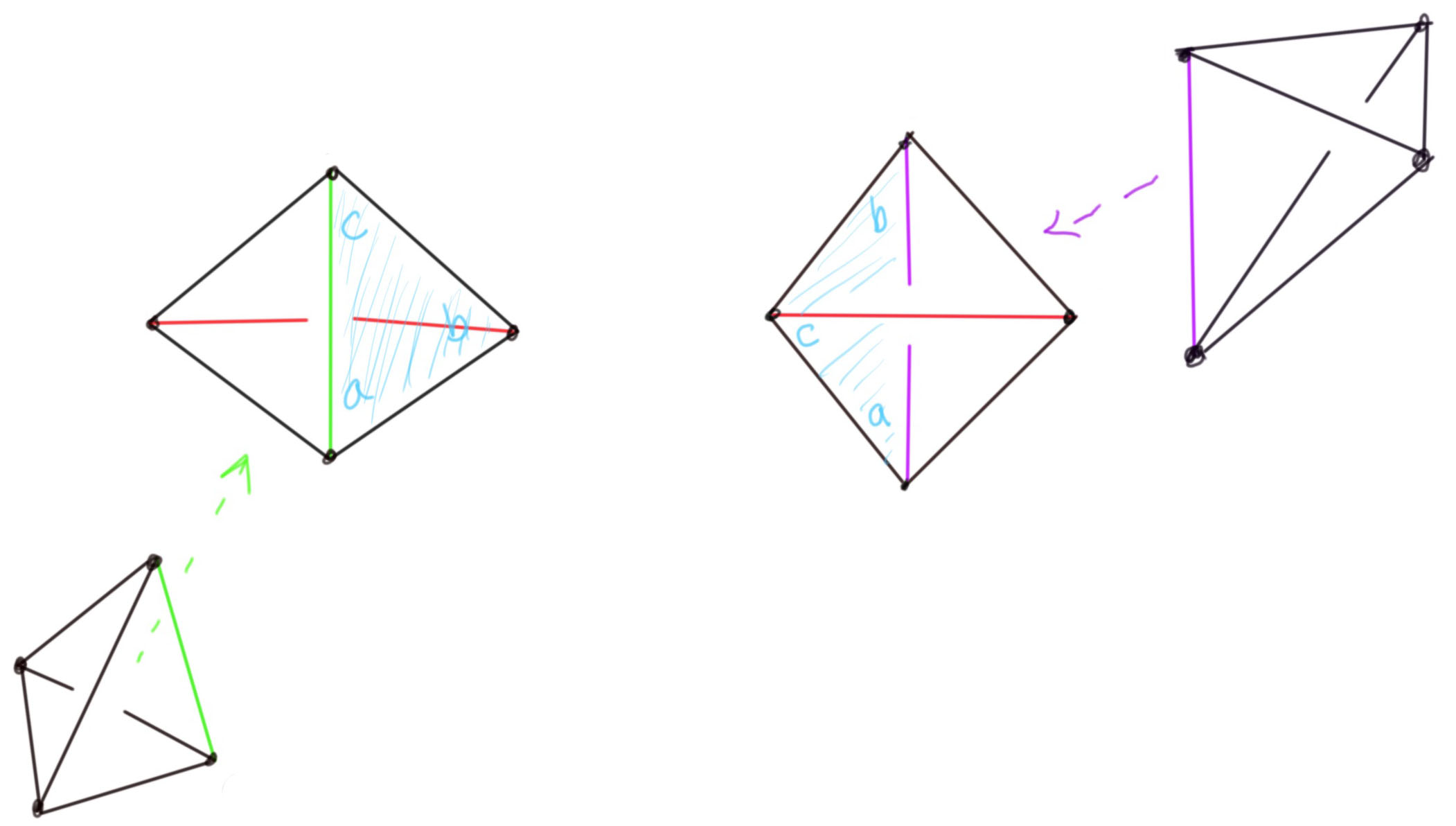}
    \begin{scope}[font=\small]
      \node[] at (9.9,-1.5) {$X$};
      \node[below right] at (14.5,4.9) {$0$};
      \node[above] at (10.2,18.8) {$1$};
      \node[below left] at (2.6,1.3) {$2$};
      \node[above left] at (1.3,10.0) {$3$};

      \node[] at (14.1,44.0) {$F$};
      \node[below=0.5] at (22.5,25.6) {$0$};
      \node[above=0.5] at (22.8,45.1) {$1$};
      \node[left=0.5] at (10.5,34.3) {$2$};
      \node[right=0.5] at (35.1,33.7) {$3$};

      \node[] at (53.8,44.3) {$B$};
      \node[above=0.5] at (62.4,47.1) {$0$};
      \node[below=0.5] at (62.0,23.5) {$1$};
      \node[left=0.5] at (52.8,35.1) {$2$};
      \node[right=0.3] at (73.5,35.1) {$3$};

      \node[] at (93,35.3) {$Y$};
      \node[above=0.7] at (81.4,52.9) {$0$};
      \node[below=0.7] at (81.8,32.3) {$1$};
      \node[below right] at (98.4,46.5) {$2$};
      \node[above right] at (97.8,55.1) {$3$};
    \end{scope}
    \end{tikzoverlay}
    \caption{Another possible twisted complex in a split-book of
      tetrahedra.  The tetrahedron $F$ sits in front of $B$ forming
      the valence-2 edge as in Figure~\ref{fig: twist1}. These
      tetrahedra share a third pair of faces (shaded with corners
      labeled $\{a, b, c\}$) which must be offset as shown as
      otherwise the $2\to0$ move would change the topology. Here, the
      back face of $X$ is glued to the unshaded front face of $F$ so
      that the vertices 0 and 1 coincide; similarly, the front face of
      $Y$ is glued to unshaded back face of $B$ so that the vertices 0
      and 1 coincide.}
    \label{fig: twist2}
\end{figure}

For both possible twisted cases, the particular sequence of $2\to3$
and $3\to2$ moves corresponding to factorization of the
endpoint-through-endpoint move in Corollary \ref{cor: end to end} was
found by searching possible sequences, using the valences of various
edges as a guide. More details on the sequence can be found in the
implementation, see the file \texttt{mcomplex\_with\_expansion.py} in
\cite{CodeAndData}. The factorization of the $2\to0$ described at the
end of Section \ref{sec: 2 to 0} then follows.


\section{Building the initial diagram}
\label{sec: embed}

\begin{figure}
  \centering
  \begin{tikzpicture}[nmdstd]
    \node[above right] at (0.65, 0.85) {\includegraphics[height=5.36cm]{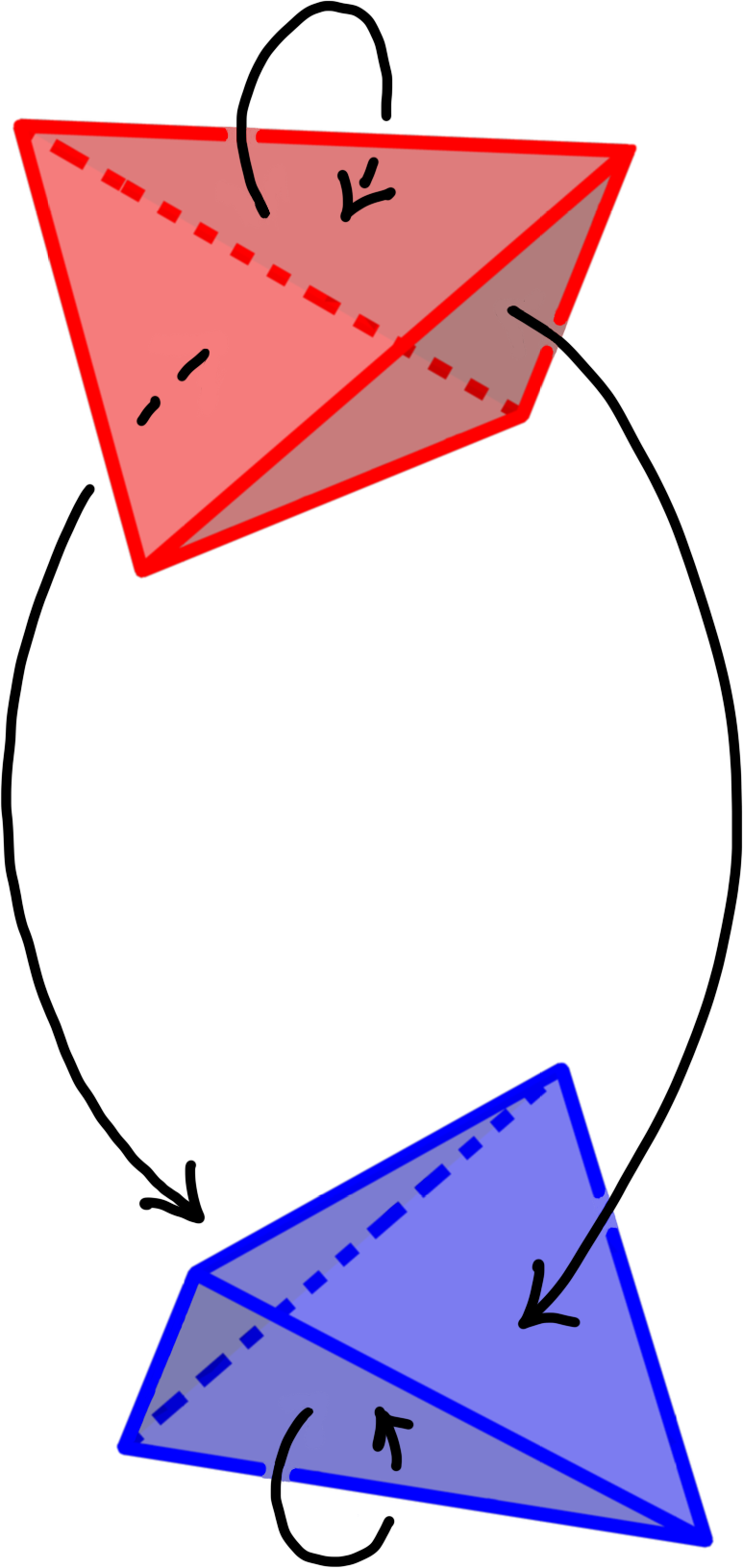}};
    \node at (1.15, 2) {$b_2$};
    \node at (1.1, 1.2) {$b_1$};
    \node at (3.4, 0.9) {$b_3$};
    \node at (2.8, 2.92) {$b_0$};

    \node at (0.6, 5.95) {$a_3$};
    \node at (3.2, 5.85) {$a_2$};
    \node at (1.25, 4.15) {$a_0$};
    \node at (2.6, 4.7) {$a_1$};

    \node at (1.9, 6.7) {$a_{023} \to a_{123}$};
    \node at (2.1, 0.5) {$b_{213} \to b_{013}$};

    \node at (0.4, 3.5) {\begin{tikzcd} a_{031} \arrow[d] \\  b_{021} \end{tikzcd}};
    \node at (3.8, 3.5) {\begin{tikzcd} a_{012} \arrow[d] \\  b_{032} \end{tikzcd}};
  \end{tikzpicture}

  \caption{The base triangulation $\cT_0$ in $\R^3$.}
  \label{fig: base tri}
  
\end{figure}

\begin{figure}
  \centering
  \begin{tikzpicture}[nmdstd]
    \node[above right] at (2.4, 0) {\includegraphics[height=7cm]{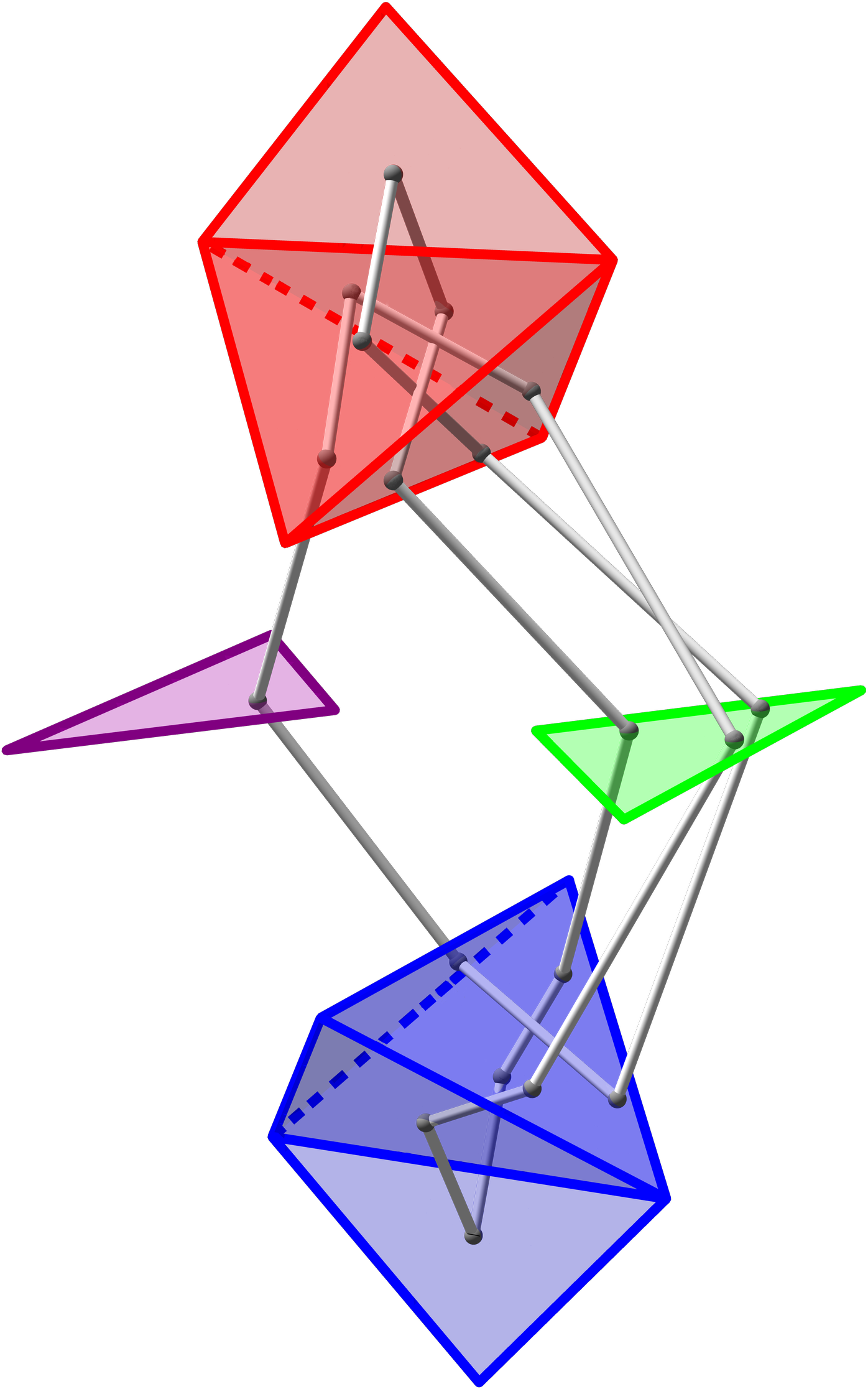}};
    \node[above right] at (9.3, 0) {\includegraphics[height=7cm]{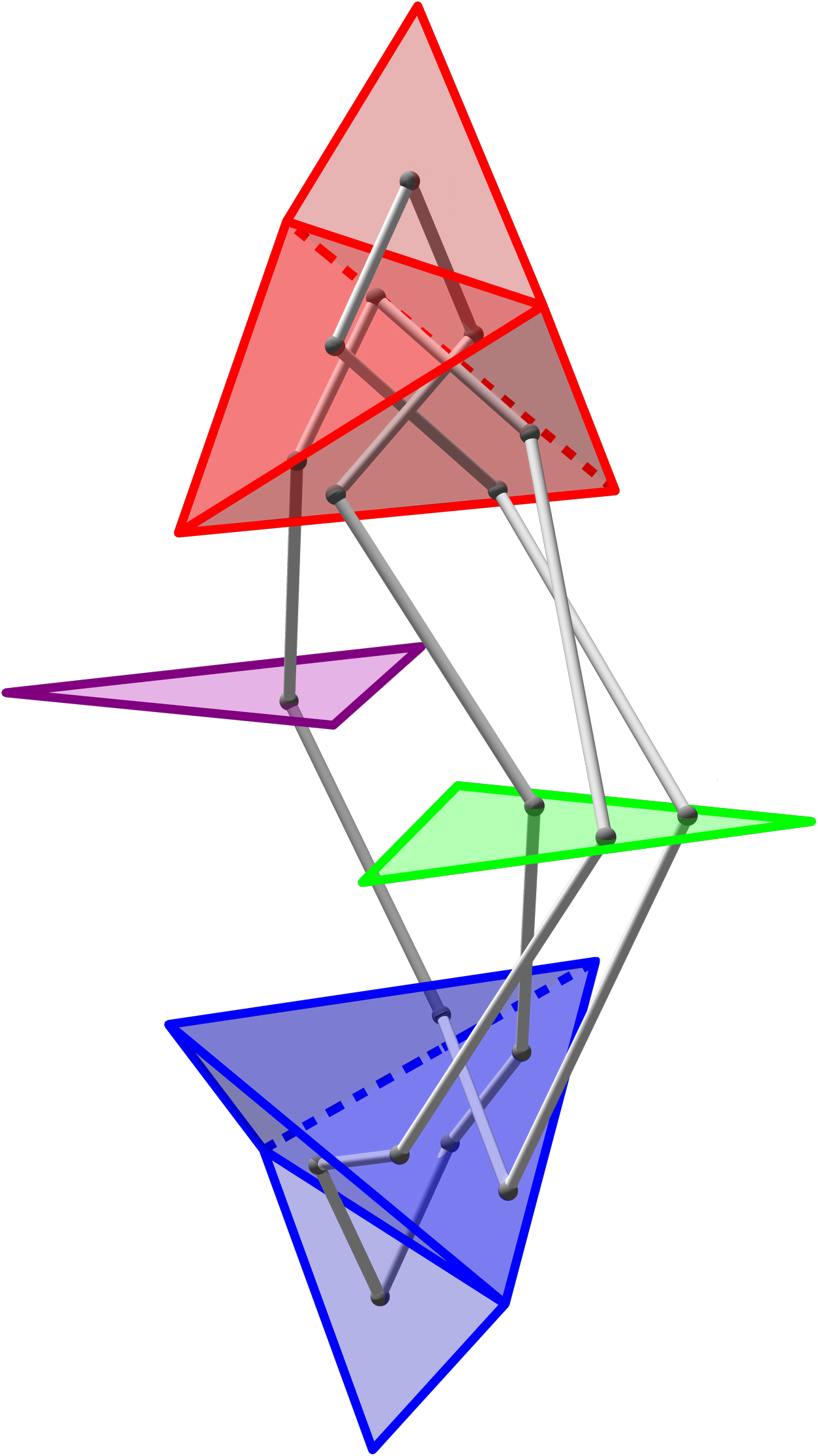}};
  \end{tikzpicture}

  \caption{Two views of the same link realized by the base triangulation
    $\cT_0$ using the fins and lenses shown.}
  \label{fig: fins and lenses}
\end{figure}

The base triangulation $\cT_0$ of $S^3$ has two tetrahedra and one
vertex and is shown in Figure~\ref{fig: base tri}; its isomorphism
signature in the sense of \cite[\S~3.2]{Burton2011}, which completely
determines the triangulation, is \texttt{cMcabbgdv}. We next give the
method for obtaining a planar diagram $D$ for a barycentric link $L$
in $\cT_0$.  We first build a PL link in $\R^3$ representing $L$ and
then project it onto a plane to get $D$.

We cut open $\cT_0$ along its faces and embed the resulting pair of
tetrahedra in $\R^3$ as shown in Figure~\ref{fig: base tri}.  This
cuts open the link $L$ along its intersections with the faces of
$\cT_0$, resulting in a collection of curves in $\R^3$ inside the two
tetrahedra.  To reconnect these curves and recover $L$, we use
\emph{fins} and \emph{lenses} as shown in Figure \ref{fig: fins and
  lenses} to interpolate between pairs of faces that are identified in
$\cT_0$.  There are two triangular fins, one attached vertically to
each tetrahedron, with each fin corresponding to one of the two
valence-1 edges of $\cT_0$.  The gluing of two faces incident to a
valence-1 edge is realized by folding them onto the corresponding
fin. Thus for each barycentric arc that ends in a face corresponding
to a fin, we add the line segment joining this endpoint of the arc to
the corresponding point in the fin.

The two triangular lenses lie between the two tetrahedra in a
horizontal plane. There is an affine map taking the corresponding face
in the top tetrahedron to its lens and a second affine map taking the
lens to the corresponding face in the bottom tetrahedron, arranged so
their composition is the face pairing in $\cT_0$. For every arc in the
top tetrahedron ending on a face corresponding to a lens, we add the
line segment between the endpoint and its image under the affine map
to the lens.  For each such segment that terminates on a lens, we add
the line segment from this endpoint to its image in the face of the
bottom tetrahedron under the affine map. This results in a PL link in
$\R^3 \subset S^3$ that must be isotopic to $L$: just imagine puffing
out the two tetrahedra to fill all of $S^3$ following the guides given
by the fins and lenses.


Given a collection of line segments in $\R^3$ corresponding to the
link $L$, we can build a diagram for $L$ by projecting the line
segments onto a plane, computing the crossing information, and
assembling this into a planar diagram.  Our default choice is roughly
to project onto the plane of the page in Figure~\ref{fig: fins and
  lenses}, with the (so far unused) fall-back of a small random matrix
in $\SL{3}{\Z}$ if a general-position failure occurs.  The link
diagrams resulting from this process have many more crossings than is
necessary, and we deal with this in Section \ref{sec: simp link}.
Still, the specific configuration of fins, lenses, and projection was
chosen to try minimize the number of crossings created at this stage;
our initial approach used a more compact embedding where the
tetrahedra shared a face, and this produced much larger diagrams.


\section{Simplifying link diagrams}
\label{sec: simp link}

\begin{figure}
  \centering
  \includegraphics[width=0.85\textwidth]{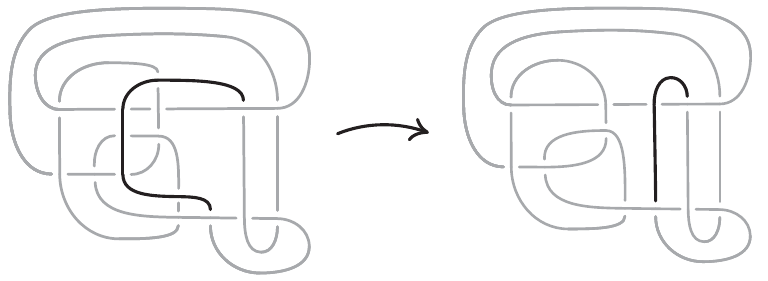}

  \caption{An example of the strand pickup method for diagram
    simplification.  At left, an \emph{overstrand}, which runs over
    each crossing it partipates in, is indicated by the darker line.
    At right is the result of isotoping the overstrand, fixing its
    endpoints, to get a diagram with fewer crossings.  The best
    possible location for an overstrand can be found by solving a
    weighted shortest-path problem in the planar dual graph to the
    original diagram.  }
  \label{fig: pickup}

\end{figure}

\begin{figure}
  \begin{tikzpicture}
    \node[above right] at (0, 0) {
      \begin{tikzoverlay}[height=6.3cm]{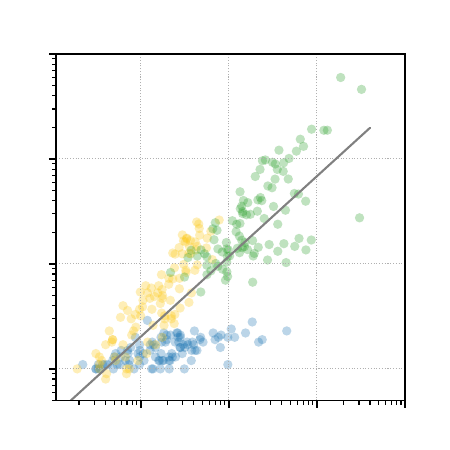}
        \draw (16.375000, 86.150000) node[below right, text width=3.4cm]
              {Fit: $r = 0.804$, \\ $\log y \approx 0.765 \log x - 0.224$ };
        \draw (51.250000, 0.586667) node[below] {Crossings in input diagram};
        \draw (31.267223, 7.759259) node[below] {${10^{2}}$};
        \draw (50.844815, 7.759259) node[below] {${10^{3}}$};
        \draw (70.422408, 7.759259) node[below] {${10^{4}}$};
        \draw (90.000000, 7.759259) node[below] {${10^{5}}$};
        \draw (-2, 49.500000) node[rotate=90.0, anchor=base]
              {Crossings in output diagram};
        \draw (10.8, 18.021842) node[left] {${10^{1}}$};
        \draw (10.8, 41.347894) node[left] {${10^{2}}$};
        \draw (10.8, 64.673947) node[left] {${10^{3}}$};
        \draw (10.8, 88.000000) node[left] {${10^{4}}$};
      \end{tikzoverlay}
    };
    \node[above right] at (7.0, 0) {
      \begin{tikzoverlay}[height=6.3cm, trim={0 0 0.7cm 0}, clip]{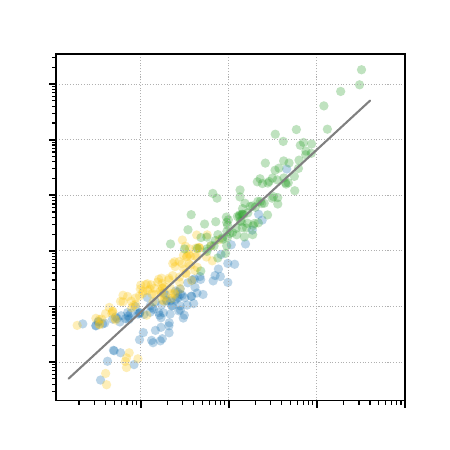}
        \begin{scope}[x=0.063cm, y=0.063cm] 
          \draw (16.375000, 86.150000) node[below right, text width=3.2cm]
                {Fit: $r = 0.926$, \\ $\log y \approx 1.46 \log x - 4.02$};
          \draw (51.250000, 0.586667) node[below] {Crossings in input diagram};
          \draw (31.267223, 7.759259) node[below] {${10^{2}}$};
          \draw (50.844815, 7.759259) node[below] {${10^{3}}$};
          \draw (70.422408, 7.759259) node[below] {${10^{4}}$};
          \draw (90.000000, 7.759259) node[below] {${10^{5}}$};
          \draw (-4.5, 49.500000) node[rotate=90.0, anchor=base] {Time to simplify (seconds)};
          \draw (10.8, 19.533742) node[left] {${10^{-2}}$};
          \draw (10.8, 31.890550) node[left] {${10^{-1}}$};
          \draw (10.8, 44.247359) node[left] {${10^{0}}$};
          \draw (10.8, 56.604167) node[left] {${10^{1}}$};
          \draw (10.8, 68.960975) node[left] {${10^{2}}$};
          \draw (10.8, 81.317783) node[left] {${10^{3}}$};
        \end{scope}
      \end{tikzoverlay}
    };
  \end{tikzpicture}

  \caption{Simplifying 300 diagrams with between 19 and 32,095
    crossings, drawn from Sections~\ref{sec: impl} and~\ref{sec:
      zero}. The dramatic amount of simplification is shown at left,
    with an $n$-crossing knot turned into one with $O(n^{0.8})$
    crossings.  The running time shown on the right is roughly $O(n^{1.5})$. }
  \label{fig: simp in action}
\end{figure}

We now sketch how we simplified the initial link diagram constructed
in Section~\ref{sec: embed}, which sometimes had 10,000--100,000
crossings, to produce the final output of our algorithm for
\FindDiagram.  Previous computational work focused on simplifying
diagrams with 20 or fewer crossings
\cite{HosteThistlethwaiteWeeks1998, Burton2020}. In that regime,
random Reidemeister moves combined with flypes are extremely effective
in reducing the number of crossings.  However, these techniques alone
proved inadequate for our much larger links.  Instead, we used the
more global \emph{strand pickup} method of Figure~\ref{fig: pickup}.
This technique was introduced by the second author and included in
SnapPy \cite{SnapPy} since version 2.3 (2015), but not previously
documented in the literature.  It has similarities with the arc
representation/grid diagram approach of \cite{Dynnikov1999,
  Dynnikov2006, DynnikovSokolova2021}, but it works with arbitrary
planar diagrams.  When applying the pickup move, we start with the
longest overstrands and work towards the shorter ones if no
improvement is made.  When a pickup move succeeds, we do more basic
simplications before looking for another pickup move.  We also do the
same move on understrands, going back and forth between the two sides
until the diagram stabilizes; for details, see \cite{ObeidinSimplify}.
The high amount of simplification and sub-quadratic running time are
shown in Figure~\ref{fig: simp in action}.  As further evidence of
its utility, we note that it strictly monotonically reduces the unknot
diagrams $D_{28}$, $D_{43}$, and $PZ_{78}$ in \cite{BurtonEtAl2021} to
the trivial diagram; in constrast, these require adding at least three
crossings if one uses only Reidemeister moves.


\section{A lower bound on computational complexity}
\label{sec: torus}

In this section, we show that the worst-case complexity of
\FindDiagram\ is at least exponential by exhibiting inputs where any
output must be exponentially large. Specifically, let $F_n$ be the
Fibonacci sequence and $\alpha = (1 + \sqrt{5})/2 \approx 1.618$ be
the golden ratio.  Let $K_n$ be the torus knot $T(F_n,
F_{n-1})$. Then:

\begin{theorem}
  \label{thm: fib}
  There is an ideal triangulation of $E(K_n)$ with $O(n)$ tetrahedra,
  but the minimum crossing number of $K_n$ is
  $F_{n-1}(F_n - 1) \sim \alpha^{2 n - 1}/5$.
\end{theorem}
We now give the short proof, which is similar to the analysis of the
complexity of a normal meridional disk in a minimal triangulation of a
layered solid torus in \cite[Section 5]{JacoRubinstein2006}.

\begin{proof}

Set $p = F_n$ and $q = F_{n-1}$. The claim about the crossing number
of $K_n$ is Proposition~7.5 of \cite{Murasugi1991}.  For the ideal
triangulation of $E(K_n)$, first we construct a 1-vertex triangulation
$\cT$ of $S^3$ where $K_n$ is an edge. We do this by gluing two
layered solid tori (see Section~\ref{sec: layered tri}) to form the
genus-1 Heegaard splitting of $S^3$ so that the torus knot $K_n$ is an
edge on their common interface. Since the partial quotients of the
continued fraction expansion of $p/q$ are all 1, this requires only
$O(n)$ tetrahedra, see \cite{JacoSedgwick2003}.  Now, take the second
barycentric subdivision of $\cT$ and delete the interior of the link
of $K_n$ to get a triangulation $\cT'$ of the compact manifold
$E(K_n)$ with $O(n)$ tetrahedra.  Finally crush as in
\cite[Theorem~7.1]{JacoRubinstein2003} to get an ideal triangulation
$\Tdot$ for $E(K_n)$ with no more tetrahedra than $\cT'$.
\end{proof}
\


\section{Implementation and initial experiments}
\label{sec: impl}

We implemented our algorithm in Python, building on the pure-Python
\texttt{t3mlite} library for \3-manifold triangulations that is part
of SnapPy \cite{SnapPy}.  We also used SnapPy's C kernel to produce
the layered filled triangulation $\cT$ of Section~\ref{sec: dehn} from
the input ideal triangulation $\Tdot$.  The needed linear algebra over
$\Q$ was handled by PARI \cite{PARI2}. Not including these libraries,
our implementation consists of 1,800 lines of Python code.  We had to
put considerable effort into optimization to handle examples as large
as that shown in Figure~\ref{fig: cong 2}. Our code and data is
archived at \cite{CodeAndData} and has been incorporated into version
3.1 of SnapPy \cite{SnapPy}.

To validate our implementation, we applied it to two sample sets, one
where the inputs were small and one where the best-possible outputs
were small.  The first, $\CK$, is the 1,267 hyperbolic knots whose
exteriors have ideal triangulations with at most 9 tetrahedra
\cite{Dunfield2020, BakerKegel2021}.  The second, $\SK$, consists of
1,000 knots with minimal crossing numbers between 10 and 19.
Specifically, $\SK$ has 100 knots for each crossing number in that
range, which were selected at random from all the hyperbolic
nonalternating knots with that crossing number \cite{Burton2020}; the
exception is that there are only 41 such 10-crossing knots, so 59
alternating 10-crossing knots were used as well.  (Alternating knots
have unusually close connections between their diagrams and exteriors,
so were excluded as possibly being an easy case for \FindDiagram.)

Our program found diagrams for all 2,267 of these exteriors.  The
running time was under 20 seconds for 96.7\% of them, with a max of
2.5 minutes (CPUs were Intel Xeon E5-2690 v3 at 2.6GHz with 4GB of
memory per core, circa 2014); see Figure~\ref{fig: running}.  The
input ideal triangulations $\Tdot$ had between 2 and 44 tetrahedra,
and the resulting layered filling triangulation $\cT$ had between 13
and 77 tetrahedra (mean of 31.5), typically 60\% larger than $\Tdot$;
see Figure~\ref{fig: tets}.  The sequence of simple Pachner moves used
to reduce $\cT$ to $\cT_0$ had length between 39 and 761 (mean of
241.0), see Figure~\ref{fig: expanded moves}; this was typically 7.5
times longer than the initial sequence of Pachner moves that included
$2 \to 0$ moves (Figure~\ref{fig: expansion factor}).  For the knots
in $\SK$, we compare the size of the output diagram to the minimal
crossing number in Figure~\ref{fig: num cross}; the output matched the
crossing number for 42.1\% of these exteriors, and it was within 3 for
87.8\%.  For $\CK$, the maximal number of crossings in the output was
303, with mean output crossing number 65.9, and median output crossing
number 40.

\begin{figure}[p]
  \centering
  \begin{tikzoverlay}[width=.9\textwidth]{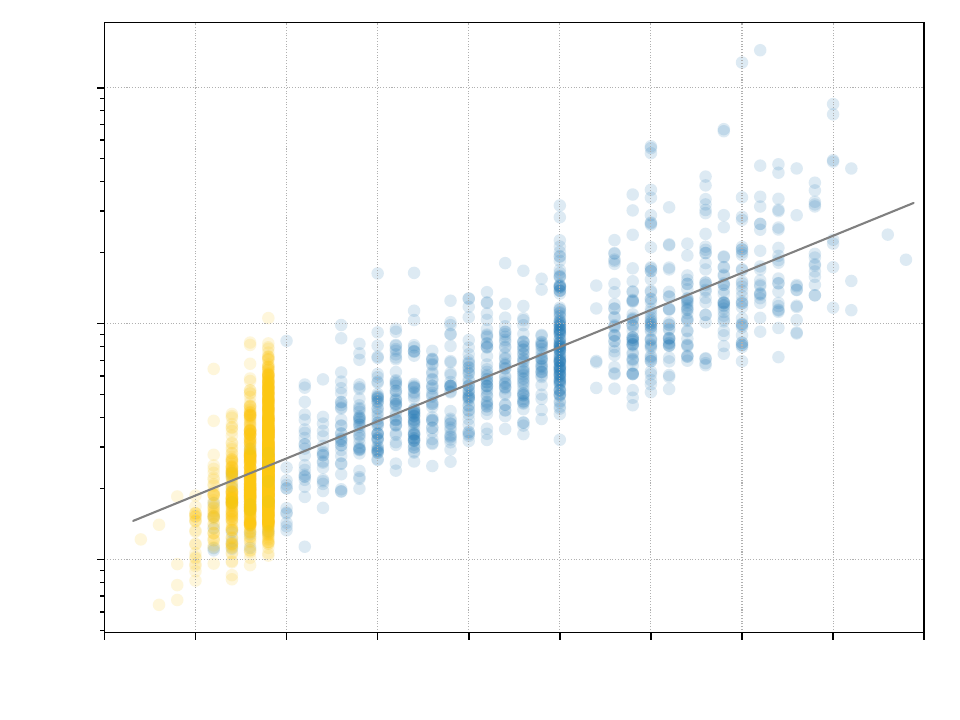}
    \draw (15.143327, 69.478733) node[below right]
      {Fit: $\log y \approx 0.031 x + 0.114$, $r = 0.850$};
    \draw (53.571529, 3.326270) node[below] {Ideal tetrahedra in $\Tdot$};
    \draw (10.873526, 7.586806) node[below] {$0$};
    \draw (20.361971, 7.586806) node[below] {$5$};
    \draw (29.850416, 7.586806) node[below] {$10$};
    \draw (39.338861, 7.586806) node[below] {$15$};
    \draw (48.827306, 7.586806) node[below] {$20$};
    \draw (58.315751, 7.586806) node[below] {$25$};
    \draw (67.804196, 7.586806) node[below] {$30$};
    \draw (77.292641, 7.586806) node[below] {$35$};
    \draw (86.781086, 7.586806) node[below] {$40$};
    \draw (96.269531, 7.586806) node[below] {$45$};
    \draw (2.283626, 40.881076) node[rotate=90.0] {Running time (seconds)};
    \draw (9.354429, 16.712146) node[left] {${10^{0}}$};
    \draw (9.354429, 41.288829) node[left] {${10^{1}}$};
    \draw (9.354429, 65.865511) node[left] {${10^{2}}$};
    \SKCKlegend{(89, 12)}
  \end{tikzoverlay}

  \caption{Mean running time for the 2,267 knot exteriors in $\SK$ and
    $\CK$ appears exponential with small base, roughly
    $O(1.07^n)$. Compare Figure~\ref{fig: num arcs} on the growth of
    the number of arcs in $\cT_0$.  }
  \label{fig: running}
\end{figure}

\begin{figure}[p]
  \centering
  \begin{tikzoverlay}[width=.9\textwidth]{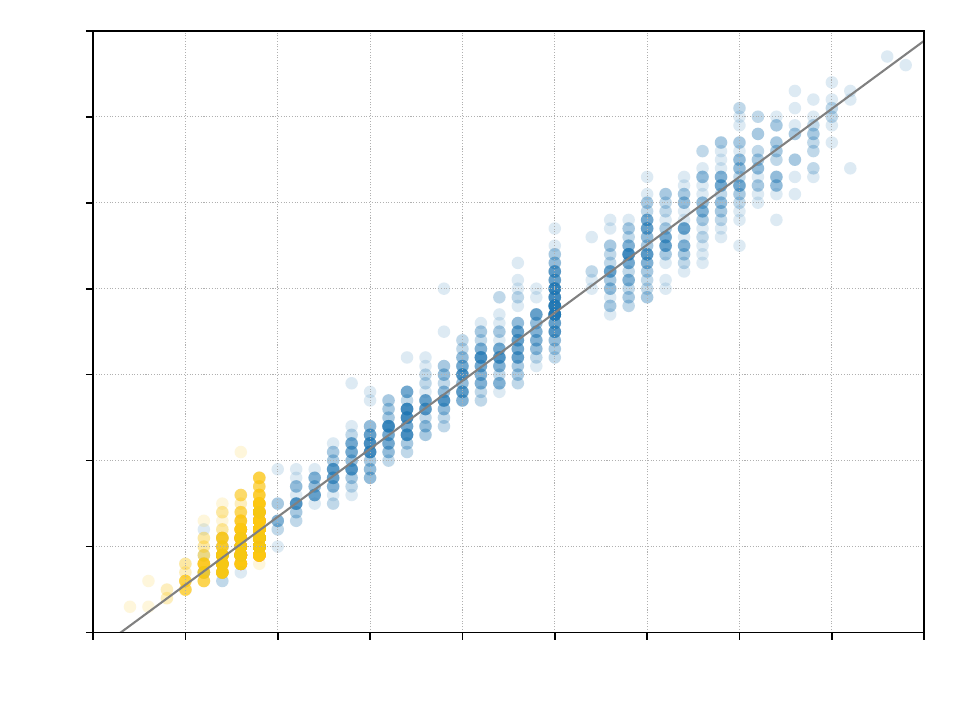}
    \draw (14.020725, 68.662326) node[below right]
        {Fit: $y \approx 1.58 x + 7.64$, $r = 0.989$};
     \draw (52.980686, 3.326270) node[below] {Ideal tetrahedra in $\Tdot$};
     \draw (9.691840, 7.586806) node[below] {$0$};
     \draw (19.311584, 7.586806) node[below] {$5$};
     \draw (28.931327, 7.586806) node[below] {$10$};
     \draw (38.551071, 7.586806) node[below] {$15$};
     \draw (48.170814, 7.586806) node[below] {$20$};
     \draw (57.790557, 7.586806) node[below] {$25$};
     \draw (67.410301, 7.586806) node[below] {$30$};
     \draw (77.030044, 7.586806) node[below] {$35$};
     \draw (86.649788, 7.586806) node[below] {$40$};
     \draw (96.269531, 7.586806) node[below] {$45$};
     \draw (2.272070, 40.451389) node[rotate=90.0, anchor=base]
           {Tetrahedra in layered filled $\cT$};
     \draw (8.172743, 9.105903) node[left] {$10$};
     \draw (8.172743, 18.061756) node[left] {$20$};
     \draw (8.172743, 27.017609) node[left] {$30$};
     \draw (8.172743, 35.973462) node[left] {$40$};
     \draw (8.172743, 44.929315) node[left] {$50$};
     \draw (8.172743, 53.885169) node[left] {$60$};
     \draw (8.172743, 62.841022) node[left] {$70$};
     \draw (8.172743, 71.796875) node[left] {$80$};
     \SKCKlegend{(89, 12)}
     \begin{scope}[shift={(9.69184028, 0.15004960)},
       xscale=1.92394869, yscale=0.89558532]
     \end{scope}
   \end{tikzoverlay}

  \caption{The number of tetrahedra in the layered filled $\cT$
    compared to the input ideal $\Tdot$.}
  \label{fig: tets}
\end{figure}

\begin{figure}[p]
  \centering
  \begin{tikzoverlay}[width=.9\textwidth]{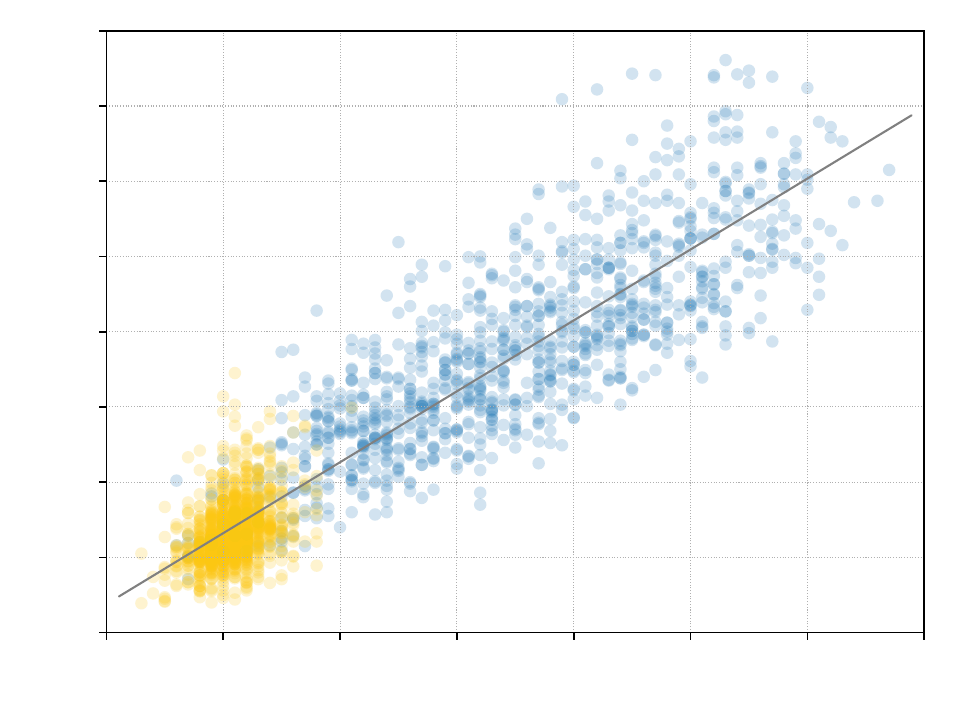}
    \draw (15.338108, 68.662326) node[below right]
          {Fit: $y \approx 9.43 x - 56.4$, $r = 0.923$};
    \draw (53.674045, 3.326270) node[below] {Tetrahedra in $\cT$};
    \draw (11.078559, 7.586806) node[below] {$10$};
    \draw (23.248698, 7.586806) node[below] {$20$};
    \draw (35.418837, 7.586806) node[below] {$30$};
    \draw (47.588976, 7.586806) node[below] {$40$};
    \draw (59.759115, 7.586806) node[below] {$50$};
    \draw (71.929253, 7.586806) node[below] {$60$};
    \draw (84.099392, 7.586806) node[below] {$70$};
    \draw (96.269531, 7.586806) node[below] {$80$};
    \draw (2.0, 40.451389) node[rotate=90.0, anchor=base]
        {Simple Pachner moves};
    \draw (9.559462, 9.105903) node[left] {$0$};
    \draw (9.559462, 16.942274) node[left] {$100$};
    \draw (9.559462, 24.778646) node[left] {$200$};
    \draw (9.559462, 32.615017) node[left] {$300$};
    \draw (9.559462, 40.451389) node[left] {$400$};
    \draw (9.559462, 48.287760) node[left] {$500$};
    \draw (9.559462, 56.124132) node[left] {$600$};
    \draw (9.559462, 63.960503) node[left] {$700$};
    \draw (9.559462, 71.796875) node[left] {$800$};
    \SKCKlegend{(89, 11.5)}
    \begin{scope}[shift={(-1.09157986, 9.10590278)},
      xscale=1.21701389, yscale=0.07836372]
    \end{scope}
  \end{tikzoverlay}

  \caption{The number of \emph{simple} Pachner moves used to transform
    the layered filled triangulation $\cT$ into the base triangulation
    $\cT_0$ is generically linear in the size of $\cT$.}
  \label{fig: expanded moves}
\end{figure}

\begin{figure}[p]
  \centering
  \begin{tikzoverlay}[width=.9\textwidth]{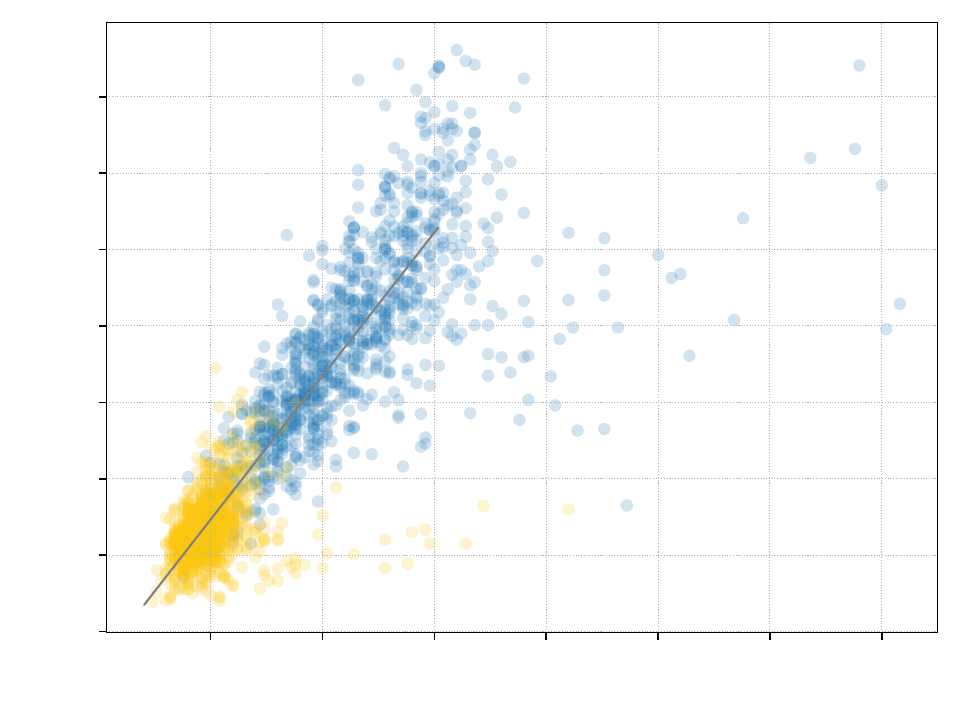}
    \draw (97, 10) node[above left]
          {Fit: $y \approx 7.52 x - 41.4$, $r = 0.896$};
    \draw (54.367405, 3.326270) node[below]
          {General Pachner moves from $\cT$ to $\cT_0$};
    \draw (21.940273, 7.586806) node[below] {$25$};
    \draw (33.592972, 7.586806) node[below] {$50$};
    \draw (45.245672, 7.586806) node[below] {$75$};
    \draw (56.898371, 7.586806) node[below] {$100$};
    \draw (68.551070, 7.586806) node[below] {$125$};
    \draw (80.203769, 7.586806) node[below] {$150$};
    \draw (91.856469, 7.586806) node[below] {$175$};
    \draw (1.0, 40.881076) node[rotate=90.0] {Simple Pachner moves};
    \draw (9.559462, 9.220915) node[left] {$0$};
    \draw (9.559462, 17.177115) node[left] {$100$};
    \draw (9.559462, 25.133315) node[left] {$200$};
    \draw (9.559462, 33.089515) node[left] {$300$};
    \draw (9.559462, 41.045716) node[left] {$400$};
    \draw (9.559462, 49.001916) node[left] {$500$};
    \draw (9.559462, 56.958116) node[left] {$600$};
    \draw (9.559462, 64.914316) node[left] {$700$};
    \SKCKlegend{(90, 16)}
    \begin{scope}[shift={(10.28757380, 9.22091522)},
      xscale=0.46610797, yscale=0.07956200]
    \end{scope}
\end{tikzoverlay}

  \caption{This plot shows the increase in the number of Pachner moves
    when we factor the $2 \to 0$ moves into simple Pachner moves.  The
    regression line is based on points with $x < 75$.}
  \label{fig: expansion factor}
\end{figure}

\begin{figure}[p]
  \centering
  \begin{tikzoverlay}[width=.9\textwidth]{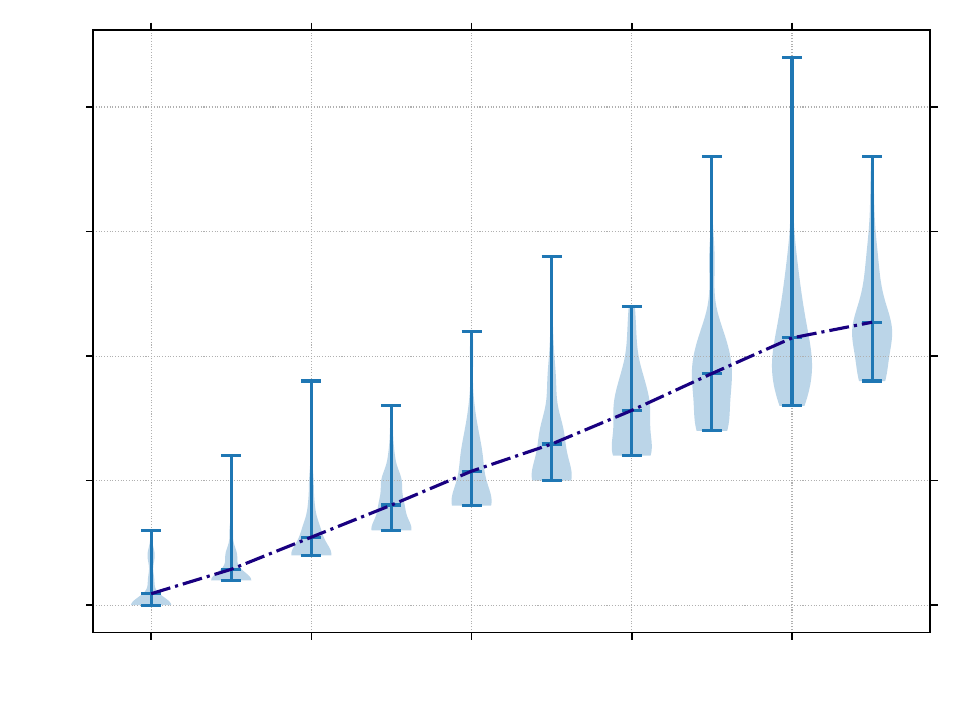}
    \draw (53.294271, 3.326270) node[below] {Minimal crossing number};
    \draw (15.741938, 7.586806) node[below] {$10$};
    \draw (32.431864, 7.586806) node[below] {$12$};
    \draw (49.121789, 7.586806) node[below] {$14$};
    \draw (65.811715, 7.586806) node[below] {$16$};
    \draw (82.501641, 7.586806) node[below] {$18$};
    \draw (1.272070, 40.501302) node[rotate=90.0, anchor=base] {Crossings in output};
    \draw (8.172743, 11.960030) node[left] {$10$};
    \draw (8.172743, 24.933335) node[left] {$15$};
    \draw (8.172743, 37.906641) node[left] {$20$};
    \draw (8.172743, 50.879946) node[left] {$25$};
    \draw (8.172743, 63.853252) node[left] {$30$};
    \begin{scope}[shift={(-67.70768956, -13.98658101)},
      xscale=8.34496279, yscale=2.59466110]
    \end{scope}
  \end{tikzoverlay}

  \caption{For the knots in $\SK$, grouped by minimum crossing number,
    the number of crossings in the diagram output by our program.  The
    dotted line indicates the mean. }
  \label{fig: num cross}
\end{figure}

\begin{figure}[p]
  \centering
  \begin{tikzoverlay}[width=.9\textwidth]{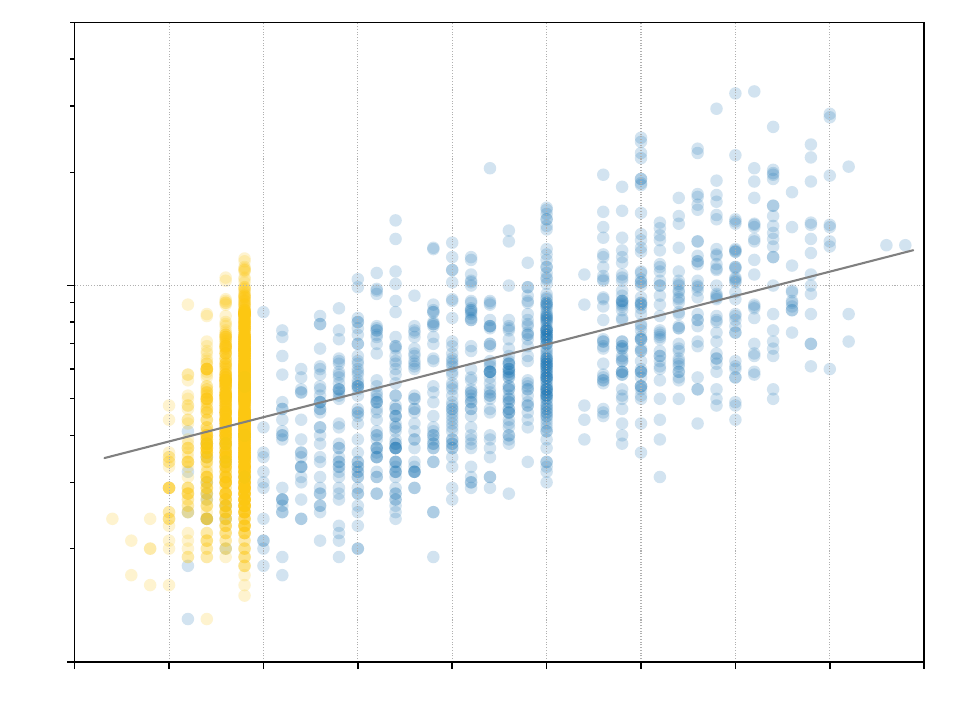}
    \draw (12.201120, 69.325955) node[below right]
          {Fit: $\log y \approx 0.013 x + 1.52$, $r = 0.573$};
    \draw (52.033985, 1.126270) node[below] {Ideal tetrahedra in $\Tdot$};
    \draw (7.776467, 4.531250) node[below] {$0$};
    \draw (17.609030, 4.531250) node[below] {$5$};
    \draw (27.441592, 4.531250) node[below] {$10$};
    \draw (37.274155, 4.531250) node[below] {$15$};
    \draw (47.106718, 4.531250) node[below] {$20$};
    \draw (56.939280, 4.531250) node[below] {$25$};
    \draw (66.771843, 4.531250) node[below] {$30$};
    \draw (76.604406, 4.531250) node[below] {$35$};
    \draw (86.436969, 4.531250) node[below] {$40$};
    \draw (96.269531, 4.531250) node[below] {$45$};
    \draw (0, 38.743924) node[rotate=90.0, anchor=base] {Number of arcs in $\cT_0$};
    \draw (6.257370, 6.050347) node[left] {${10^{1}}$};
    \draw (6.257370, 45.254043) node[left] {${10^{2}}$};
    \draw (6.257370, 72.81) node[left] {${5 \times 10^{2}}$};
  \end{tikzoverlay}

  \caption{The number of barycentric arcs when we arrive at $\cT_0$
    appears exponential in the size of the input $\Tdot$, roughly
    $O(1.03^n)$.}
  \label{fig: num arcs}

\end{figure}


\section{Applications}
\label{sec: apps}

\subsection{Congruence links}
\label{sec: cong}

Powerful tools from number theory apply to the special class of
arithmetic hyperbolic \3-manifolds. Thurston asked which link
exteriors are in the subclass of principal congruence arithmetic
manifolds; this was resolved in \cite{BakerGoernerReid2019a}: there
are exactly 48 such exteriors.  These 48 have hyperbolic volumes in
[5.33348, 1365.37] and ideal triangulations with between 6 and 1,526
tetrahedra.  Link diagrams for 15 of these 48 had previously been
found by ad~hoc methods \cite{BakerGoernerReid2019b}. Our program has
found diagrams for 23 more, including Figures~\ref{fig: cong 1} and
\ref{fig: cong 2}; collectively, we now have links for the 38
such exteriors of smallest volume, see Figure~\ref{fig: cong plot}.

\begin{figure}[bth]
  \centering
  \begin{tikzpicture}
    \node[above right] at (0, 0) {%
      \begin{tikzoverlay}[height=6.3cm]{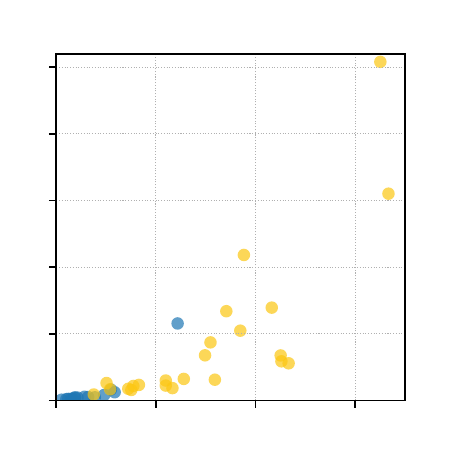}
        \draw (51.250000, 1.029375) node[below] {Hyperbolic volume};
        \draw (12.500000, 7.759259) node[below] {$0$};
        \draw (34.642857, 7.759259) node[below] {$100$};
        \draw (56.785714, 7.759259) node[below] {$200$};
        \draw (78.928571, 7.759259) node[below] {$300$};
        \draw (-5.145833, 49.500000) node[rotate=90.0, anchor=base] {Crossings};
        \draw (10.8, 11.000000) node[left] {$0$};
        \draw (10.8, 25.807692) node[left] {$500$};
        \draw (10.8, 40.615385) node[left] {$1000$};
        \draw (10.8, 55.423077) node[left] {$1500$};
        \draw (10.8, 70.230769) node[left] {$2000$};
        \draw (10.8, 85.038462) node[left] {$2500$};
      \end{tikzoverlay}
    };
    \node[above right] at (6.8, 0) {%
      \begin{tikzoverlay}[height=6.3cm, trim={0 0 0.7cm 0}, clip]{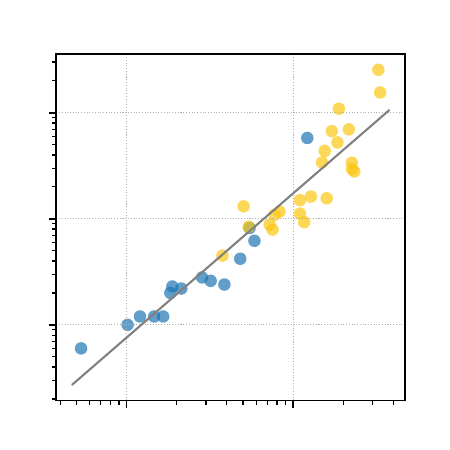}
        \begin{scope}[x=0.063cm, y=0.063cm] 
          \draw (14.375000, 86.150000) node[below right, text width=3.4cm]
                {Fit: $r = 0.946$, \\ $\log y \approx 1.356 \log x - 0.480$};
          \draw (51.250000, 0.586667) node[below] {Hyperbolic volume};
          \draw (28.119686, 7.759259) node[below] {${10^{1}}$};
          \draw (65.128425, 7.759259) node[below] {${10^{2}}$};
          \draw (-2, 49.500000) node[rotate=90.0, anchor=base] {Crossings};
          \draw (10.8, 27.808054) node[left] {${10^{1}}$};
          \draw (10.8, 51.380543) node[left] {${10^{2}}$};
          \draw (10.8, 74.953032) node[left] {${10^{3}}$};
        \end{scope}
      \end{tikzoverlay}
    };
  \end{tikzpicture}

  \caption{The 38 known link diagrams whose exteriors are principal
    congruence arithmetic; blue are the 15 from
    \cite{BakerGoernerReid2019b}, yellow are new.  The plots are the
    same except for the scales on the axes.  The regression at right
    predicts that our algorithm would produce a diagram for the link
    for the largest such exterior with about 9,000 crossings}
  \label{fig: cong plot}
\end{figure}

\begin{figure}
  \centering
  \begin{tikzoverlay}[width=0.5\textwidth]{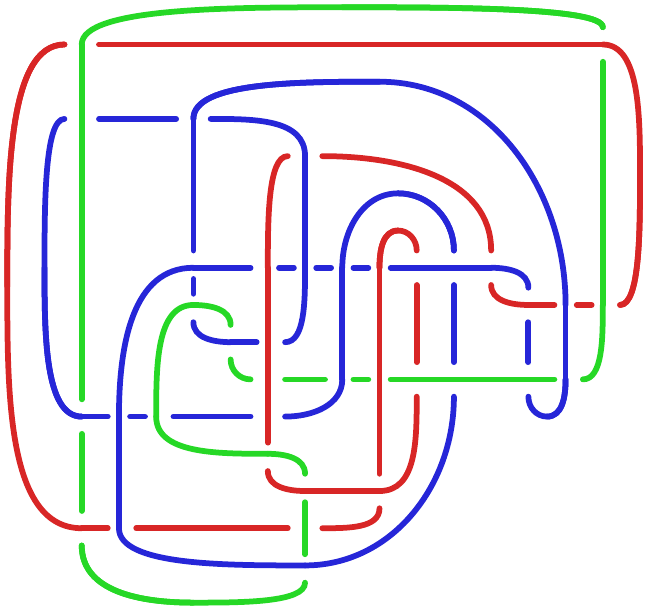}
    \begin{scope}[font=\small]
      \node[left] at (3.3,81.2) {$(0, 1)$};
      \node[below right] at (64.9,15.3) {$(-5, 1)$};
      \node[below left] at (13.4, 6.6) {$(0, 1)$};
    \end{scope}
  \end{tikzoverlay}
  \caption{A Dehn surgery description of the Seifert--Weber
    dodecahedral space.}
  \label{fig: SW}
\end{figure}

\subsection{Dehn surgery descriptions}
\label{sec: dehn desc}

Every closed orientable 3-manifold is a Dehn filling on some link
exterior in $S^3$ \cite[Chapter~9]{Rolfsen1990}, and such \emph{Dehn
  surgery descriptions} play a key role in both theory and practice.
However, finding a Dehn surgery description from e.g.~a triangulation
can be extremely challenging.  Thurston observed experimentally that,
starting with a closed hyperbolic \3-manifold, one frequently arrives
at a link exterior by repeatedly drilling out short closed geodesics,
see page 516 of \cite{Adams1999}.  Combining this with our algorithm
for \FindDiagram\ gives an effective tool for finding Dehn surgery
descriptions given a triangulation.  We applied this to the
Seifert--Weber dodecahedral space, which is an old example
\cite{WeberSeifert1933} still of much current interest
\cite{Bering2013, BurtonRubinsteinTillmann2012, LinLipnowski2020}.
The resulting description in Figure~\ref{fig: SW} seems to be the
first such published; a different description appeared subsequently in
\cite{BakerBlog}.  One could likely use a similar technique to find
Dehn surgery descriptions of a nonhyperbolic 3-manifold, for example
by first removing a complicated knot whose complement is hyperbolic
\cite{Myers1982} and then proceeding as before, or by using some other
method to select promising curves to drill out.

\subsection{Knots with the same 0-surgery}
\label{sec: zero}

The 0-surgery $\zK$ on a knot $K$ is the unique Dehn filling $N$ of
$\eK$ where $H_1(N; \Q) \neq 0$.  Pairs of knots $K$ and $K'$ with
$\zK$ homeomorphic to $Z(K')$ are of much interest in low-dimensional
topology.  Most strikingly, if such a pair $K$ and $K'$ exist with $K$
slice (i.e.~bounds a smooth $D^2$ in $D^4$) and the Rasmussen
$s$-invariant of $K'$ is nonzero, then the smooth 4-dimensional
Poincar\'e conjecture is false.  That is, there would exist a
4-manifold that is homeomorphic but not diffeomorphic to $S^4$. See
\cite{FreedmanGompfMorrison2010, ManolescuPiccirillo2021} for a
general discussion, and also \cite{Piccirillo2020} for an important
recent result using pairs with $Z(K) \cong Z(K')$.  There are many
techniques for constructing families of such pairs, which have been
unified by the red-blue-green link framework of
\cite{ManolescuPiccirillo2021}.  However, given a particular $K$, a
practical algorithm to search for $K'$ with the same 0-surgery has
been lacking.  When $Z(K)$ is hyperbolic, we attack this as follows.
First, find the short closed geodesics in $Z(K)$ using
\cite{HodgsonWeeks1994}.  Then drill out each geodesic in turn, and
test if the resulting manifold $\Mdot'$ has a Dehn filling which is
$S^3$; if it does, use our algorithm for \FindDiagram\ to $\Mdot'$ to
get a diagram for $K'$.

Figure~\ref{fig: zero} shows the result of applying our algorithm to
100 pairs $(K, \gamma)$ where $K$ is a knot with at most 18 crossings
and $\gamma$ is a short closed geodesic in $\zK$ whose exterior is
also that of a knot $K'$ in $S^3$. In all cases, we were able to
recover a diagram for $K'$, and these were more challenging on average
than the examples in Section~\ref{sec: impl}.

\begin{figure}[b]
  \centering

  \

  \vspace{-1cm}
  
  \begin{tikzpicture}
    \node[above right] at (0, 6.4) {
    \begin{tikzoverlay}[width=6cm]{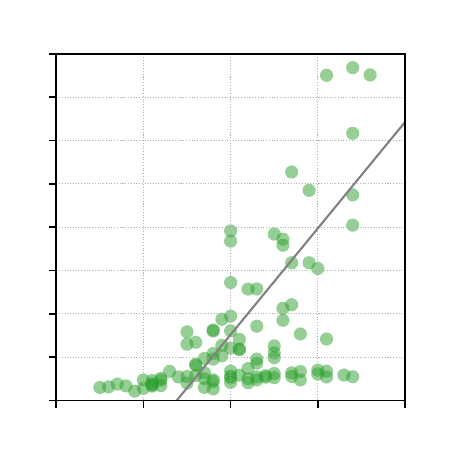}
      \draw (16.375000, 86.150000) node[below right, text width=2.5cm]
            {Fit: $r = 0.574$, \\ $y \approx 61.4 x - 2075$};
      \draw (51.250000, 1.029375) node[below] {Ideal tetrahedra in $\Tdot$};
      \draw (16.375000, 69.150000) node[below right, text width=2.5cm]
            {Outliers at \\ (57, 4603) and \\ (64, 5973) omitted};
      \draw (12.500000, 7.759259) node[below] {$20$};
      \draw (31.875000, 7.759259) node[below] {$30$};
      \draw (51.250000, 7.759259) node[below] {$40$};
      \draw (70.625000, 7.759259) node[below] {$50$};
      \draw (90.000000, 7.759259) node[below] {$60$};
      \draw (-6, 49.500000) node[rotate=90.0, anchor=base] {Crossings in output};
      \draw (10.8, 11.000000) node[left] {$0$};
      \draw (10.8, 20.625000) node[left] {$250$};
      \draw (10.8, 30.250000) node[left] {$500$};
      \draw (10.8, 39.875000) node[left] {$750$};
      \draw (10.8, 49.500000) node[left] {$1000$};
      \draw (10.8, 59.125000) node[left] {$1250$};
      \draw (10.8, 68.750000) node[left] {$1500$};
      \draw (10.8, 78.375000) node[left] {$1750$};
      \draw (10.8, 88.000000) node[left] {$2000$};
      \begin{scope}[shift={(-26.25000000, 11.00000000)},
                    xscale=1.93750000, yscale=0.03850000]
      \end{scope}
    \end{tikzoverlay}
    };
    \node[above right] at (6.9, 6.4) {
      \begin{tikzoverlay}[width=6cm]{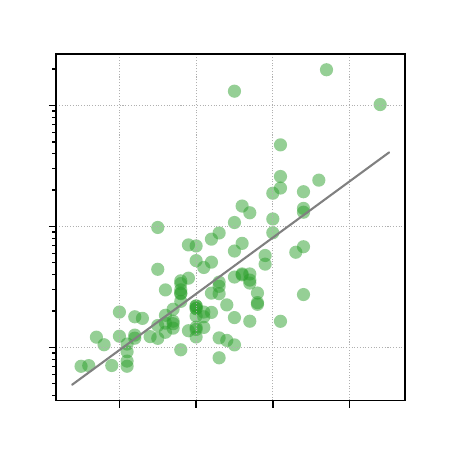}
        \draw (14.375000, 82.150000) node[below right, text width=3.2cm]
              {Fit: $r = 0.713$, \\ $y \approx 0.046 x - 0.416$};
        \draw (51.250000, 1.029375) node[below] {Ideal tetrahedra in $\Tdot$};
        \draw (26.538077, 7.759259) node[below] {$30$};
        \draw (43.580782, 7.759259) node[below] {$40$};
        \draw (60.623488, 7.759259) node[below] {$50$};
        \draw (77.666194, 7.759259) node[below] {$60$};
        \draw (-4, 49.500000) node[rotate=90.0, anchor=base] {Running time (seconds)};
        \draw (10.8, 22.796621) node[left] {${10^{1}}$};
        \draw (10.8, 49.677508) node[left] {${10^{2}}$};
        \draw (10.8, 76.558394) node[left] {${10^{3}}$};
    \end{tikzoverlay}
    };
    \node[above right] at (0, 0) {
      \begin{tikzoverlay}[width=6cm]{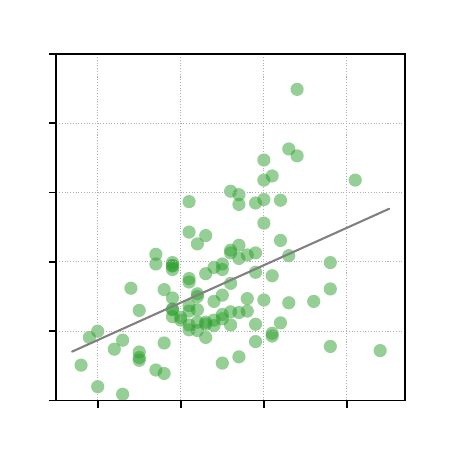}
        \draw (16.375000, 86.150000) node[below right, text width=3cm]
              {Fit: $r = 0.456$, \\  $y \approx 5.40 x + 71.1$ };
        \draw (51.250000, 1.029375) node[below] {Tets in the layered filling $\cT$};
        \draw (21.709310, 7.759259) node[below] {$40$};
        \draw (40.172241, 7.759259) node[below] {$50$};
        \draw (58.635172, 7.759259) node[below] {$60$};
        \draw (77.098104, 7.759259) node[below] {$70$};
        \draw (-5, 49.500000) node[rotate=90.0, anchor=base]
              {Simple Pachner moves to $\cT_0$};
        \draw (10.8, 11.000000) node[left] {$200$};
        \draw (10.8, 26.400000) node[left] {$300$};
        \draw (10.8, 41.800000) node[left] {$400$};
        \draw (10.8, 57.200000) node[left] {$500$};
        \draw (10.8, 72.600000) node[left] {$600$};
        \draw (10.8, 88.000000) node[left] {$700$};
        \begin{scope}[shift={(-52.14241471, -19.80000000)},
                      xscale=1.84629312, yscale=0.15400000]
        \end{scope}
      \end{tikzoverlay}
    };
    \node[above right] at (6.9, 0) {
      \begin{tikzoverlay}[width=6cm]{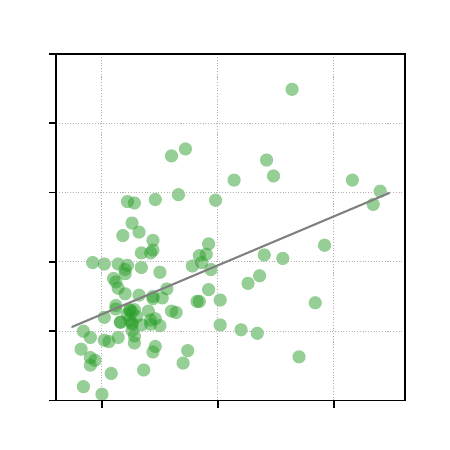}
        \draw (16.375000, 86.150000) node[below right, text width=3cm]
              {Fit: $r = 0.478$, \\ $y \approx 1.41 x + 254$};
        \draw (51.250000, 1.029375) node[below]
              {Pachner moves $\cT \to \cT_0$, with $2 \to 0$};
        \draw (22.653925, 7.759259) node[below] {$50$};
        \draw (48.416155, 7.759259) node[below] {$100$};
        \draw (74.178384, 7.759259) node[below] {$150$};
        \draw (-5, 49.500000) node[rotate=90.0, anchor=base]
              {Expanded simple Pachner moves};
        \draw (10.8, 11.000000) node[left] {$200$};
        \draw (10.8, 26.400000) node[left] {$300$};
        \draw (10.8, 41.800000) node[left] {$400$};
        \draw (10.8, 57.200000) node[left] {$500$};
        \draw (10.8, 72.600000) node[left] {$600$};
        \draw (10.8, 88.000000) node[left] {$700$};
        \begin{scope}[shift={(-3.10830441, -19.80000000)},
                     xscale=0.51524459, yscale=0.15400000]
        \end{scope}
      \end{tikzoverlay}
    };
  \end{tikzpicture}
  \caption{Data on the 100 knot exteriors from Section~\ref{sec: zero}.}
  \label{fig: zero}
\end{figure}


\section{Conclusion and open questions}
\label{sec: future}

To assess the practical effectiveness of our algorithm for
\FindDiagram, it is worth considering how large a link diagram can be
used as input for a subsequent computation.  Many key link invariants,
such as the Alexander polynomial and the Seifert genus, can be
computed directly (and at least as efficiently) from a triangulation
of the link exterior as from the diagram, so we focus on those that
require a diagram to compute.  The Jones polynomial is among the
simpler such ``truly diagrammatic'' invariants, and it is \#P-hard
\cite{JaegerVertiganWelsh90} to compute, though it is fixed-parameter
tractable in the tree-width of the diagram \cite{Makowsky2005}.  Even
the very best implementations for computing the Jones polynomial are
unable to handle most diagrams with more than 200 crossings
\cite{KnotJob}.  More refined invariants, such as knot Floer homology
\cite{OzsvathSzabo2019, HFKCalc} and Khovanov homology
\cite{Bar-Natan2007, Schutz2021, KnotJob}, are typically impractical
above 50-100 crossings, or even less depending on the precise variant.

As Figure~\ref{fig: zero} demonstrates, our implementation easily
finds link diagrams which are too big to allow computation of such
diagrammatic invariants.  While these diagrams presumably do not
minimize the number of crossings, we expect they are close enough that
the point still stands.  Alternatively, recall the diagram in
Figure~\ref{fig: cong 1} produced by our algorithm has 294 crossings,
and, by volume considerations, any diagram of this link must have at
least 66 crossings, and that alone would push the limits of many
diagrammatic calculations.

Given the practical effectiveness of our implementation of
\FindDiagram, we have incorporated it as a standard feature of SnapPy
\cite{SnapPy}, so that it can be widely used.  We conclude with
several open questions.

\begin{enumerate}
\item To what extent can the mean running time of $O(1.07^n)$ be
  reduced?  While Theorem~\ref{thm: fib} shows that the worst case
  running time must be at least exponential, it is not implausible
  that the mean running time is polynomial in the size of the
  \emph{output}.  The key issue is that the number of arcs in the base
  triangulation $\cT_0$ is currently exponential in the size of both
  the input \emph{and} the output, compare Figure~\ref{fig: num arcs}.

  In our current implementation, as the size of the input exceeds 40
  tetrahedra, the computation time becomes dominated by the final
  diagram simplification step.  This is despite the very favorable
  $O(n^{1.5})$ performance of our diagram simplification algorithm
  (Figure~\ref{fig: simp in action}). This again suggests we need to
  simplify the barycentric link more during the Pachner move steps.

\item To reduce the number of arcs when applying Pachner moves, one
  could consider additional local PL simplification moves, or try the
  current moves in larger balls in $\cT$ made up of several
  tetrahedra. We tried using the straighten simplification method
  inside the octahedron formed during the $4\to4$ move, but, in
  the examples we tested, this had the surprising effect of increasing
  the number of arcs at the final stage of the algorithm.  We also
  tried adding more complicated PL simplifications moves beyond
  straighten and push, but these had minimal effect.

\item It would be interesting to see if knot energy minimization, see
  \cite{Ohara91} and \cite{Simon94}, provides a practical way to
  simplify configurations of arcs within tetrahedra or to simplify the
  final link in $\R^3$ before projecting.

\item Our algorithm provides a new way to move in the space of
  diagrams for a given link: take a link diagram, produce a
  triangulation of the exterior, apply Pachner moves to modify the
  triangulation, then apply our algorithm to this new triangulation to
  obtain a new diagram of the link. It would be interesting to see if
  this approach effectively finds simple diagrams of links, or allows
  us to produce distinct simple diagrams that require many
  Reidemeister moves (or many additional crossings) to go between.

\item Is it possible to find diagrams for the remaining 10 congruence
  link exteriors discussed in Section~\ref{sec: cong}?

\end{enumerate}

\bibliography{exterior}
\end{document}